\newcommand{\ra}[1]{\renewcommand{\arraystretch}{#1}}
\numberwithin{equation}{section}
\numberwithin{figure}{section}
\theoremstyle{plain}
\newtheorem{thm}{\protect\theoremname}
  \theoremstyle{plain}
  \newtheorem{lem}[thm]{\protect\lemmaname}
  \newtheorem{rem}{Remark}
    \newtheorem{prop}{Proposition}
        \newtheorem{assumption}{Assumption}
  \providecommand{\lemmaname}{Lemma}
  \providecommand{\examplename}{Example}
\providecommand{\theoremname}{Theorem}
\begin{document}
        
\begin{frontmatter}
        \title{   \sc two-sample testing in non-sparse \\ high-dimensional linear models}
        \runtitle{High-dimensional two-sample testing without sparsity}

        \begin{aug}
                \author{\fnms{Yinchu} \snm{Zhu}\ead[label=e1]{yinchu.zhu@rady.ucsd.edu}},
                \and
                \author{\fnms{Jelena} \snm{Bradic}
                        \ead[label=e2]{jbradic@math.ucsd.edu}}

                \runauthor{Zhu and Bradic}
                
                \affiliation{University of California, San Diego}
                
                \address{Rady School of Management,\\ University of California, San Diego\\
                                        La Jolla, California 92093,\\
                                        USA\\
                        \printead{e1}}
                
                \address{Department of Mathematics, \\ University of California, San Diego\\
                        La Jolla, California 92093,\\
                        USA\\
                        \printead{e2}}
        \end{aug}
        
        \begin{abstract}
  \ \  In analyzing high-dimensional    models, sparsity  of the model parameter is  a common but often undesirable assumption. Though different methods have been proposed for   hypothesis testing under sparsity, no systematic theory exists for inference methods that are robust to failure of the sparsity assumption. 
                In this paper, we study the following two-sample testing problem: given two samples generated by two high-dimensional linear models, we aim to test whether the regression coefficients of the two linear models are identical.
                We
                propose a   framework named TIERS (short for TestIng Equality of Regression Slopes), which solves the two-sample testing problem without  making any
                assumptions on the sparsity of the regression parameters.
                TIERS builds a new model by convolving the two samples in such a way that the original hypothesis translates into a new  moment condition. 
                A self-normalization construction is then developed to form a moment test.
                We provide rigorous theory for the developed framework.
                Under very weak conditions of the feature covariance, we show that the   accuracy of the proposed test in controlling  Type I errors  is robust both to the lack of sparsity in the features and to the heavy tails in the error distribution, even when the sample size is much smaller than the feature dimension. Moreover, we discuss minimax optimality and efficiency properties of the proposed test. 
                Simulation analysis demonstrates excellent finite-sample performance of our test. In deriving the test, we also develop tools that are of independent interest. The test is built upon a novel estimator, called Auto-aDaptive Dantzig Selector (ADDS), which not only automatically chooses an appropriate scale (variance) of the error term but also incorporates prior information. 
                To effectively approximate the critical value of the test statistic, we develop a  novel  high-dimensional plug-in approach that complements the recent advances in Gaussian approximation theory.
        \end{abstract}

\end{frontmatter}
\section{Introduction}\label{sec: intro}

High-dimensional data are increasingly encountered in
many applications of statistics and most prominently in biological and financial research.  A common feature of the statistical models used to study high-dimensional data is that while
the dimension $p$  of the model parameter is high, the sample size $n$ is relatively small. This is the so-called High-Dimensional Low-Sample Setting (HDLSS) where $p/n \to \infty$ as $n \to \infty$. For such models, a common underlying   theoretical assumption  is that the effective dimensionality of the data is small, i.e., the number of non-zero components of the model parameter, denoted by $s$, is either fixed or grows slowly as   $s/n \to 0$ with $n\to \infty$.
Hypothesis testing in such HDLSS settings has recently gained a great deal of attention (see \cite{dezeure2015} for a review). The  problem is fundamentally difficult to solve due to the bias propagation induced by the regularized estimators (\cite{zhang2014confidence}), all of which are specifically designed to  resolve the curse-of-dimensionality, $p \gg n$, phenomenon. Bias propagation is apparent across models and across  regularizations (\cite{zhang2008,FanLv11}).  

However, since consistent estimation without sparsity (i.e., $s\gg n$) has not been successfully resolved, hypothesis testing in such models has not been addressed until now.  Yet, many  scientific areas  do not support the sparsity assumption and thus require development of new inferential methods that are robust to non-sparsity.  A natural question is whether or not we can design effective method for hypothesis testing that allows for such non-sparse high-dimensional models.
We
 answer this question in the context of two-sample simultaneous tests of equality of parameters of the 
 regression models.
Below we present a couple of examples that highlight the importance and applications of the problem we consider.

 (a)  (Dense Differential Regressions)  
In many situations,   two natural subgroups in the data occur;
for example, 
suppose that treatments $1$ and
$2$ are given to $n_1$ and $n_2$ patients, respectively, and that the $j$-th subject who
receives treatment $i$ ($i = 1 \mbox { or } 2$) has response $y_{ij}$ to treatment at  the level $x_{ij}$,
where the expected value of $y_{ij}$ is  $ x_{ij}^\top \beta_i$ for $x_{ij}\in \mathbb{R}^p$ and $\beta_i \in \mathbb{R}^p$. 
 Specifically, we intend to test the null
hypotheses that  
$$H_0: \beta_1 = \beta_2.$$
 The effects of the treatment are typically collected in the vectors $x_{ij}$.
Such effects are rarely sparse as 
therapies for highly diverse illnesses  largely   benefit from complicated treatments, for example,  radio-chemotherapy (\cite{Allan25092001}).  In this case,
treatment affects a large number of  levels of the molecular composition
of the cell  and many malfunctions have been
shown to relate to cancer-like behavior.
% Understanding  whether the   effect  of a treatment  between the two groups of patients  differs  is one of the questions in the focus of a large number of current clinical trials. 
Hence, in such setting, it is pertinent to consider vector $x_{ij}$ that measures   the  treatment  on the genetic cell or molecular level, leading to a  high-dimensional  feature vector with $p \gg n$ that is in principle   not sparse. 

(b)  (Dense Differential Networks)
Hypotheses concerning differences
in molecular influences or biological network structure using high-throughput data have become prevalent in modern FMRI studies. Namely, one is interested in discovering ``brain connectivity networks'' and comparing two of such networks between subgroups of population.
Here, one is interested in discovering the difference between two populations modeled by Gaussian graphical models. Significance testing for network differences is a challenging statistical problem, involving high-dimensional estimation and comparison of non-nested hypotheses. 
In particular, one can consider $p$-dimensional
  Gaussian random vectors  $Z_1 \sim \mathcal{N}(0, \Sigma_1)$
 and $Z_2 \sim \mathcal{N}(0, \Sigma_2)$ .
  The conditional independence graphs are characterized
by the non-zero entries of the precision matrices $\Sigma_1^{-1}$  and $\Sigma_2^{-1}$ (\cite{meinshausen2006high}). Given an
$n_1$-sample of $Z_1$
 and an $n_2$-sample of $Z_2$ with $n_1,n_2 \ll p$, the objective is to test
 \[
 H_0 : \Sigma_1^{-1} = \Sigma_2^{-1}
 \]
 without assuming any sparse structure on the precision matrices.

\subsection{This paper}

In this paper we consider the following two-sample, linear regression models
\begin{equation}
y_{A}=x_{A}^{\top}\beta_{A}+u_{A}\label{eq: original model A}
\end{equation}
and 
\begin{equation}
y_{B}=x_{B}^{\top}\beta_{B}+u_{B}\label{eq: original model B}
\end{equation}
with the unknown parameters of interest, $\beta_A \in \mathbb{R}^p$ and $\beta_B \in \mathbb{R}^p$.
For simplicity of presentation, we consider 
 Gaussian
random design, i.e.  $p$-dimensional design vectors $x_{A}$ and \(x_{B\ }\) follow normal distribution $\mathcal{N} (0, \Sigma_A)$
and  $\mathcal{N} (0, \Sigma_B) $
with unknown covariance matrices $\Sigma _A$ and $\Sigma _B$. Moreover, the noise components $u_A,u_B$ are mean zero, independent from the design and  have a  distribution with unknown standard deviations $\sigma_{u,A}$ and $\sigma_{u,B}$.
  In this formal setting, our objective is to test whether  model \eqref{eq: original model A} is the same as the model \eqref{eq: original model B}, i.e., to  develop a test for the hypothesis  of our  interest
\begin{equation}
H_{0}:\ \beta_{A}=\beta_{B}
\label{eq: null hypo}
\end{equation}
with $\beta_{A},\beta_{B}\in\mathbb{R}^{p}$ and $p \gg n$.
We will work with  two independent samples $\{(x_{A,i},y_{A,i})\}_{i=1}^{n}$
and $\{(x_{B,i},y_{B,i})\}_{i=1}^{n}$ of size $n$ but similar constructions can be exploited for the two samples of unequal sizes. 
 
 Here, we propose a hypothesis test for  \eqref{eq: null hypo} that provides a valid error control without making any assumptions about the sparsity of the two model parameters.  Although testing in high-dimensions has gained a lot of attention recently,   the procedure proposed is, to the best of our knowledge, the first  to possess all of the following properties.
 \begin{itemize}
  \item[(a)] 
The proposed test, named TestIng Equality of Regression Slopes ({TIERS} from here on), 
is  valid regardless of the assumption of sparsity of the model parameters $\beta_A$ and $\beta_B$. Type I error converges to the nominal level $\alpha$ even if $p\gg n$ and $s/p\rightarrow c\in[0,1]$. 
 
 \item[(b)] TIERS remains   valid   under any  distribution of the model errors
 $u_A, u_B$ even in the high-dimensional case.
In particular, TIERS is robust to heavy-tailed distribution  in the errors $u_A$ and $u_B$, such as the Cauchy distribution.  
 \item[(c)] Under weak regularity conditions, TIERS is nearly efficient compared to an oracle testing procedure and enjoys minimax optimality.
  %These regularity conditions are satisfied if the model parameters $\beta_A$ and $\beta_B$ are sparse; in fact, the regularity conditions  also hold in some cases with non-sparse parameters. For example,
   Whenever, $\Sigma_A=\Sigma_B$,  TIERS achieves the aforementioned efficiency and optimality properties regardless of the sparsity in  $\beta_A$ and $\beta_B$.

 \end{itemize}
 
 The approach also extends naturally to  groups of  regressions and can provide Type I and Type II errors using only convex optimization or linear programming.  Additionally, the test can be generalized to the case where one considers nested models. Likewise, tests of the null hypothesis $H_{0,j,k}$:  $\beta_{A,j} = \beta_{B,k}$ can be performed.

 \subsection{Previous work}

Hypothesis testing in high-dimensional regression is  extremely challenging. Most estimators cannot guard against inclusion of noise variables unless  restrictive and unverifiable assumptions (for example,  irrepresentable condition (\cite{Zhao:2006}) or minimum signal strength (\cite{fan2004})  are made.  Early work on  p-values  includes methods  based on multiplicity correction that are conservative  in their Type I error control (\cite{MMB09,buhlmann2013statistical}), bootstrap methods that  control false discovery rate  (\cite{MMB09,mandozzi2015hierarchical}) and  inference methods guaranteeing asymptotically exact tests under  the  irrepresentable condition (\cite{fan2001variable}).
More recently, 
there have been  series of important studies  that  design  asymptotically valid tests while relaxing the irrepresentable condition. Pioneering work of 
\cite{zhang2014confidence} develops de-biasing technique and shows  that low-dimensional projections are an efficient way of constructing  confidence intervals.
In a major generalization, \cite{van2014asymptotically}  consider  
a range of models which
includes the linear models and the generalized linear models and obtain  valid inference methods when
$s \log (p) = o(n^{1/2})$. The literature has also seen work similar in spirit   for Gaussian graphical models (\cite{ren2015asymptotic}). \cite{2015arXiv150802757J} compute optimal sample size and minimax optimality of a modified de-biasing method  of \cite{javanmard2014confidence}, which  allows for non-sparse precision matrix in the design. \cite{Guang16}  and \cite{Dezeure16} evaluate approximating the overall level
of significance for simultaneous testing  of $p$ model parameters.  They demonstrate that
the multiplier bootstrap of  \cite{chernozhukov2013gaussian} can accurately approximate the overall level of significance whenever the true model is sparse enough.   

 However,   the above mentioned work  requires   model sparsity  i.e. $s /n \to 0$ as $n \to \infty$ and is hence too restrictive to be used in  many scientific data applications (social or biological networks for example).
 Moreover, despite the above progress in one-sample testing, two-sample hypothesis testing has not been addressed much in the existing literature. 
 In the context
of high-dimensional two-sample comparison of means, \cite{bai1996effect,chen2010,NIPS2011_4260,Cai2014}  have
introduced global tests to compare the means of two high-dimensional Gaussian
vectors with unknown variance with and without direct sparsity in the model. Recently  \cite{Cai2013a} and \cite{li2012} develop two-sample tests for
covariance matrices of two high-dimensional vectors specifically extending the Hotelling's $T^2$ statistics to high-dimensional setting, while \cite{2016arXiv160600252Z} develop new spectral test that allows for block sparsity patterns. \cite{charbonnier2015} address the more general heterogeneity test but heavily relies on the direct sparsity in the model parameters.
The latest effort in this direction is the work of \cite{stadler2016two} where the authors extend screen-and-clean procedure of \cite{wasserman2009} to the two-sample setting. The authors provide  asymptotic  Type I error guarantees that are only valid under sparse models. Notice that the last two methods are based on model selection and hence do not apply to dense models because, if $s=p$, perfect model selection simply means including all the features ($p$-dimensional with $p\gg n$).

\subsection{Challenges of two-sample testing in non-sparse models}

  The  central problem  of two-sample testing with $p \geq n$ is in finding an adequate  measure of comparison of the estimators between the two samples. In light of  the great success in designing tests for one sample, it becomes natural to expect that the developed methods trivially apply to the  two sample case. However, the situation is far from trivial when the sparsity assumption fails. Below we illustrate this problem via a specific example of dense and high-dimensional linear model. 

In particular, we show that a naive extension of the powerful de-biasing procedure to the two-sample problem, fails whenever the model is dense enough.  To that end, we consider a simple Gaussian design with $x_{A},x_{B}\sim \mathcal{N}(0,I_{p})$ and assume that the errors are independent with the standard Gaussian distribution, $u_{A},u_{B}\sim \mathcal{N}(0,1)$. The true parameters have $p$ non-zero elements (dense) and take the following  form 
\begin{equation}
\beta_{A}=\beta_{B}=\mathbf{1}_{p}c/\sqrt{n},
\end{equation}
 where $\mathbf{1}_{p}$
        denotes a $p$-dimensional vector of ones and $c\in\mathbb{R}$ is
        a constant. 

For simplicity, we consider the ``oracle'' de-biasing (de-sparsifying) estimator, which is the estimator defined in Equation (5) of  \cite{van2014asymptotically} except that
the node-wise Lasso estimators for the precision matrices ($\Sigma_A^{-1}$ and $\Sigma_B^{-1}$) are replaced by their true values, $I_p$. We compute this ``oracle'' de-biasing estimators for both samples 
        $$\tilde{\beta}_{k}=\hat{\beta}_{k}+n^{-1}\sum^{n } _{i=1}x_{k,i}(y_{k,i}-x_{k,i}^\top\hat{\beta}_{k})$$
        for $k\in\{A,B\}$,  where  the initial estimator  $\hat{\beta}_{k}$  is the scaled Lasso estimator
          (\cite{sun2012scaled})  defined as 
        \begin{equation}
        (\hat{\beta}_{k},\hat{\sigma}_{k})=\underset{\beta,\sigma}{\arg\min} \left\{ \frac{\sum_{i=1}^{n}(y_{k,i}-x_{k,i}^\top\hat{\beta}_{k})^{2}}{2n\sigma}+\frac{\sigma}{2}+\lambda\|\beta\|_{1} \right\} .\label{eq: scaled lasso}
        \end{equation}
        Here,  $\lambda$ is set to be $\lambda=\lambda_{0}\sqrt{n^{-1}\log p}$, where  $\lambda_{0}>\sqrt{2}$  is a constant. 
        %We demonstrate the naively applying results from \cite{van2014asymptotically} in this example with dense parameters leads to invalid tests. 
As discussed after Theorem 2.2 of \cite{van2014asymptotically}, de-biasing principles suggest a ``naive'' generalization of the one-sample T-test  with a test statistic of the form
%For example, if we were to use Theorem 2.2 therein, then we would have concluded the asymptotic normality of $\sqrt{n}(\tilde{\beta}_{k,j}-\beta_{k,j})$  for $1 \leq j \leq p$ and $k\in\{A,B\}$.  This would lead to the following test statistic for $H_0$   
        \[
        M_n = \max_{1\leq j\leq p}\sqrt{n}|\tilde{\beta}_{A,j}-\tilde{\beta}_{B,j}|/\sqrt{\hat{\Sigma}_{A,j,j}+\hat{\Sigma}_{B,j,j}}.
        \]
       A test of  nominal size $\alpha\in(0,1)$, would reject $H_0$ whenever $M_n > m_\alpha$ for the critical value $m_\alpha$ defined as 
        $
         m_\alpha = F^{-1}(1-\alpha,0,\hat{\Sigma}_{A},\hat{\Sigma}_{B})$,
         where  $\hat{\Sigma}_{k}=n^{-1}\sum_{i=1}^{n}x_{k,i}x_{k,i}^{\top}$ and $F^{-1}(\cdot,c,\Sigma_{A},\Sigma_{B})$ is the inverse with
        respect to the first argument of $F(x,c,\Sigma_{A},\Sigma_{B})$ with
        $$F(x,c,\Sigma_{A},\Sigma_{B})=\mathbb{P}\left(\max_{1\leq j\leq p}|\zeta_{j}|/\sqrt{\Sigma_{A,j,j}+\Sigma_{B,j,j}}\leq x\right)$$
        and a $p$-dimensional Gaussian vector  $\zeta$ with mean $(\Sigma_{A}-\Sigma_{B})\mathbf{1}_{p}c$ and covariance  $\Sigma_{A}+\Sigma_{B}$.

\begin{lem}
        \label{lem: naive example}Let the null hypothesis $H_{0}$ (\ref{eq: null hypo}) hold. Consider de-biased estimators $\tilde{\beta}_{k}$, $k\in \{A,B \}$ with 
        $\lambda=\lambda_{0}\sqrt{n^{-1}\log p}\ {\rm for}\ \lambda_{0}>\sqrt{2}$. Then, in the above  setting, as $n\to \infty$ 
        \[
        \mathbb{P}\left( M_n > m_\alpha \right)=M(\alpha,c)+o(1),
        \]
        where $M(\alpha,c)=\mathbb{E}\left[F\left(F^{-1}(1-\alpha,0,\hat{\Sigma}_{A},\hat{\Sigma}_{B}),c,\hat{\Sigma}_{A},\hat{\Sigma}_{B}\right)\right]$. \end{lem}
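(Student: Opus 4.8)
The plan is to show that in this dense but per-coordinate-weak regime the scaled Lasso collapses to the zero vector, so that the subsequent de-biasing step leaves behind an irreducible random bias that drives the whole phenomenon. First I would record the scaled-Lasso KKT characterization: $\hat\beta_k=0$ exactly when $\|n^{-1}\sum_{i=1}^n x_{k,i}y_{k,i}\|_\infty \le \lambda\hat\sigma_k$ with $\hat\sigma_k=(n^{-1}\sum_{i=1}^n y_{k,i}^2)^{1/2}$. Using $\beta_k=\mathbf 1_p c/\sqrt n$ and $\Sigma_k=I_p$, I decompose $n^{-1}\sum_i x_{k,i}y_{k,i}=\hat\Sigma_k\beta_k+n^{-1}\sum_i x_{k,i}u_{k,i}$. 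The coordinatewise noise maximum is $(1+o(1))\sqrt{2n^{-1}\log p}$, the coordinatewise signal maximum $\max_j|(\hat\Sigma_k\beta_k)_j|$ is $(1+o(1))\,c\,n^{-1}\sqrt{2p\log p}$, while $\hat\sigma_k^2=(1+o(1))(c^2p/n+1)$ gives the threshold $\lambda\hat\sigma_k=(1+o(1))\lambda_0 c\,n^{-1}\sqrt{p\log p}$ when $p/n\to\infty$. Since the signal-to-threshold ratio is $\sqrt2/\lambda_0<1$ precisely because $\lambda_0>\sqrt2$, and the threshold dominates the noise maximum by a factor of order $c\sqrt{p/n}$, both sides of the KKT inequality can be compared to conclude $\hat\beta_A=\hat\beta_B=0$ with probability tending to one. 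This is the step where density of $\beta$, the scale $c/\sqrt n$, and the exact margin $\lambda_0>\sqrt2$ all enter.

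On the event $\{\hat\beta_A=\hat\beta_B=0\}$ the oracle de-biased estimator reduces to $\tilde\beta_k=\hat\Sigma_k\beta_k+n^{-1}\sum_i x_{k,i}u_{k,i}$, so under $H_0$ (where $\beta_A=\beta_B=\mathbf 1_p c/\sqrt n$) I obtain $\sqrt n(\tilde\beta_A-\tilde\beta_B)=c(\hat\Sigma_A-\hat\Sigma_B)\mathbf 1_p+W$, with $W=n^{-1/2}\sum_i (x_{A,i}u_{A,i}-x_{B,i}u_{B,i})$. Conditioning on the two Gram matrices, $W$ is a centered sum of independent vectors whose conditional covariance is $\hat\Sigma_A+\hat\Sigma_B$, using $\sigma_{u,A}=\sigma_{u,B}=1$ and independence of the two samples. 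I would then apply a high-dimensional central limit theorem of Chernozhukov--Chetverikov--Kato type, conditionally on the Gram matrices and over max-type functionals, to show that the conditional law of $\sqrt n(\tilde\beta_A-\tilde\beta_B)$ is within $o_P(1)$ (in the relevant Kolmogorov distance for the self-normalized maximum) of that of the Gaussian vector $\zeta\sim\mathcal N(c(\hat\Sigma_A-\hat\Sigma_B)\mathbf 1_p,\ \hat\Sigma_A+\hat\Sigma_B)$. Consequently the conditional distribution function of $M_n$ is governed by $F(\cdot,c,\hat\Sigma_A,\hat\Sigma_B)$.

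Finally I substitute the data-dependent critical value $m_\alpha=F^{-1}(1-\alpha,0,\hat\Sigma_A,\hat\Sigma_B)$. Since $m_\alpha$ is $\hat\Sigma$-measurable, I can evaluate the conditional limit at $m_\alpha$; an anti-concentration bound for the maximum of the Gaussian field ensures $F(\cdot,c,\hat\Sigma_A,\hat\Sigma_B)$ is asymptotically continuous at $m_\alpha$, so evaluating at the random threshold preserves the $o_P(1)$ remainder. This yields, conditionally on the designs, that the rejection event $\{M_n>m_\alpha\}$ has probability converging to $F(m_\alpha,c,\hat\Sigma_A,\hat\Sigma_B)$. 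Because the integrand is bounded by one, dominated convergence lets me take expectations over the randomness of $\hat\Sigma_A,\hat\Sigma_B$, giving $\mathbb P(M_n>m_\alpha)=\mathbb E[F(m_\alpha,c,\hat\Sigma_A,\hat\Sigma_B)]+o(1)=M(\alpha,c)+o(1)$, as claimed.

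The hard part will be the conditional Gaussian approximation: one must verify the regularity conditions uniformly over the random Gram matrices, which requires bounding conditional moments of the products $x_{k,i}u_{k,i}$, controlling the extreme diagonal entries $\hat\Sigma_{k,jj}$ that appear in the self-normalization, and handling the non-vanishing random mean shift $c(\hat\Sigma_A-\hat\Sigma_B)\mathbf 1_p$, whose coordinates are of order $\sqrt{\log p}$ and therefore interact nontrivially with the maximum. A secondary but essential difficulty is securing the event $\{\hat\beta_k=0\}$ with enough uniformity: the strict margin $\lambda_0>\sqrt2$ must beat the signal maximum $c\,n^{-1}\sqrt{2p\log p}$ with probability tending to one, and this single comparison is exactly what encodes the failure of naive de-biasing in the dense regime.
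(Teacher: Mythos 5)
Your proposal follows the same skeleton as the paper's proof: use the scaled-Lasso KKT conditions to show that the all-zero vector is a solution with probability tending to one, conclude that on this event $\sqrt{n}(\tilde{\beta}_{A}-\tilde{\beta}_{B})$ equals the explicit vector $\zeta=c(\hat{\Sigma}_{A}-\hat{\Sigma}_{B})\mathbf{1}_{p}+n^{-1/2}\sum_{i=1}^{n}(x_{A,i}u_{A,i}-x_{B,i}u_{B,i})$, and then pass from the conditional (on the designs) rejection probability to the unconditional one by integration. Your identification of the KKT event and of the role of the margin $\lambda_{0}>\sqrt{2}$ is correct.

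The main place you diverge is the distributional step, and there your assessment of where the difficulty lies is mistaken. In this example the errors are exactly $\mathcal{N}(0,1)$ and independent of the designs, so conditional on $(X_{A},X_{B})$ the vector $\zeta$ is \emph{exactly} $\mathcal{N}\bigl(c(\hat{\Sigma}_{A}-\hat{\Sigma}_{B})\mathbf{1}_{p},\,\hat{\Sigma}_{A}+\hat{\Sigma}_{B}\bigr)$; hence $F(\cdot,c,\hat{\Sigma}_{A},\hat{\Sigma}_{B})$ is the exact conditional distribution function of $M_{n}$ on the zero-solution event, $m_{\alpha}$ is design-measurable, and no central limit theorem and no anti-concentration argument are needed at all --- the approximation error you propose to control is identically zero. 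This is precisely the paper's shortcut. If you executed the Chernozhukov--Chetverikov--Kato route literally, it would be vacuous at best, and verifying its moment conditions (with $B_{n}\asymp\max_{i,j}|x_{k,i,j}|\asymp\sqrt{\log(pn)}$) would impose polylogarithmic rate restrictions relating $p$ and $n$ that the lemma does not assume. A second, smaller difference: you bound the signal term $\hat{\Sigma}_{k}\beta_{k}$ and the noise term separately with sharp constants, which requires $c^{2}p/n\rightarrow\infty$ for your threshold asymptotics $\lambda\hat{\sigma}_{k}\approx\lambda_{0}cn^{-1}\sqrt{p\log p}$ (fine in the HDLSS regime, but an extra requirement); the paper instead writes, for each column $l$, $Y_{k}=cn^{-1/2}X_{k,l}+\varepsilon_{l}$ with $\varepsilon_{l}$ Gaussian and independent of $X_{k,l}$, absorbing the dense interference into the noise, so that the comparison $\lambda_{1}\sigma_{+}+2c/\sqrt{\log p}\leq\lambda_{0}\sigma_{*}(1-o(1))$ for $\sqrt{2}<\lambda_{1}<\lambda_{0}$ needs only $p\rightarrow\infty$. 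Finally, your last display gives the rejection probability as $\mathbb{E}[F(m_{\alpha},c,\hat{\Sigma}_{A},\hat{\Sigma}_{B})]$ rather than its complement; this sign convention is inherited from the lemma's own statement of $M(\alpha,c)$ (and the paper's proof is equally terse on this point), so I do not count it against you.
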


Lemma \ref{lem: naive example} establishes that  under $H_{0}$,   the ``naive''
test $M_n > m_\alpha$  has power approaching $M(\alpha,c)$. Notice that
for the naive test to be valid, we need $M(\alpha,c)\leq\alpha$ for all $ c\in\mathbb{R}$. In Figure \ref{fig: naive test size}, we plot the function $M(0.05,\cdot) $ for several combinations of $(n,p)$.   As we can see, $M(0.05,c)$
can be substantially larger than 0.05 and even reach one for $c=0.002$. In other words, the probability of rejecting
a true hypothesis can approach one when the model is not sparse.  Hence, naively applying existing methods does not solve the two-sample testing problem in dense high-dimensional  models.

\begin{figure}[h]
\begin{centering}
\caption{\label{fig: naive test size}\small$M(0.05,c)$: the rejection probability
of the naive test of nominal size 5\% under $H_{0}$ (\ref{eq: null hypo}).The blue, red and green curves  
correspond to $(n,p)=(100,500)$, $(200,500)$ and $(100,300)$, respectively. }

\includegraphics[scale=0.30]{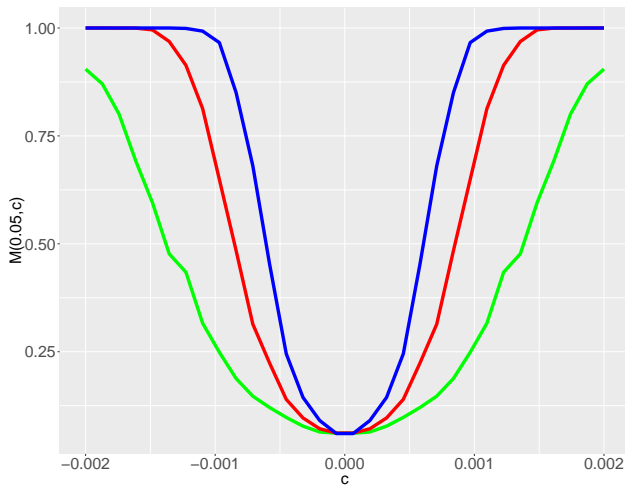}
\par\end{centering}
\end{figure}

\subsection{Notations and Organization of the paper}
Throughout this paper, $^{\top}$ denotes the matrix transpose and
$I_{p}$ denotes the $p\times p$ identity matrix. The (multivariate)
Gaussian distribution with mean (vector) $\mu$ and variance (matrix)
$\Sigma$ is denoted by $\mathcal{N}(\mu,\Sigma)$. The cumulative distribution of the standard normal distribution is denoted by $\Phi(\cdot)$. For a vector $v\in\mathbb{R}^{k}$, we define its $\ell_{q}$-norm
as follows: $\|v\|_{q}=(\sum_{i=1}^{k}|v_{i}|^{q})^{1/q}$ for $q\in(0,\infty$),
$\|v\|_{\infty}=\max_{1\leq i\leq k}|v_{i}|$ and $\|v\|_{0}=\sum_{i=1}^{k}\mathbf{1}\{v_{i}=0\}$,
where $\mathbf{1}\{\}$ denotes the indicator function. For matrix $A$, its $(i,j)$ entry is denoted by $A_{i,j}$, its $i$-th row by $a_i$ and its $j$-th column by $A_j$. We define $\|A\|_\infty=\max|A_{i,j}|$, where the maximum is taken over all $(i,j)$ indices. We use $\sigma_{\max}(\cdot)$ and  $\sigma_{\min}(\cdot)$ to denote the maximal and minimal singular values, respectively.  For two sequences
$a_{n},b_{n}>0$, we use $a_{n}\asymp b_{n}$ to denote that there
exist positive constants $C_{1},C_{2}>0$ such that $\forall n$,
$a_{n}\leq C_{1}b_{n}$ and $b_{n}\leq C_{2}a_{n}$. For two real numbers $a$ and $b$, let $a\vee b$ and $a\wedge b$ denote $\max\{a,b\}$ and $\min\{a,b\}$, respectively. 
We use ``s.t.'' as the abbreviation for ``subject to''.
For two random quantities $X$ and $Y$ (scalars, vectors and matrices), $X\perp Y$ denotes the  independence of  $X$ and $Y$.

The rest of the paper is structured as follows. We develop the new methodology and the testing procedure in Section \ref{sec: methodology}, where a novel ADDS  estimator   is also introduced. In Section \ref{sec: HD approx}, we develop a general theory of approximating the distribution of a large of class of test statistics with functions of Gaussian distributions. In Section \ref{sec: theory}, we derive theoretical properties of our test, such as size and power guarantees and efficiencies, and extend the results to non-Gaussian designs. In Section \ref{sec: MC simulations},  Monte Carlo simulations are employed to assess the finite-sample performance of the proposed test. The proofs for all of the theoretical results are contained in the appendix.

\section{Methodology}\label{sec: methodology}
  In this section we propose a new two-step methodology  for testing in non-sparse and  high-dimensional two-sample models. In the first step, we match
the variables from the two samples to obtain a convolved  sample, which satisfies a new model, referred to as the convolution regression model. This novel construction allows us to reformulate the hypothesis of interest \eqref{eq: null hypo} into testable moment conditions on the convolved  sample. In the second step, we construct a suitable moment test  by utilizing  plug-in principles and    self-normalization.
Therefore,  convolving the two-samples allows us to work directly with a specific moment constraint. This  substantially simplifies the problem and 
facilitates the theoretical analysis in non-sparse high-dimensional models.
In contrast, the traditional Wald or Score methods for  testing are no longer suitable
in the current context.

\subsection{Constructing testable moment conditions via convolution}

This section introduces the idea of constructing a convolution regression equation, which naturally generates  moment conditions that are equivalent to the original null hypothesis (\ref{eq: null hypo}).
 Throughout this section, our interest is on analyzing Gaussian designs, which
are common in  applications. Extensions to non-Gaussian designs are presented in Section \ref{sec: theory}.

We propose to   reformulate the models (\ref{eq: original model A}) and (\ref{eq: original model B}) into a new model,  in which $\beta_A-\beta_B$ appears as a regression coefficient, and  then use it to     derive a moment condition corresponding to the testing problem $H_0: \beta_A= \beta_B$. 
Instead of naively concatenating two samples, we consider convolving the variables from the two respective samples as follows. Define the convolved  response and  error 
  $y=y_{A}+y_{B}$ and   $u=u_{A}+u_{B}$ and the new design matrices  $w=x_{A}+x_{B }$ and as well as  $z=x_{A}-x_{B}$. With this notation at hand,  (\ref{eq: original model A}) and  (\ref{eq: original model B}) imply the following convolved  regression model:  
\begin{equation}\label{eq:model2}
y=w^{\top}\theta_{*}+z^{\top }\gamma_{*}+u,
\end{equation}
with unknown parameters 
$$\theta_{*}=(\beta_{A}+\beta_{B})/2 \qquad \mbox{and} \qquad \gamma_{*}=(\beta_{A}-\beta_{B})/2.$$

In the above convolved model, parameter $\gamma^*$ is of main interest as the null hypothesis $H_{0}$ in (\ref{eq: null hypo}) is equivalent
to 
$$H_0: \gamma_{*}=0.$$
 Due to the high-dimensionality
and potential lack of sparsity in $\theta_{*}$ and $\gamma_*$, we cannot simply
estimate $\theta_{*}$ and $\gamma_{*}$ and test $\gamma_{*}=0$.
In order to  construct a  number of testable moment restrictions, we use 
the 
conditioning information   between $w$ and $z$. Namely, we  introduce  the following
parameter 
$$\Pi_{*,j}=\left(\mathbb{E}\left[ ww^{\top} \right]\right)^{-1}\mathbb{E} \left[wz_{j} \right]\in\mathbb{R}^{p},$$
describing correlation between the column $z_j \in \mathbb{R}$ and the design matrix $w \in \mathbb{R}^{n \times p}$.
 In the case of Gaussian designs  parameter $\Pi_{*,j}$ also encodes their dependence structure. 
If we know $\Pi_{*,j}$ then we can decouple the   parameter of interest $\gamma_*$ from  the nuisance parameter $\theta_*$ in \eqref{eq:model2}.
In order to see that, observe that 
\begin{equation}
v_{j} =z_{j}-w^{\top}\Pi_{*,j},\label{eq: graphical model}
\end{equation}
defined for  $1 \leq j \leq p$ satisfies that $\mathbb{E} \left[ vw^{\top} \right]=0$, where $v=(v_{1},\cdots,v_{p})^{\top}\in \mathbb{R}^p$.
Additionally for Gaussian  design  we have $w\perp v$.
   However,  observe that  $v_1,\dots,v_p$ can be highly dependent.

The following lemma characterizes the unknown parameter  $\Pi_{*,j}$ and the covariance structure of the  vector $v$  of \eqref{eq: graphical model}, i.e., $\mathbb{E}\left[vv^{\top}\right]$. The proof is merely straight-forward computation and is thus omitted.
\begin{lem}\label{lem:1}
Under (\ref{eq: original model A}), (\ref{eq: original model B})
and (\ref{eq: graphical model}), we have $\Pi_{*}=(\Pi_{*,1},\cdots,\Pi_{*,p})$ with 
$$\Pi_*=(\Sigma_{A}+\Sigma_{B})^{-1}(\Sigma_{A}-\Sigma_{B})\in\mathbb{R}^{p\times p}$$
and $\mathbb{E} \left[ vv^{\top} \right]=4(\Sigma_{A}^{-1}+\Sigma_{B}^{-1})^{-1}$. 
\end{lem}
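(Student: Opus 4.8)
The plan is to reduce both claims to elementary second-moment computations followed by a single matrix identity. Since $x_{A}$ and $x_{B}$ are independent, mean-zero Gaussians with covariances $\Sigma_{A}$ and $\Sigma_{B}$, the cross-covariances $\mathbb{E}[x_{A}x_{B}^{\top}]$ and $\mathbb{E}[x_{B}x_{A}^{\top}]$ vanish. Writing $w=x_{A}+x_{B}$ and $z=x_{A}-x_{B}$, I would first expand the relevant second moments to obtain
\[
\mathbb{E}[ww^{\top}]=\Sigma_{A}+\Sigma_{B},\qquad \mathbb{E}[wz^{\top}]=\Sigma_{A}-\Sigma_{B},\qquad \mathbb{E}[zz^{\top}]=\Sigma_{A}+\Sigma_{B},
\]
where every cross term drops out by independence. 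Since by definition $\Pi_{*,j}=(\mathbb{E}[ww^{\top}])^{-1}\mathbb{E}[wz_{j}]$, and $\mathbb{E}[wz_{j}]$ is the $j$-th column of $\Sigma_{A}-\Sigma_{B}$, stacking these columns yields $\Pi_{*}=(\Sigma_{A}+\Sigma_{B})^{-1}(\Sigma_{A}-\Sigma_{B})$, which is the first claim. For brevity set $S=\Sigma_{A}+\Sigma_{B}$ and $D=\Sigma_{A}-\Sigma_{B}$, both symmetric, so that $\Pi_{*}=S^{-1}D$ and $\Pi_{*}^{\top}=DS^{-1}$.

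For the covariance of $v$, I would rewrite the defining relation $v_{j}=z_{j}-w^{\top}\Pi_{*,j}$ in matrix form as $v=z-\Pi_{*}^{\top}w$ and expand
\[
\mathbb{E}[vv^{\top}]=\mathbb{E}[zz^{\top}]-\mathbb{E}[zw^{\top}]\Pi_{*}-\Pi_{*}^{\top}\mathbb{E}[wz^{\top}]+\Pi_{*}^{\top}\mathbb{E}[ww^{\top}]\Pi_{*}.
\]
Substituting the moments above and using $\Pi_{*}^{\top}=DS^{-1}$, each of the last three terms equals $DS^{-1}D$, so the expression collapses to $\mathbb{E}[vv^{\top}]=S-DS^{-1}D$. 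It then remains only to verify the algebraic identity $S-DS^{-1}D=4(\Sigma_{A}^{-1}+\Sigma_{B}^{-1})^{-1}$.

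This last identity is the one nontrivial step, since $\Sigma_{A}$ and $\Sigma_{B}$ need not commute and a direct term-by-term manipulation of $S-DS^{-1}D$ is awkward. The clean route I would take is to recognize $S-DS^{-1}D$ as the Schur complement of the top-left block of
\[
\mathcal{M}=\begin{pmatrix} S & D \\ D & S \end{pmatrix}=\begin{pmatrix} \Sigma_{A}+\Sigma_{B} & \Sigma_{A}-\Sigma_{B} \\ \Sigma_{A}-\Sigma_{B} & \Sigma_{A}+\Sigma_{B} \end{pmatrix}.
\]
Conjugating by the orthogonal matrix $P=\tfrac{1}{\sqrt{2}}\begin{pmatrix} I & I \\ I & -I \end{pmatrix}$ block-diagonalizes $\mathcal{M}$: because $S+D=2\Sigma_{A}$ and $S-D=2\Sigma_{B}$, one gets $P^{\top}\mathcal{M}P=\mathrm{diag}(2\Sigma_{A},2\Sigma_{B})$. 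Inverting and rotating back, the bottom-right block of $\mathcal{M}^{-1}$ equals $\tfrac14(\Sigma_{A}^{-1}+\Sigma_{B}^{-1})$; but by the block-inverse formula this same block equals $(S-DS^{-1}D)^{-1}$, hence $S-DS^{-1}D=4(\Sigma_{A}^{-1}+\Sigma_{B}^{-1})^{-1}$, completing the proof. The main obstacle is thus confined to this identity, and the block-diagonalization trick sidesteps any commutativity assumption that a naive Woodbury-style expansion would implicitly demand.
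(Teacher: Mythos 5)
Your proof is correct, and since the paper explicitly omits the proof of this lemma as ``merely straight-forward computation,'' there is no authorial argument to compare against. Your second-moment calculations and the identification of $\mathbb{E}[vv^{\top}]$ with the Schur complement $S-DS^{-1}D$ are all right, and the block-diagonalization via $P=\tfrac{1}{\sqrt 2}\bigl(\begin{smallmatrix} I & I\\ I & -I\end{smallmatrix}\bigr)$ is a clean, commutativity-free way to establish $S-DS^{-1}D=4(\Sigma_A^{-1}+\Sigma_B^{-1})^{-1}$ (it is in fact the natural one, since $(w,z)$ is exactly $\sqrt2\,P$ applied to $(x_A,x_B)$, so $\mathcal{M}=2P\,\mathrm{diag}(\Sigma_A,\Sigma_B)P^{\top}$ and the residual covariance is the conditional covariance of $z$ given $w$). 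For the final identity there is also a one-line shortcut you could substitute: $S-DS^{-1}D=(S+D)S^{-1}(S-D)=4\Sigma_A(\Sigma_A+\Sigma_B)^{-1}\Sigma_B=4\bigl(\Sigma_B^{-1}(\Sigma_A+\Sigma_B)\Sigma_A^{-1}\bigr)^{-1}=4(\Sigma_A^{-1}+\Sigma_B^{-1})^{-1}$, which avoids the block matrix entirely.
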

Now we  illustrate how vector $v$ can be utilized to decouple $\gamma_*$ from $\theta_*$. 
First, observe that  
$$\mathbb{E} \left[ v(y-w^{\top}\theta_{*}) \right]=\mathbb{E}\left[ vz^{\top}\right]\gamma_{*}=\mathbb{E}\left[vv^{\top}\right]\gamma_{*}$$
where  equalities hold by (\ref{eq:model2}) and (\ref{eq: graphical model}), respectively. Second, we observe that by Lemma \ref{lem:1}, the matrix $E[vv^{\top}]$ is nonsingular as long as $\Sigma_A$ and $\Sigma_B$ are nonsingular.  Since $\gamma_*=(\beta_A-\beta_B)/2$, it follows that the original hypothesis   $H_{0} : \beta_A=\beta_B$
 holds if and only if $\mathbb{E}\left[ v(y-w^{\top}\theta_{*}) \right]=0$.
Therefore,  the two problems are equivalent and we can proceed to test the following  moment condition 
\begin{equation}
H_{0}:\ \mathbb{E} \left[ (z^{\top}-w^{\top}\Pi_{*})(y-w^{\top}\theta_{*}) \right]=0.\label{eq: moment condition}
\end{equation}

We will show that this restriction allows for the construction of a highly successful test while allowing $\theta_*$ to be fully dense vector with $p \geq n$.

\subsection{Testing the moment condition (\ref{eq: moment condition})}

Now we discuss the effective construction of the test. The moment condition
(\ref{eq: moment condition}) involves the unknown quantities $\Pi_{*}$
and $\theta_{*}$, which we replace with estimated counterparts. 
However, observe that the convolved  model does not have the same distributional or sparsity properties as the original linear models.
We
do not assume consistent estimation for $\theta_{*}$ and the validity
of our test is built upon consistent estimation of $\Pi_{*}$ only. 
We start by introducing some notations.

 Given the two samples  $\{(x_{A,i},y_{A,i})\}_{i=1}^{n}$
and $\{(x_{B,i},y_{B,i})\}_{i=1}^{n}$, we define a new response vector $Y=(y_{A,1}+y_{B,1},\cdots,y_{A,n}+y_{B,n})^\top\in\mathbb{R}^n$, and use a matrix 
$X_{A}=(x_{A,1},\cdots,x_{A,n})^{\top}\in\mathbb{R}^{n\times p}$, and $X_{B}=(x_{B,1},\cdots,x_{B,n})^{\top}\in\mathbb{R}^{n\times p}$.
Furthermore, we define two new design matrices, $W=X_{A}+X_{B}\in \mathbb{R}^{n\times p}$ and $Z=X_{A}-X_{B} \in \mathbb{R}^{n\times p}$. We denote by $u_i^\top$ the $i$-th row of $U=(Y_A-X_A\beta_A)+(Y_B-X_B\beta_B)$.
Finally, the \(j\)-th column of $X_A$, $X_B$ and $Z$ will be denoted by $X_{A,j}$, $X_{B,j}$ and $Z_j$, respectively.
\subsubsection{Auto-aDaptive Dantzig Selector (ADDS)} \label{sec:asds-intro}
We propose a novel estimator for $\theta_{*}$, which can be used
to construct the test statistic.  
Let  
\begin{equation}\label{eq: def DS path}
\begin{array}{cccc}
\hat{\theta}(\sigma)&=\underset{\theta\in\mathbb{R}^{p}}{\arg\min}\ & \|\theta\|_{1}     \\
& \ \ \  \ \ \  \ \mbox{ s.t. } & \Bigl\|n^{-1}(X_A + X_B)^{\top}(Y_A + Y_B-X_A\theta - X_B \theta) \Bigl \|_{\infty}&\leq\eta\sigma   
\end{array},
\end{equation}
  
where $\eta\asymp\sqrt{n^{-1}\log p}$ is a tuning parameter that
does not depend on $\sigma_{u}$.
 Moreover, let 
\begin{equation}\label{eq: auto-scaling DS}
\begin{array}{cccc} 
\tilde{\sigma}_{u}&=\underset{\sigma\geq0}{\arg\max}\  & \sigma\\
& \ \ \  \ \ \  \ \mbox{ s.t. } &  {\Bigl\|Y_A + Y_B- X_A \hat{\theta}(\sigma) - X_B \hat \theta (\sigma) \Bigl\|_{2}^{2} }&\geq n{\sigma^{2}}/{2}
\end{array}.
\end{equation}

Our estimator for $\theta_{*}$, referred to as the Auto-aDaptive Dantzig
Selector (ADDS), is defined as 
\begin{equation}
\widetilde{{\theta}}=\hat{\theta}(\tilde{\sigma}_{u})\label{eq: theta estimate}.
\end{equation}

An advantage of the proposed estimator is that it simultaneously estimates
$\sigma_{u}^{2}=\mathbb{E}[u^{2}]$ and $\theta_{*}$ and is thus ``scale-free''. 
The intuition behind the proposed ADDS
 is that estimation of the signal and estimation of its variance are closely related and  can benefit
from each other: a more accurate estimation of the variance can lead to a better signal
estimation and a more accurate signal estimation can help estimate the variance better. Notice that the solution is well defined if ${\rm rank}(W)=n$; when ${\rm rank}(W)=n$, $\hat{\theta}(0)$ exists and  $0$ satisfies the constraint in the optimization problem (\ref{eq: auto-scaling DS}).
Moreover,  the solution path $\sigma\mapsto\hat{\theta}(\sigma)$ can be computed very
efficiently using algorithms such as DASSO   (\cite{james2009dasso})
and the parametric simplex method (\cite{pang2014fastclime,vanderbei2015linear}). Once the solution path is obtained,
computing $\tilde{\sigma}_{u}$ is only a one-dimensional optimization
problem and thus can be solved efficiently.

%Although the estimator in (\ref{eq: theta estimate}) takes the form of the familiar Lasso estimator, we do not have the usual bound for $\|\hat{\theta}-\theta_{*}\|_{1}$ due to the lack of sparsity in $\theta_{*}$. 

The estimator for $\Pi_{*}$ also takes the  form of ADDS estimator and is defined as  follows. For every $j \in \{ 1, \dots, p\}$, $\hat{\Pi}_{j}(\tilde{\sigma}_{j}) \in \mathbb{R}^p$ is defined by
%\begin{equation}
%\hat{\Pi} =(\hat{\Pi}_{1},\cdots,\hat{\Pi}_{p})\in\mathbb{R}^{p\times p}\qquad{\rm with}\qquad\hat{\Pi}_{j}=\hat{\Pi}_{j}(\tilde{\sigma}_{j}),
%\end{equation}
%and 
\begin{equation}\label{eq:???}
\begin{array}{cccc}
\hat{\Pi}_{j}(\sigma)&=\underset{\pi\in\mathbb{R}^{p}}{\arg\min}\  & \|\pi\|_{1} \\
& \ \ \  \ \ \  \ \mbox{ s.t. } &  
\Bigl \|n^{-1}(X_A + X_B)^{\top}(X_{A,j} - X_{B,j}-X_A\pi - X_B \pi) \Bigl\|_{\infty}&\leq\eta\sigma
\end{array},
\end{equation}
and
\begin{equation}\label{eq:????}
\begin{array}{cccc} 
\tilde{\sigma}_{j}&=\underset{\sigma\geq0}{\arg\max}\  & \sigma \\
& \ \ \  \ \ \  \ \mbox{ s.t. } &  {\Bigl\|X_{A,j} - X_{B,j}-X_A\hat{\Pi}_{j}(\sigma) - X_B \hat{\Pi}_{j}(\sigma) \Bigl\|_{2}^{2} }&\geq n{\sigma^{2}}/{2}
\end{array}.
\end{equation}
where  $\eta\asymp\sqrt{n^{-1}\log p}$ is a tuning parameter. 

Then, the  ADDS estimator of $\Pi_*$ is defined as 
\begin{equation}
\widetilde \Pi = (\widetilde{\Pi}_1,\cdots,\widetilde{\Pi }_p)\in \mathbb{R}^{p\times p}\quad{\rm with}\quad \widetilde{\Pi}_j=\hat{\Pi}_{j}(\tilde{\sigma}_{j})\in \mathbb{R}^p.\label{eq: Pi estimate}
\end{equation}

Observe that ADDS estimator $\widetilde \Pi$ allows for adaptive and more accurate tuning of the heteroscedastic components $v_1,\dots, v_p$. Another advantage of ADDS is that the structure of the ADDS allow us to derive certain properties of our test without  any restrictions on the distribution of the vector $v$. 
 We relegate further discussions of ADDS for the Section \ref{sec:asds}.

\subsubsection{A new test statistics}
In this section we propose a new test statistics and a new simulation method to obtain its critical value.

We develop a test statistic for testing \eqref{eq: null hypo}  as a scale-adjusted estimator of the moment condition  \eqref{eq: moment condition} that requires estimates of $\Pi_*$ and $\theta_*$.  For $\widetilde \Pi$ and $\widetilde{\theta}$ defined in (\ref{eq: Pi estimate}) and (\ref{eq: theta estimate}), we define our test statistic
\begin{equation}
T_{n}=n^{-1/2}\hat{\sigma}_{u}^{-1} \Bigl \|(Z-W\widetilde{\Pi})^{\top}(Y-W\widetilde{\theta}) \Bigl \|_{\infty}, \label{eq: test stat Gauss}
\end{equation}
with $\hat{\sigma}_{u}$ defined as 
\[
\hat{\sigma}_{u}=\bigl \|Y-W\widetilde{\theta} \bigl \|_{2}/\sqrt{n}.
\]
The value of $T_n$ tends to be moderate when $H_0$ is true, and large when $H_0$ is false. Therefore, our  test  is to reject  $H_0$ in favor  of $
H_1:\  \beta_A \neq \beta_B$
if $T_n$ is 
``too large.'' 
Observe that the distribution of the test statistic $T_n$ is difficult to obtain due to the complicated dependencies between different entries of $(Z-W\widetilde{\Pi})^{\top}(Y-W\widetilde{\theta})$. However, we propose a new plug-in Gaussian approximation method that automatically takes into account such inter-dependence and the high-dimensionality. The critical value is defined as the pre-specified quantile of  the $\ell_\infty$ norm of a zero-mean Gaussian vector with known covariance matrix and is thus easy to compute by simulation.

Under $H_{0}$, the behavior of the test statistic can be analyzed
through the following decomposition
\[
n^{-1/2}(Z-W\widetilde{\Pi})^{\top}(Y-W\widetilde{\theta})=n^{-1/2}V^{\top}\hat U+\Delta,
\]
where $\hat U =Y-W\widetilde{\theta}$,
$$\Delta =n^{-1/2}(\Pi_{*}-\widetilde{\Pi})^{\top}W^{\top} \hat U$$
 and  
$V=(V_{1},\cdots,V_{p})$ with  $$V_{j}=Z_{j}-W\Pi_{*,j}.$$ As
we will show in the proof, 
$$\| \Delta \|_{\infty}\hat{\sigma}_{u}^{-1}=o_{P}(1)$$
and the behavior of $T_{n}$ is driven by $\|n^{-1/2}V^{\top}\hat U\hat{\sigma}_u^{-1}\|_{\infty}$.

Our approximation procedure is rooted in the implicit independence in the linear approximation term $\sum_{i=1}^{n}v_{i}\hat{u}_{i}$, where $\hat{u}_i$ is the $i$-th entry of $\hat{U}$ and $v_i^\top$ is the $i$-th row of $V$. 
 Suppose that \(H_{0}\) \eqref{eq: null hypo} holds. Notice that $V$ is independent of $(Y,W)$ by construction and  that we purposely constructed ADDS estimator  $\widetilde{\theta}$ as a function of 
 $(Y,W)$ only. Therefore, $V$ is independent of $\hat U$.
Because of this independence, 
\begin{equation}
\left\Vert n^{-1/2}\sum_{i=1}^{n}v_{i}\hat{u}_{i}\hat{\sigma}_u^{-1}\right\Vert _{\infty} \ {\rm conditional\ on}\ \{\hat{u}_{i}\}_{i=1}^{n}\ {\rm has\ the\ same\ distribution\ as\  }\  \|\xi\|_\infty, \label{eq: decomp}
\end{equation}
where $$\xi\sim \mathcal{N}(0,Q)\qquad \mbox{and} \qquad   Q=n^{-1}\sum_{i=1}^{n}\hat{\sigma}_u^{-2}\hat{u}_i^2\mathbb{E}[v_{i}v_{i}^\top]=\mathbb{E}[v_1v_1^\top ].$$
 Observe that covariance matrix $Q$ is an unknown nuisance parameter in approximating  the distribution of $T_n$ with  $\|\mathcal{N}(0,Q)\|_\infty$. Per Lemma \ref{lem:1}, $Q$ is a positive-definite matrix of growing dimensions. 
  We propose to construct $\hat{Q}$, an
estimator for $Q$, and then use a plug-in approach for the critical value by simulating the distribution of   $\|\mathcal{N}(0,\hat{Q})\|_\infty$. 
We consider a natural  estimator
\begin{equation}
\hat{Q}=n^{-1}\sum_{i=1}^{n}\hat{v}_{i}\hat{v}_{i}^{\top} ,\label{eq: Q hat def}
\end{equation}
where $\hat{v}_i^\top$ is the $i$-th row of $\hat{V}=Z-W \widetilde{\Pi}\in\mathbb{R}^{n\times p}$.  In Section \ref{sec: HD approx}, we develop a general approximation theory that does not depend on the specific form of $\hat Q$, as long as it is a sufficiently good estimator of $Q$.
For the ease of presentation, we introduce the function $\Gamma(x,A):=\mathbb{P}(\|\xi\|_{\infty}\leq x)$ with
$\xi\sim \mathcal{N}(0,A)$. 
Notice that for a given matrix $A$, $\Gamma(\cdot,A)$ can be easily computed via simulation. We summarize  our method in Algorithm \ref{alg: main}.

\begin{rem}
Notice that we assume the Gaussianity of $v_{i}$ and thus the distribution
of $\|n^{-1/2}\sum_{i=1}^{n}v_{i}\hat{u}_{i}\hat{\sigma}_u^{-1}\|_{\infty}$ is exactly
Gaussian. The above setup is
more general in that it also applies when the Gaussianity of $v_{i}$
fails. See Section \ref{sec: non-Gaussian} for details.
\end{rem}

 % Insert the algorithm
\begin{algorithm} 
\small
\caption{Testing of Equality of Regression Slopes (TIERS)} \label{alg: main}
\begin{algorithmic}[1]
    \Require  Two samples $(X_A,Y_A)$ and $(X_B,Y_B)$ and  level $\alpha \in (0,1)$   of the test.
    \Ensure  Decision whether or not to reject the null hypothesis (\ref{eq: null hypo})
    \State Construct $W=X_{A}+X_{B}$, $Z=X_{A}-X_{B}$ and $Y=Y_{A}+Y_{B}$
    \State Compute $\widetilde{\theta}$ and $\widetilde{\Pi}$ as in (\ref{eq: theta estimate})
and (\ref{eq: Pi estimate}), respectively, with tuning parameter
$\eta\asymp\sqrt{n^{-1}\log p}$

        \State Compute the test statistic $T_n$ as in (\ref{eq: test stat Gauss}) and $\hat{Q}$ as in (\ref{eq: Q hat def}). \textcolor{red}{} 
        \State Compute approximately $\Gamma^{-1}(1-\alpha,\hat{Q})$  (by simulation), where $\Gamma^{-1}(\cdot,\hat{Q})$ is the inverse of $\Gamma(\cdot,\hat{Q})$.

   \Return 
  Reject $H_{0}$ (\ref{eq: null hypo}) if and only if $T_{n}>\Gamma^{-1}(1-\alpha,\hat{Q})$.
\end{algorithmic}
\end{algorithm}

The methodology  for our approximation  when the   null  hypothesis prevails closely parallels that for construction of 
critical values in classical statistics, in which the limiting distribution of the test statistics can be derived but contains unknown nuisance parameters. In low-dimensional problems it is common to resort to a plug-in principle where the nuisance parameter is replaced by its consistent estimate. In this paper, we deal with high-dimensional problems for which the extreme dimensionality $p\gg n$ renders the classical central limit theorems non applicable and poses challenges in deriving accurate approximations of the distributions of test statistics. With recent advances in high-dimensional central limit theorem (see \cite{chernozhukov2013gaussian} for example), we are able to generalize the classical plug-in method to the high-dimensional settings.

\section{A high-dimensional plug-in principle}\label{sec: HD approx}

Many test statistics related to ratios, correlation and regression coefficients in statistics may be expressed as a nonlinear function of the vector  of population quantities. Additionally, in high-dimensional setting test statistics often take the form of the maximum of a large number of random quantities   
$\max_{1\leq j \leq p} |G_{n,j}|$,  with $G_{n,j} \in \mathbb{R}$. Individual $G_{n,j}$'s can be studentized t-statistics, such as in \cite{Dezeure16}, or simply the difference between a parameter and its estimator, such as in \cite{Guang16}.
Studying asymptotic distribution of such non-linear quantities is extremely difficult. 
However,  linearization may prove to be  a useful technique. Linearization decomposes a test statistic of interest into a linear term and an approximation error: 
\[
G_{n,j} = n^{-1/2}\sum_{i=1}^{n}\Psi_{i,j} + \Delta_{n,j}.
\]
In display above we consider  $\Psi_{i}=(\Psi_{i,1},\cdots,\Psi_{i,p})^\top\in\mathbb{R}^{p}$ with $p\gg n$ and $i=1,\dots, n$.

In this section, we propose a  general method of computing the critical value of the test statistic $\max_{1\leq j \leq p} |G_{n,j}|$. This method 
 is simple to implement and applies to a wide range of problems for which the above decomposition holds.
The method is based on Gaussian  approximations, as they enable easy approximations by simulation. Apart from high-dimensionality, the challenge is the presence of  the approximation error $\Delta_{n}=(\Delta_{n,1},\cdots,\Delta_{n,p})^\top \in \mathbb{R}^p$
and the fact that the linear terms $\Psi_{i}$ are often not observed, i.e. depend on unknown parameters.

To present the main result, we define the covariance matrix $Q=n^{-1}\sum_{i=1}^{n}\mathbb{E}(\Psi_{i}\Psi_{i}^{\top}\mid\mathcal{F}_{n})$, where  $\mathcal{F}_{n}$ is a $\sigma$-algebra such that $\Psi_i$, conditional on $\mathcal{F}_n$, is independent (or weakly dependent, e.g., strong mixing) across $i$.  Moreover, we make assumptions on the structure  of matrix $Q$ and linearization terms.

\begin{assumption} \label{as:1}
Suppose that 
\\
(i) there exist constants $b_{1},b_{2}\in(0,\infty)$ such that $$\mathbb{P}\left(b_{1}\leq\min_{1\leq j\leq p}Q_{j,j}\leq\max_{1\le j\leq p}Q_{j,j}\leq b_{2}\right)\rightarrow1;$$
\\
(ii) $\sup_{x\in\mathbb{R}}\left|\mathbb{P}\left(\|n^{-1/2}\sum_{i=1}^{n}\Psi_{i}\|_{\infty}\leq x\mid\mathcal{F}_{n}\right)-\Gamma(x,Q)\right|=o_{P}(1)$;\\
(iii) $\|\Delta_{n}\|_{\infty}\sqrt{\log p}=o_{P}(1)$ and there exists a matrix $\hat Q$  such that $\|\hat{Q}-Q\|_{\infty}\sqrt{\log p}=o_{P}(1)$. 
\end{assumption}

Assumption \ref{as:1}(ii) states
a Gaussian approximation for the partial sum $n^{-1/2}\sum_{i=1}^{n}\Psi_{i}$.
Sufficient conditions for this assumption are provided by  \cite{chernozhukov2013gaussian,chernozukov2014central}; see Proposition \ref{prop: HD CLT CCK} in the appendix.  Assumption \ref{as:1}(iii) is very mild and only assumes entry-wise consistency of the matrix estimator. For example, if $\{\Psi_i\}_{i=1}^n$ is sub-Gaussian (\cite{vershynin2010introduction}) and is observed, Bernstein's inequality and the union bound imply that the sample covariance matrix satisfies 
  this assumption if $\log p=o(\sqrt{n})$. 

\begin{thm}
\label{thm: approx assuming CLT} Suppose that Assumption \ref{as:1} holds. Then, as $n,p \to \infty$
\[
\sup_{x\in\mathbb{R}}\left|\mathbb{P}\left(\max_{1\leq j \leq p} |G_{n,j}|\leq x\mid\mathcal{F}_{n}\right)-\Gamma(x,\hat{Q})\right|=o_{P}(1).
\]
\end{thm}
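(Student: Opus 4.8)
The plan is to prove the statement by a triangle‑inequality decomposition that transports the observed statistic first to its linear part, then to a Gaussian law with the population covariance $Q$, and finally to the plug‑in covariance $\hat Q$. Write $S_{n}=n^{-1/2}\sum_{i=1}^{n}\Psi_{i}$, so that $\max_{j}|G_{n,j}|=\|S_{n}+\Delta_{n}\|_{\infty}$. For each fixed $x$ I would bound
\[
\bigl|\mathbb{P}(\max_{j}|G_{n,j}|\le x\mid\mathcal F_{n})-\Gamma(x,\hat Q)\bigr|\le I_{x}+II_{x}+III_{x},
\]
where $I_{x}=|\mathbb{P}(\|S_{n}+\Delta_{n}\|_{\infty}\le x\mid\mathcal F_{n})-\mathbb{P}(\|S_{n}\|_{\infty}\le x\mid\mathcal F_{n})|$, $II_{x}=|\mathbb{P}(\|S_{n}\|_{\infty}\le x\mid\mathcal F_{n})-\Gamma(x,Q)|$ and $III_{x}=|\Gamma(x,Q)-\Gamma(x,\hat Q)|$, and then control $\sup_{x}$ of each term. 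The term $II_{x}$ is immediate: Assumption \ref{as:1}(ii) gives $\sup_{x}II_{x}=o_{P}(1)$ directly.

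For the linearization error $I_{x}$, since $\|\Delta_{n}\|_{\infty}\sqrt{\log p}=o_{P}(1)$ I would select a deterministic sequence $\delta_{n}\to0$ with $\delta_{n}\sqrt{\log p}\to0$ and $\mathbb{P}(\|\Delta_{n}\|_{\infty}>\delta_{n})\to0$. On the event $\{\|\Delta_{n}\|_{\infty}\le\delta_{n}\}$ one has the sandwich $\|S_{n}\|_{\infty}-\delta_{n}\le\|S_{n}+\Delta_{n}\|_{\infty}\le\|S_{n}\|_{\infty}+\delta_{n}$, which yields
\[
\mathbb{P}\bigl(\|S_{n}\|_{\infty}\le x-\delta_{n}\mid\mathcal F_{n}\bigr)-\rho_{n}\ \le\ \mathbb{P}\bigl(\max_{j}|G_{n,j}|\le x\mid\mathcal F_{n}\bigr)\ \le\ \mathbb{P}\bigl(\|S_{n}\|_{\infty}\le x+\delta_{n}\mid\mathcal F_{n}\bigr)+\rho_{n},
\]
with $\rho_{n}=\mathbb{P}(\|\Delta_{n}\|_{\infty}>\delta_{n}\mid\mathcal F_{n})$, and $\rho_{n}=o_{P}(1)$ by Markov's inequality since $\mathbb{E}\rho_{n}\to0$. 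I then replace $\mathbb{P}(\|S_{n}\|_{\infty}\le x\pm\delta_{n}\mid\mathcal F_{n})$ by $\Gamma(x\pm\delta_{n},Q)$ using Assumption \ref{as:1}(ii) a second time, and finally apply the Gaussian anti‑concentration inequality of Chernozhukov--Chetverikov--Kato — licensed by Assumption \ref{as:1}(i), which keeps the variances $Q_{j,j}$ in $[b_{1},b_{2}]$ — to obtain $|\Gamma(x\pm\delta_{n},Q)-\Gamma(x,Q)|\le C\delta_{n}\sqrt{\log p}=o(1)$ uniformly in $x$. Collecting these estimates shows $\sup_{x}I_{x}=o_{P}(1)$ and, together with $II_{x}$, that the conditional CDF is within $o_{P}(1)$ of $\Gamma(\cdot,Q)$ uniformly.

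The remaining covariance‑replacement term $III_{x}$ is the crux. Both $Q$ and $\hat Q$ have diagonals confined to $[b_{1},b_{2}]$ with probability tending to one (Assumptions \ref{as:1}(i),(iii)), so I would invoke a Gaussian comparison inequality for $\ell_{\infty}$‑maxima comparing $\mathcal N(0,Q)$ and $\mathcal N(0,\hat Q)$; the absolute value $\|\cdot\|_{\infty}=\max_{j}|\xi_{j}|$ is handled by doubling the coordinates to $(\xi_{1},-\xi_{1},\dots,\xi_{p},-\xi_{p})$, which leaves the diagonal variances unchanged and the pairwise covariance discrepancies still bounded by $\|\hat Q-Q\|_{\infty}$. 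This bounds $\sup_{x}III_{x}$ by a polylogarithmic factor in $p$ times a power of $\|\hat Q-Q\|_{\infty}$, and Assumption \ref{as:1}(iii) is designed to drive this to $o_{P}(1)$. Note that the comparison inequality is a deterministic statement about two Gaussian laws with given covariances, so it applies pathwise to the realized random matrices $Q,\hat Q$ before passing to the $o_{P}(1)$ conclusion.

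The main obstacle is exactly this third step. The CDF‑level comparison is produced by smoothing the indicator of $\{\max\le x\}$ through a soft‑max surrogate and interpolating (Slepian/Stein) between the two covariances; optimizing the smoothing scale trades the anti‑concentration error against the interpolation error and typically yields a cube‑root bound of the form $\|\hat Q-Q\|_{\infty}^{1/3}(\log p)^{2/3}$. The delicate point is therefore to verify that the precise polylogarithmic rate emerging from the comparison is genuinely dominated by the decay of $\|\hat Q-Q\|_{\infty}$ furnished by Assumption \ref{as:1}(iii), keeping all anti‑concentration and interpolation constants controlled through the variance bounds of Assumption \ref{as:1}(i) so that the estimate remains uniform in $x$; one must also confirm in the application that the specific estimator $\hat Q$ of \eqref{eq: Q hat def} attains the required rate. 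Once this comparison step is secured, assembling $\sup_{x}(I_{x}+II_{x}+III_{x})=o_{P}(1)$ is routine.
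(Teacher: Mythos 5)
Your proposal follows essentially the same route as the paper's proof: the same three-term decomposition, the same sandwich-plus-anti-concentration treatment of the linearization error $\Delta_n$ (the paper fixes $\varepsilon_n=\delta/\sqrt{\log p}$ and passes to $o_P(1)$ via uniform integrability rather than extracting a deterministic $\delta_n$, an immaterial difference), and the same coordinate-doubling device $(\xi^{\top},-\xi^{\top})^{\top}$ so that the Gaussian comparison result (Lemma 3.1 of \cite{chernozhukov2013gaussian}) handles the replacement of $Q$ by $\hat{Q}$. The ``delicate point'' you flag in the covariance step is exactly where the paper's argument lands as well --- it applies the cube-root bound $C\|\hat{Q}-Q\|_{\infty}^{1/3}\bigl[1\vee\log\bigl(p/\|\hat{Q}-Q\|_{\infty}\bigr)\bigr]^{2/3}$ and concludes directly from $\|\hat{Q}-Q\|_{\infty}\sqrt{\log p}=o_{P}(1)$ --- so your outline matches the published proof.
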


Theorem   \ref{thm: approx assuming CLT} states that, under the regularity conditions stated in Assumption \ref{as:1}, the distribution of the test statistic $\max_{1\leq j \leq p} G_{n,j}$ can be approximated by $\Gamma(\cdot,\hat{Q})$, which can be easily  simulated.
Notice that the  multiplier bootstrap method by \cite{chernozhukov2013gaussian,chernozukov2014central}, which requires explicit observations of $\Psi_i$, does not apply in our context. 

\section{Theoretical results} \label{sec: theory}

In this section we present theoretical guarantees and optimality of the test proposed in Section \ref{sec: methodology}. We also consider extensions to non-Gaussian designs and present theoretical results in this setting as well. We start by deriving the theoretical properties of ADDS.

\subsection{ADDS properties} \label{sec:asds}

Auto-aDaptive Dantzig Selector introduced in Section \ref{sec:asds-intro} is broadly applicable to a class of linear models where an estimator of the high-dimensional parameter is needed together with  its scale. In this section we provide more details of the proposed estimator and its properties in a setup where apart from a sparsity constraint we allow for a general class of   constraints as well. Our result is comparable to the Dantzig selector; see \cite{candes2007dantzig} and  \cite{bickel2009simultaneous}.

With a slight abuse in notation, we consider a model  
$$H=Gb_{*}+\varepsilon,$$ 
where $H\in\mathbb{R}^{n}$ is a response vector, $G\in\mathbb{R}^{n\times p}$  is a design matrix, 
$\varepsilon\in\mathbb{R}^{n}$ is an error of the model and 
$b_{*}\in\mathbb{R}^{p}$ is the unknown parameter of interest. Let $\sigma \mapsto \mathcal{B}(\sigma)$ be a mapping such that (1) $\mathcal{B}(\sigma)\subseteq\mathbb{R}^p$  for any $ \sigma\geq0$ and (2) $\mathcal{B}(\sigma_{1})\subseteq\mathcal{B}(\sigma_{2})$
for $\sigma_{1}\leq\sigma_{2}$. We shall provide more discussion on the set $\mathcal{B}(\sigma)$ later. Then a generalized ADDS is defined as 
\begin{equation}\label{eq: DS optimization step 1}
\begin{array}{cccc}
\hat{b}(\sigma)&=\underset{b\in\mathbb{R}^{p}}{\arg\min}\ & \| b \|_{1}     \\
& \ \ \  \ \ \  \ \mbox{ s.t. } & \left \|n^{-1}G^{\top}(H-Gb)\right \|_{\infty}&\leq\eta\sigma   \\
& \ \ \  \ \ \  \   & b\in\mathcal{B}(\sigma)
\end{array},
\end{equation}
  
where $\eta\asymp\sqrt{n^{-1}\log p}$ is a tuning parameter that
does not depend on the magnitude of $\varepsilon$.
Then we compute 

\begin{equation}\label{eq: auto-scaling DS step 2}
\begin{array}{cccc} 
\tilde{\sigma}&=\underset{\sigma\geq0}{\arg\max}\  & \sigma\\
& \ \ \  \ \ \  \ \mbox{ s.t. } &  {\|H-G\hat{b}(\sigma)\|_{2}^{2} }&\geq n{\sigma^{2}}/{2}
\end{array}.
\end{equation}

Now  the ADDS estimator for $b_{*}$  is defined as 
\begin{equation}
\widetilde{b}=\hat{b}(\tilde{\sigma})\label{eq: theta estimate-1}.
\end{equation}

In the proposed method, the estimation starts with the  framework of Dantzig selector where the  tuning parameter is split into two components: a variance-tuning component $\sigma$ and variance-free component $\eta$.  Variance is therefore treated as another tuning parameter and the standardized ``regularization'' tuning parameter is fixed at an optimal  theoretical value proportional to $\sqrt{\log (p)/n}$.  In practice, we can obtain $\eta$
by simulating $\|n^{-1}G^{\top}\xi\|_{\infty}$, where $\xi \sim \mathcal{N}(0,I_{n})$. In the first step (\ref{eq: DS optimization step 1}), we compute the solution path, which maps the variance tuning parameter $\sigma$ to its  estimate $\hat{b}(\sigma)$.  Then in the second step (\ref{eq: auto-scaling DS step 2}), we compute an ``optimal'' choice for the variance tuning parameter $\tilde{\sigma}$. The ADDS is then defined as the point on the solution path corresponding to this optimal choice  $\tilde{\sigma}$.

The set $\mathcal{B}(\sigma)$ is introduced for a general setup,
where additional constraints other than the usual Dantzig restrictions
are imposed. These additional constraints are represented by the set $\mathcal{B}(\sigma)$. This set   could  incorporate prior knowledge of the parameter of interest, e.g., a bound for the signal-to-noise ratio represented by  $\mathcal{B}(\sigma) = \left\{ b \in \mathbb{R}^p: \|b\|_2 \leq c\sigma \right\}$ with some pre-specified $c>0$. In case of non-Gaussian designs (see Algorithm \ref{alg: main nonGaussian}),   an additional constraint is placed to ensure the high-dimensional central limit
theorem. Other strategies in literature, such as the square-root Lasso by \cite{belloni2011square}, the scaled Lasso by \cite{sun2012scaled} and self-tuned Dantzig selector by \cite{gautier2013pivotal}, do not have this flexibility. 
 
\begin{assumption}\label{assu:asds}
There exist constants $\eta,\kappa,\sigma_{*}>0$
such that (i) $\|n^{-1}G^{\top}\varepsilon\|_{\infty}\leq\eta\sigma_{*}$,
(ii) $3\sigma_{*}^{2}/4\leq n^{-1}\|\varepsilon\|_{2}^{2}\leq2\sigma_{*}^{2}$,
(iii) $b_{*}\in\mathcal{B}(\sigma_{*})$,  (iv) $28\eta\sqrt{\|b_{*}\|_{0}/\kappa}\leq1$
and the matrix $G$ is such that the Restricted Eigenvalue condition holds, i.e., (v)
\begin{equation}
\min_{J_{0}\subseteq\{1,\cdots,p\},|J_{0}|\leq\|b_{*}\|_{0}}\min_{a\neq0,\|a_{J_{0}^{c}}\|_{1}\leq\|a_{J_{0}}\|_{1}}\frac{\|Ga\|_{2}^{2}}{n\|a_{J_{0}}\|_{2}^{2}}\geq\kappa.\label{eq: RE condition}
\end{equation}
\end{assumption}
 
In the usual linear regression setup, one can typically show that  Assumption \ref{assu:asds} holds with high probability. This is in line with the usual argument in high-dimensional statistics, where the conclusion often states the properties of an estimator on an event that occurs with probability close to one. 
 Now we present the behavior of ADDS under Assumption \ref{assu:asds}.

\begin{thm}
\label{thm: ADDS}
Let Assumption \ref{assu:asds} hold. Consider $\tilde{\sigma}$ and  $\widetilde{b}$  defined in  \eqref{eq: auto-scaling DS step 2} and \eqref{eq: theta estimate-1}, respectively. 
Then,  for $\hat{\sigma}=n^{-1/2}\|H-G\widetilde{b}\|_{2}$, we have
\begin{align}
\sigma_{*}&\leq\tilde{\sigma}\leq3\sigma_{*},\label{ADDS part 1}
\\
\|G(\widetilde{b}-b_{*})\|_{2}&\leq8\sigma_{*}\eta\sqrt{n\|b_{*}\|_{0}/\kappa},\label{ADDS part 2}
\\
\|\widetilde{b}-b_{*}\|_{1}&\leq16\eta\sigma_{*}\|b_{*}\|_{0}/\kappa,\label{ADDS part 3}\\
\sigma_{*}/\sqrt{2} & \leq \hat{\sigma} \leq2\sigma_{*},\label{ADDS part 4}\\
\|n^{-1}G^{\top}(H-G\widetilde{b})\|_{\infty}\hat{\sigma}^{-1}&\leq3\sqrt{2}\eta.\label{ADDS part 5}
\end{align}
 \end{thm}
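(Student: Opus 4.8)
\emph{Overall plan.} The plan is to reduce the whole statement to a single Dantzig-type error bound evaluated at an arbitrary point of the solution path, and then to sandwich the adaptively chosen scale $\tilde\sigma$ inside $[\sigma_*,3\sigma_*]$; once \eqref{ADDS part 1} is in hand, the estimates \eqref{ADDS part 2}--\eqref{ADDS part 5} follow by bookkeeping. First I would record that the truth is feasible: by Assumption \ref{assu:asds}(i),(iii), $b_*$ satisfies both constraints in \eqref{eq: DS optimization step 1} at $\sigma=\sigma_*$, since $\|n^{-1}G^\top(H-Gb_*)\|_\infty=\|n^{-1}G^\top\varepsilon\|_\infty\le\eta\sigma_*$ and $b_*\in\mathcal B(\sigma_*)$. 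By the nesting $\mathcal B(\sigma_*)\subseteq\mathcal B(\sigma)$ for $\sigma\ge\sigma_*$ and the monotone relaxation of the Dantzig ball, $b_*$ stays feasible for every $\sigma\ge\sigma_*$, so $\hat b(\sigma)$ is well defined there with $\|\hat b(\sigma)\|_1\le\|b_*\|_1$. Writing $h=\hat b(\sigma)-b_*$ and $S=\mathrm{supp}(b_*)$, this optimality yields the cone condition $\|h_{S^c}\|_1\le\|h_S\|_1$, so Assumption \ref{assu:asds}(v) applies. Using $Gb_*=H-\varepsilon$ one gets $n^{-1}G^\top Gh=-n^{-1}G^\top(H-G\hat b(\sigma))+n^{-1}G^\top\varepsilon$, whence $\|n^{-1}G^\top Gh\|_\infty\le\eta(\sigma+\sigma_*)$ by Dantzig feasibility of $\hat b(\sigma)$ and (i). Combining $\|Gh\|_2^2\le n\|h\|_1\|n^{-1}G^\top Gh\|_\infty$ with the cone bound $\|h\|_1\le2\sqrt{\|b_*\|_0}\,\|h_S\|_2$ and the RE inequality $\|h_S\|_2\le\|Gh\|_2/\sqrt{n\kappa}$ gives the master bounds $\|Gh\|_2\le 2\eta(\sigma+\sigma_*)\sqrt{n\|b_*\|_0/\kappa}$ and $\|h\|_1\le 4\eta(\sigma+\sigma_*)\|b_*\|_0/\kappa$.

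\emph{The crux: \eqref{ADDS part 1}.} For the lower bound $\tilde\sigma\ge\sigma_*$ I would show $\sigma_*$ is feasible for \eqref{eq: auto-scaling DS step 2}. By the triangle inequality $\|H-G\hat b(\sigma_*)\|_2\ge\|\varepsilon\|_2-\|G(\hat b(\sigma_*)-b_*)\|_2$; Assumption \ref{assu:asds}(ii) gives $\|\varepsilon\|_2\ge(\sqrt3/2)\sqrt n\,\sigma_*$, while the master bound at $\sigma=\sigma_*$ together with (iv) (i.e.\ $28\eta\sqrt{\|b_*\|_0/\kappa}\le1$) gives $\|G(\hat b(\sigma_*)-b_*)\|_2\le4\sigma_*\eta\sqrt{n\|b_*\|_0/\kappa}\le\sqrt n\,\sigma_*/7$. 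Since $\sqrt3/2-1/7\ge1/\sqrt2$, this forces $\|H-G\hat b(\sigma_*)\|_2^2\ge n\sigma_*^2/2$, so $\tilde\sigma\ge\sigma_*$. For the upper bound, any feasible $\sigma\ge\sigma_*$ obeys $\sigma\sqrt{n/2}\le\|H-G\hat b(\sigma)\|_2\le\|\varepsilon\|_2+\|Gh\|_2\le\sqrt2\sqrt n\,\sigma_*+\sigma\sqrt n/7$ by (ii) and the master bound; rearranging and using $\sqrt2/(1/\sqrt2-1/7)\le3$ yields $\sigma\le3\sigma_*$. Since $\tilde\sigma$ is the feasible maximizer and $\tilde\sigma\ge\sigma_*$, this gives $\tilde\sigma\le3\sigma_*$.

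\emph{Deriving the remaining bounds.} With \eqref{ADDS part 1} established the rest is arithmetic. Evaluating the master bounds at $\sigma=\tilde\sigma\le3\sigma_*$ gives $\|G(\tilde b-b_*)\|_2\le2\eta(\tilde\sigma+\sigma_*)\sqrt{n\|b_*\|_0/\kappa}\le8\sigma_*\eta\sqrt{n\|b_*\|_0/\kappa}$, which is \eqref{ADDS part 2}, and likewise $\|\tilde b-b_*\|_1\le16\eta\sigma_*\|b_*\|_0/\kappa$, which is \eqref{ADDS part 3}. For \eqref{ADDS part 4}, feasibility of $\tilde\sigma$ and $\tilde\sigma\ge\sigma_*$ give $\hat\sigma=n^{-1/2}\|H-G\tilde b\|_2\ge\tilde\sigma/\sqrt2\ge\sigma_*/\sqrt2$, whereas $\|H-G\tilde b\|_2\le\|\varepsilon\|_2+\|G(\tilde b-b_*)\|_2\le\sqrt2\sqrt n\,\sigma_*+(2/7)\sqrt n\,\sigma_*\le2\sqrt n\,\sigma_*$ by (ii), \eqref{ADDS part 2} and (iv), giving $\hat\sigma\le2\sigma_*$. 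Finally \eqref{ADDS part 5} combines the Dantzig feasibility of $\tilde b=\hat b(\tilde\sigma)$, namely $\|n^{-1}G^\top(H-G\tilde b)\|_\infty\le\eta\tilde\sigma\le3\eta\sigma_*$, with $\hat\sigma^{-1}\le\sqrt2/\sigma_*$ from \eqref{ADDS part 4}.

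\emph{Main obstacle.} The delicate step is pinning down $\tilde\sigma$ in \eqref{ADDS part 1}: because the scale is chosen adaptively and both $\mathcal B(\sigma)$ and the Dantzig ball move with $\sigma$, the residual $\|H-G\hat b(\sigma)\|_2$ must be controlled uniformly along the path rather than at one prescribed noise level, and the lower and upper bounds require the master inequality to be sharp in its dependence on $\sigma$. The numerical constants in Assumption \ref{assu:asds}(ii),(iv) are exactly calibrated so that $\sqrt3/2-1/7\ge1/\sqrt2$ (closing the lower bound) and $\sqrt2/(1/\sqrt2-1/7)\le3$ (closing the upper bound); I would verify these two inequalities and the precise constant in the master bound with care, as everything downstream rides on them.
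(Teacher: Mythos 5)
Your proposal is correct and follows essentially the same route as the paper's proof: establish the cone condition from $\|\hat b(\sigma)\|_1\le\|b_*\|_1$, combine H\"older, the cone bound and the restricted eigenvalue condition into the prediction/$\ell_1$ "master bounds," use them with Assumption \ref{assu:asds}(ii),(iv) to sandwich $\tilde\sigma\in[\sigma_*,3\sigma_*]$, and then read off \eqref{ADDS part 2}--\eqref{ADDS part 5}. The only (immaterial) difference is that you obtain $\|\hat b(\sigma)\|_1\le\|b_*\|_1$ for all $\sigma\ge\sigma_*$ directly from feasibility of $b_*$ at each level, whereas the paper routes this through the monotonicity of $\sigma\mapsto\|\hat b(\sigma)\|_1$ via $\|\hat b(\tilde\sigma)\|_1\le\|\hat b(\sigma_*)\|_1\le\|b_*\|_1$; the constants and all key inequalities match.
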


Notice that $\sigma_{*}$ might not always be equal to the standard deviation $\sigma_\varepsilon:=\sqrt{\mathbb{E}\|\varepsilon\|_{2}^{2}/n}$.
%since we only require $3\sigma_{*}^{2}/4\leq n^{-1}\|\varepsilon\|_{2}^{2}\leq2\sigma_{*}^{2}$.
In fact, $\sigma_*$ is only a rough proxy of $\sigma_\varepsilon$ since any number in $[0.9\sigma_{\varepsilon},1.1\sigma_{\varepsilon}]$
can serve as  $\sigma_{*}$. This flexibility is especially useful for misspecified models, where $\varepsilon$ is correlated with the design and $\sigma_*$ is a quantity that depends on this correlation. Since $\tilde{\theta}$ defined in (\ref{eq: theta estimate}) does not contain $Z$, the alternative hypothesis corresponds to a misspecified regression and we shall derive the power properties by exploiting the aforementioned flexibility in the interpretation of $\sigma_*$. Due to this flexibility, it is not reasonable to expect consistent estimator for $\sigma_*$, but Theorem \ref{thm: ADDS} implies that ADDS estimator can generate an estimator that automatically approaches  $\|\varepsilon\|_{2}/\sqrt{n}$. Under Assumption \ref{assu:asds}, 
$$n^{-1/2}\left |\|H-G\widetilde{b}\|_{2}-\|\varepsilon\|_{2} \right|\leq n^{-1/2}\|G(\widetilde{b}-b_{*})\|_{2}\leq8\sigma_{*}\eta\sqrt{\|b_{*}\|_{0}/\kappa}.$$
In other words, whenever $\sigma_{*}\eta\sqrt{\|b_{*}\|_{0}/\kappa}=o(1)$, we obtain that $\|H-G\widetilde{b}\|_{2}/\sqrt{n}$ is a consistent estimator
for $\|\varepsilon\|_{2}/\sqrt{n}$.

\subsection{Size properties}

We now turn to the properties  of  the introduced TIERS test  while imposing extremely weak conditions when both $n$ and $p$ tend to $\infty$.

\begin{assumption}
\label{assu: regularity condition}Consider the models (\ref{eq: original model A})
and (\ref{eq: original model B}). Suppose that the following hold:\\
(i) the design follows Gaussian distributions: $x_{A}\sim \mathcal{N}(0,\Sigma_{A})$ and $x_{B}\sim \mathcal{N}(0,\Sigma_{B})$\\
(ii) there exist constants $\kappa_{1},\kappa_{2}\in(0,\infty)$ such
that the eigenvalues of $\Sigma_{A}$ and $\Sigma_{B}$ lie in $(\kappa_{1},\kappa_{2})$\\
(iii) for $s_{\Pi}=\max_{1\leq j\leq p}\|\Pi_{*,j}\|_{0}$, let  $s_{\Pi}=o\left(\sqrt{n/\log^{3}p}\right)$.
\end{assumption}

Conditions (i)-(iii) of Assumption \ref{assu: regularity condition} are very mild. 
  Gaussian designs are considered for the simplicity of the proofs. 
  We  study general   sub-Gaussian
designs in Section \ref{sec: non-Gaussian}.   Moreover, well-behaved designs 
 with bounded eigenvalues of the covariance matrix  is a common condition imposed in
the literature. Finally, condition (iii) imposes column-wise sparsity of the matrix $\Pi_{*}$.  When $\Sigma_{A}=c\Sigma_{B}$ for $c>0$, Lemma \ref{lem:1} implies that $\Pi_{*}=I_{p}(c-1)/(c+1)$,
regardless of the sparsity of $\Sigma_{A}$ and $\Sigma_{B}$, hence satisfying the imposed sparsity assumption. Observe that in contrast to the existing literature we do not assume sparsity of $\Sigma_A$ and $\Sigma_B$ (or their inverses),  by Lemma \ref{lem:1} we only require certain products to be approximately sparse, a condition that is weaker and more flexible; see \cite{cai2012direct}. 
Our first result is on the Type I error of the introduced TIERS test.
\begin{thm}
\label{thm: size result}Consider Algorithm \ref{alg: main}. Let
Assumption \ref{assu: regularity condition} hold. Then, under $H_{0}$
(\ref{eq: null hypo}),  as long as  $\log p=o(\sqrt{n})$ and $n \to \infty$, we have
\[
\mathbb{P}\left(T_{n}>\Gamma^{-1}(1-\alpha,\hat{Q})\right)\rightarrow\alpha\qquad\forall\alpha\in(0,1).
\]

\end{thm}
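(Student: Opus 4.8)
The plan is to fit $T_n$ into the abstract linearization framework of Theorem \ref{thm: approx assuming CLT} and then translate its conditional conclusion into an unconditional statement about the rejection probability. I work under $H_0$, where $\gamma_*=0$ and $Y=W\theta_*+U$, and condition on $\mathcal{F}_n=\sigma(W,U)$. For Gaussian designs the residual $v=z-\Pi_*^\top w$ is uncorrelated with, hence independent of, $w$, and $u$ is independent of the design, so that $V\perp(W,U)$; consequently $\widetilde\theta$ and $\hat U=Y-W\widetilde\theta$ are $\mathcal{F}_n$-measurable, while the rows $v_i$ of $V$ stay i.i.d.\ $\mathcal{N}(0,\mathbb{E}[v_1v_1^\top])$ given $\mathcal{F}_n$. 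I then read off from the decomposition in the text that $G_{n,j}$ is the $j$-th coordinate of $n^{-1/2}\hat\sigma_u^{-1}(Z-W\widetilde\Pi)^\top\hat U$, with linear part $\Psi_{i,j}=\hat\sigma_u^{-1}\hat u_iv_{i,j}$ and remainder $\Delta_{n,j}=\hat\sigma_u^{-1}\Delta_j$, so that $T_n=\max_j|G_{n,j}|$.

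The first substantive step is to verify Assumption \ref{as:1}. Part (ii) is exact rather than approximate: conditional on $\mathcal{F}_n$ the sum $n^{-1/2}\sum_i\Psi_i$ is a fixed linear combination of Gaussian vectors, hence exactly $\mathcal{N}(0,Q)$, and since $n^{-1}\sum_i\hat u_i^2=\hat\sigma_u^2$ one finds $Q=\hat\sigma_u^{-2}(n^{-1}\sum_i\hat u_i^2)\mathbb{E}[v_1v_1^\top]=\mathbb{E}[v_1v_1^\top]=4(\Sigma_A^{-1}+\Sigma_B^{-1})^{-1}$ by Lemma \ref{lem:1}. Because the eigenvalues of $\Sigma_A,\Sigma_B$ lie in $(\kappa_1,\kappa_2)$, this $Q$ is deterministic with diagonal bounded above and below, which is part (i).

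The effort concentrates in Assumption \ref{as:1}(iii), and this is the main obstacle. For the remainder, the score bound $\|n^{-1}W^\top\hat U\|_\infty\hat\sigma_u^{-1}\le\sqrt{2}\,\eta$ follows \emph{directly} from feasibility of $\widetilde\theta$ in \eqref{eq: def DS path} together with $\tilde\sigma_u\le\sqrt{2}\,\hat\sigma_u$ read off from \eqref{eq: auto-scaling DS}; crucially this requires no sparsity of $\theta_*$, which is exactly where dense-model robustness enters, since $\theta_*$ is never estimated consistently. Using $|\Delta_j|\le n^{-1/2}\|\Pi_{*,j}-\widetilde\Pi_j\|_1\|W^\top\hat U\|_\infty$ reduces matters to controlling $\max_j\|\widetilde\Pi_j-\Pi_{*,j}\|_1$, for which I apply Theorem \ref{thm: ADDS} to each regression $Z_j=W\Pi_{*,j}+V_j$: since $W\perp V_j$, standard Gaussian concentration (restricted eigenvalue for $W$, the score bound $\|n^{-1}W^\top V_j\|_\infty\lesssim\eta\,\|V_j\|_2/\sqrt{n}$, and the variance bounds) plus a union bound over $j$ verifies Assumption \ref{assu:asds} with $\|\Pi_{*,j}\|_0\le s_\Pi$ and $\kappa$ bounded below, the tuning condition $28\eta\sqrt{s_\Pi/\kappa}\le1$ holding eventually under Assumption \ref{assu: regularity condition}(iii). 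This gives $\max_j\|\widetilde\Pi_j-\Pi_{*,j}\|_1\lesssim\eta s_\Pi$, hence $\|\Delta_n\|_\infty\lesssim\eta^2n^{1/2}s_\Pi\asymp n^{-1/2}s_\Pi\log p$ and $\|\Delta_n\|_\infty\sqrt{\log p}\lesssim s_\Pi\sqrt{\log^3p/n}=o(1)$. For $\hat Q$ I split $\hat Q-Q=(\hat Q-n^{-1}\sum_iv_iv_i^\top)+(n^{-1}\sum_iv_iv_i^\top-Q)$: the second term is $o_P(1/\sqrt{\log p})$ by entrywise Bernstein when $\log p=o(\sqrt{n})$, and the first, expanded via $\hat v_i-v_i=(\Pi_*-\widetilde\Pi)^\top w_i$, is bounded by $\max_j\|\widetilde\Pi_j-\Pi_{*,j}\|_1$ times $\|n^{-1}W^\top W\|_\infty$ and $\max_k\|n^{-1}W^\top V_k\|_\infty$, both already controlled, again yielding $o_P(1/\sqrt{\log p})$ under Assumption \ref{assu: regularity condition}(iii).

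With Assumption \ref{as:1} in force, Theorem \ref{thm: approx assuming CLT} gives $\sup_x|\mathbb{P}(T_n\le x\mid\mathcal{F}_n)-\Gamma(x,\hat Q)|=o_P(1)$. A Gaussian comparison bound, valid since $\|\hat Q-Q\|_\infty\sqrt{\log p}=o_P(1)$ and $Q$ has bounded diagonal, replaces $\hat Q$ by the deterministic $Q$, and taking expectations yields the unconditional $\sup_x|\mathbb{P}(T_n\le x)-\Gamma(x,Q)|\to0$, so $T_n$ has the nondegenerate limit law $\Gamma(\cdot,Q)$. The same comparison evaluated at the random point $\hat c_\alpha:=\Gamma^{-1}(1-\alpha,\hat Q)$ gives $\Gamma(\hat c_\alpha,Q)=1-\alpha+o_P(1)$. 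To finish I sandwich: because $Q$ has eigenvalues bounded below, $\Gamma(\cdot,Q)$ is continuous and strictly increasing on the relevant range, so for fixed $\epsilon>0$ the deterministic levels $c^{\pm}=\Gamma^{-1}(1-\alpha\mp\epsilon,Q)$ satisfy $c^{-}\le\hat c_\alpha\le c^{+}$ with probability tending to one, whence $\mathbb{P}(T_n>c^{+})+o(1)\le\mathbb{P}(T_n>\hat c_\alpha)\le\mathbb{P}(T_n>c^{-})+o(1)$; letting $n\to\infty$ and then $\epsilon\downarrow0$ yields $\mathbb{P}(T_n>\hat c_\alpha)\to\alpha$, which is the claim.
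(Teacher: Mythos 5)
Your proof follows essentially the same route as the paper: the same decomposition of $T_n$ into the exactly Gaussian term $n^{-1/2}V^{\top}\hat U\hat\sigma_u^{-1}$ plus $\Delta_n$ conditional on $\mathcal{F}_n=\sigma(W,U)$, the same ADDS-plus-union-bound control of $\max_j\|\widetilde\Pi_j-\Pi_{*,j}\|_1$, $\|\Delta_n\|_\infty$ and $\|\hat Q-Q\|_\infty$ (the paper's Lemmas \ref{lem: bias} and \ref{lem: approx variance}), and the same appeal to Theorem \ref{thm: approx assuming CLT}; your closing sandwich argument for the random critical value is in fact more explicit than the paper, which simply asserts the conclusion from Theorem \ref{thm: approx assuming CLT}. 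The only blemish is the convention $c^{\pm}=\Gamma^{-1}(1-\alpha\mp\epsilon,Q)$, which makes $c^{+}<c^{-}$ and so reverses the stated inequalities $c^{-}\leq\hat c_\alpha\leq c^{+}$; with the signs swapped the sandwich closes correctly and the proof is complete.
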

There are two unique features of this result. Firstly,  we do not assume
any sparsity condition on $\beta_{A}$ and $\beta_{B}$. This is remarkable
in high dimensions with $p\gg n$. Secondly, the result of Theorem \ref{thm: size result}  holds without imposing any restriction on the distribution of the
errors $u_{A}$ and $u_{B}$ in the models (\ref{eq: original model A})
and (\ref{eq: original model B}), respectively.  This surprising property is  achieved by the special
design of  the ``partial self-normalization''  of the test statistic and scale-free estimation of the introduced ADDS estimator. Notice that (\ref{eq: decomp}) holds does not require any assumption on the distribution of the error terms, such as the existence of probability densities. In light of this, we show that regardless of the sparsity of $\theta_*$ and/or the distribution of $U$, the  term $n^{-1/2}V^{\top}(Y-W\hat{\theta})\hat{\sigma}_{u}^{-1}$ ``partially self-normalized'' by $\hat{\sigma}_u=\|Y-W\hat{\theta}\|_2/\sqrt{n}$ is   free of  scales of the error terms  and has a normal distribution under $H_0$ (\ref{eq: null hypo}).
 Moreover, as pointed out by \cite{delapena2004}, 
self-normalization often eliminates or weakens moment assumptions. 
  
 \subsection{Power properties}
 
 Due to the convolved regression model (\ref{eq:model2}), we can  assess the power properties of the TIER test by considering the following alternative hypothesis
\begin{equation}  
H_{1}:\ \gamma_*\neq 0. \label{eq: alter hypo}
\end{equation}

It is clear
that the difficulty of differentiating  $H_0$ from $H_1$ depends on the magnitude of $\gamma_*$. We shall establish the rate for  magnitude of $\gamma_*$ such that our test has power approaching one; see Theorem \ref{thm: main result power sparse}. Later, we shall also show that this rate is optimal; see Theorems \ref{thm: LR test} and \ref{thm: minimax}. 
\begin{assumption}
\label{assu: regularity power}Let Assumption \ref{assu: regularity condition}
hold. In addition, suppose (1) that $s_{\Pi}\|\beta_{A}-\beta_{B}\|_{0}+\|\beta_{A}+\beta_{B}\|_{0}=o(\sqrt{n}/\log p)$ and (2) that there exist constant $\delta,\kappa_{3}\in (0,\infty)$ such that   $\mathbb{E}|u_{A}|^{2+\delta}\leq \kappa_{3}$ and $\mathbb{E}|u_{B}|^{2+\delta}\leq \kappa_{3}$. 
\end{assumption}
Assumption \ref{assu: regularity power} is reasonably  weak. It is
not surprising that certain sparse structure is needed to guarantee asymptotic 
power of high-dimensional tests. However, we still allow for lack of sparsity structure in the model parameters $\beta_{A}$
and $\beta_{B}$. In particular, we only require sparsity of  $\beta_{A}+\beta_{B}$
 and  that the product $s_{\Pi}\|\beta_{A}-\beta_{B}\|_{0}$ is  small.
For example, if $\beta_{A}=-\beta_{B}=\beta_{*}$ for some dense $\beta_{*}$
and $\Sigma_{A}=\Sigma_{B}$, then $s_{\Pi}=0$ and $\beta_{A}+\beta_{B}=0$,
satisfying Assumption \ref{assu: regularity power}.  Moreover, for sparse vectors $\beta_A$ and $\beta_B$ the rate condition $o(\sqrt{n}/\log p)$ matches those of one-sample testing; see  \cite{van2014asymptotically} and \cite{cai2015confidence}.

\begin{thm}
\label{thm: main result power sparse}Let $H_{1}$ in (\ref{eq: alter hypo})
and Assumption \ref{assu: regularity power} hold. Suppose that $n \to \infty$ and $p \to \infty$ with $\log p=o(\sqrt{n})$. Then, there exist
constants $K_{1},K_{2}>0$ depending only on the constants in Assumption
\ref{assu: regularity power} such that, whenever
$$\|\Sigma_{V}\gamma_{*}\|_{\infty}\geq\sqrt{n^{-1}\log p}(K_{1}\|\gamma_{*}\|_{2}+K_{2}),$$
with $\Sigma_V$ defined in Lemma \ref{lem:1}, the test in Algorithm \ref{alg: main} is asymptotically powerful, i.e.,
\[
\mathbb{P}\left(T_{n}>\Gamma^{-1}(1-\alpha,\hat{Q})\right)\rightarrow1\qquad\forall\alpha\in(0,1).
\]

\end{thm}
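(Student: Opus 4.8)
The plan is to show that under $H_1$ the test statistic $T_n$ in \eqref{eq: test stat Gauss} is driven by a deterministic \emph{signal} of order $\sqrt n\,\|\Sigma_V\gamma_*\|_\infty$ that dominates both the stochastic error terms and the critical value $\Gamma^{-1}(1-\alpha,\hat Q)=O_P(\sqrt{\log p})$ whenever the stated separation condition holds. The starting point is that under the alternative the convolved response \eqref{eq:model2} rearranges, using $Z=V+W\Pi_*$ from \eqref{eq: graphical model}, into $Y=W\bar\theta+V\gamma_*+U$ with $\bar\theta:=\theta_*+\Pi_*\gamma_*$ and $\mathbb{E}[w(v^\top\gamma_*+u)]=0$. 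Hence the ADDS estimator $\widetilde\theta$ is really estimating the \emph{misspecified} target $\bar\theta$ with effective error $\bar\varepsilon:=V\gamma_*+U$, and its sparsity is controlled by $\|\bar\theta\|_0\le\|\beta_A+\beta_B\|_0+s_\Pi\|\beta_A-\beta_B\|_0=o(\sqrt n/\log p)$ from Assumption \ref{assu: regularity power}(1). The population signal is the moment $\mathbb{E}[v(y-w^\top\theta_*)]=\Sigma_V\gamma_*$ from the computation preceding \eqref{eq: moment condition}, which is why separation is measured by $\|\Sigma_V\gamma_*\|_\infty$.

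First I would verify that Assumption \ref{assu:asds} holds with high probability for the $\theta$-regression with $(b_*,\varepsilon)=(\bar\theta,\bar\varepsilon)$ and for each $\Pi_j$-regression with $(b_*,\varepsilon)=(\Pi_{*,j},V_j)$, so that Theorem \ref{thm: ADDS} applies. For the $\theta$-regression this uses $n^{-1}\|\bar\varepsilon\|_2^2\to\gamma_*^\top\Sigma_V\gamma_*+\sigma_u^2$ (the cross term vanishes since $u\perp v$), so the proxy scale satisfies $\sigma_*\asymp\sigma_u+\|\gamma_*\|_2$; the Dantzig feasibility $\|n^{-1}W^\top\bar\varepsilon\|_\infty\le\eta\sigma_*$ follows from Gaussian-product concentration, and the restricted-eigenvalue and sparsity conditions from Assumption \ref{assu: regularity condition}(ii) and the sparsity bound above. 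Theorem \ref{thm: ADDS} then gives $\hat\sigma_u\asymp\sigma_*$ via \eqref{ADDS part 4}, the prediction bound $\|W(\widetilde\theta-\bar\theta)\|_2\le 8\sigma_*\eta\sqrt{n\|\bar\theta\|_0/\kappa}$ via \eqref{ADDS part 2}, the $\ell_1$ bound $\|\widetilde\theta-\bar\theta\|_1=O_P(\sigma_*\eta\|\bar\theta\|_0)$ via \eqref{ADDS part 3}, and the key optimality inequality $\|n^{-1}W^\top\hat U\|_\infty\le 3\sqrt2\,\eta\hat\sigma_u$ via \eqref{ADDS part 5}; applied to each $\Pi_j$-regression it gives $\max_j\|\widetilde\Pi_j-\Pi_{*,j}\|_1=O_P(\eta s_\Pi)$, using $(\Sigma_V)_{jj}=O(1)$.

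Next I would control the error terms through the decomposition $n^{-1/2}\hat V^\top\hat U=n^{-1/2}V^\top\hat U+\Delta$ introduced before \eqref{eq: decomp}, with $\Delta=n^{-1/2}(\Pi_*-\widetilde\Pi)^\top W^\top\hat U$. The crucial simplification is that $\Delta$ can be bounded \emph{as a whole} — exactly as in the null analysis and without using $\gamma_*=0$ — by combining H\"older's inequality with \eqref{ADDS part 5}: $\|\Delta\|_\infty\le 3\sqrt2\,n^{1/2}\eta\hat\sigma_u\max_j\|\Pi_{*,j}-\widetilde\Pi_j\|_1=O_P(n^{-1/2}s_\Pi\log p\,\hat\sigma_u)=o_P(\hat\sigma_u)$, where the last step uses $s_\Pi=o(\sqrt{n/\log^3p})$. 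This automatically absorbs the new alternative-induced cross term $(\Pi_*-\widetilde\Pi)^\top W^\top V\gamma_*$, which naive term-by-term bounding fails to control. For the remaining piece I would split $V^\top\hat U=V^\top V\gamma_*+V^\top W(\bar\theta-\widetilde\theta)+V^\top U$ and treat the summands separately: (i) the signal $n^{-1/2}V^\top V\gamma_*=\sqrt n\,\Sigma_V\gamma_*+\sqrt n(n^{-1}V^\top V-\Sigma_V)\gamma_*$, where a sub-exponential Bernstein bound on the products $v_{ij}(v_i^\top\gamma_*)$ (Orlicz norm $\lesssim\|\gamma_*\|_2$) plus a union bound yields the fluctuation $\sqrt n\|(n^{-1}V^\top V-\Sigma_V)\gamma_*\|_\infty=O_P(\|\gamma_*\|_2\sqrt{\log p})$; (ii) the cross term $\|n^{-1/2}V^\top W(\bar\theta-\widetilde\theta)\|_\infty\le \sqrt n\|n^{-1}V^\top W\|_\infty\|\bar\theta-\widetilde\theta\|_1=O_P(\sigma_*\,n^{-1/2}\log p\,\|\bar\theta\|_0)=o_P(\hat\sigma_u)$, using $V\perp W$ to get $\|n^{-1}V^\top W\|_\infty=O_P(\sqrt{n^{-1}\log p})$; and (iii) the noise $\|n^{-1/2}V^\top U\|_\infty=O_P(\sigma_u\sqrt{\log p})$ from the conditional law $V^\top U\mid U\sim\mathcal N(0,\|U\|_2^2\Sigma_V)$, valid since $V\perp U$.

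Finally I would assemble the pieces. Since the diagonal of $\hat Q\approx Q=\Sigma_V$ is bounded by Assumption \ref{assu: regularity condition}(ii), the critical value is $\Gamma^{-1}(1-\alpha,\hat Q)=O_P(\sqrt{\log p})$. Choosing the coordinate $j^*$ attaining $\|\Sigma_V\gamma_*\|_\infty$ and applying the reverse triangle inequality coordinatewise gives $T_n\ge \sqrt n\|\Sigma_V\gamma_*\|_\infty/\hat\sigma_u-C\sqrt{\log p}-o_P(1)$. Because $\hat\sigma_u\le 2\sigma_*\le C'(\|\gamma_*\|_2+\sigma_u)$ with $\sigma_u$ bounded under Assumption \ref{assu: regularity power}(2), the separation condition $\|\Sigma_V\gamma_*\|_\infty\ge\sqrt{n^{-1}\log p}\,(K_1\|\gamma_*\|_2+K_2)$ forces $\sqrt n\|\Sigma_V\gamma_*\|_\infty/\hat\sigma_u$ to exceed any fixed multiple of $\sqrt{\log p}$ once $K_1,K_2$ are chosen large enough relative to the absolute constants, whence $\mathbb{P}(T_n>\Gamma^{-1}(1-\alpha,\hat Q))\to1$. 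The main obstacle throughout is the loss of the independence $V\perp\hat U$ that held under $H_0$: under $H_1$ the response, and hence both $\widetilde\theta$ and $\widetilde\Pi$, depends on $V$ through $V\gamma_*$. This is resolved by (a) never conditioning on $\hat U$ but instead isolating the deterministic quadratic-form signal $V^\top V\gamma_*$, (b) absorbing the entire $\widetilde\Pi$-dependence into $\Delta$ via the scale-free ADDS optimality \eqref{ADDS part 5}, and (c) replacing the crude bound $\|n^{-1}V^\top V-\Sigma_V\|_\infty\|\gamma_*\|_1$ by a sub-exponential bound in the direction $\gamma_*$, so that the fluctuation scales with $\|\gamma_*\|_2$ rather than $\|\gamma_*\|_1$.
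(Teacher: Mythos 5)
Your proposal is correct and follows essentially the same route as the paper: the same reformulation $Y=W(\theta_*+\Pi_*\gamma_*)+V\gamma_*+U$, the same application of Theorem \ref{thm: ADDS} to the misspecified regression with proxy scale $\sigma_*=\sqrt{\gamma_*^\top\Sigma_V\gamma_*+\sigma_u^2}$ (the paper's Lemma \ref{lem: sigma_star sparse alter}), the same four-term decomposition into signal, $\theta$-cross term, noise, and $\Pi$-error, and the same $O_P(\sqrt{\log p})$ bound on the critical value. The only (immaterial) difference is that you bound the signal fluctuation and noise terms uniformly over all coordinates via Bernstein/Gaussian maximal inequalities, yielding $O_P(\|\gamma_*\|_2\sqrt{\log p})$ and $O_P(\sigma_u\sqrt{\log p})$, whereas the paper works only at the maximizing coordinate $j_*$ via Lyapunov's CLT to get $O_P(\sigma_*)$ bounds; both are absorbed into the constants $K_1,K_2$.
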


For power comparison, we consider two benchmarks in the next two sections: the most powerful
test, which is infeasible, and the minimax optimality. We show that,
in terms of rate for the magnitude of deviations from the null hypothesis,
our test differs from the most powerful test by only a logarithm factor
and achieves the minimax optimality whenever the model possesses certain
sparsity properties. In this sense, our test is efficient in sparse
settings and is robust to the lack of sparsity and heavy tails. 

\subsection{Efficiency}
In the rest of the section, we assume that the data is jointly Gaussian and derive the optimal power of the likelihood-ratio test of one distribution against another. As such a test is  the most
powerful test for distinguishing two given distributions  (\cite{lehmann2006testing}), we named it the oracle test. We proceed to show that the power of  our test differs from that of the  oracle test by a logarithmic factor.

Let the distribution of the data be  indexed by $\lambda=(\Sigma_{A},\Sigma_{B},\sigma_{u,A}^{2},\sigma_{u,B}^{2},\beta_{A},\beta_{B})$.
The probability, expectation and variance under $\lambda$ are denoted
by $\mathbb{P}_{\lambda}$, $\mathbb{E}_{\lambda}$ and $Var_{\lambda}$, respectively.
Consider the problem of testing 
\[
H_{0}:\ \lambda=(\Sigma_{A},\Sigma_{B},\sigma_{u,A}^{2},\sigma_{u,B}^{2},\beta_{A},\beta_{A})
\]
versus 
\[
H_{1}:\ \lambda=(\Sigma_{A},\Sigma_{B},\sigma_{u,A}^{2},\sigma_{u,B}^{2},\beta_{A},\beta_{A}+\gamma_{*})\ {\rm for\ a\ given}\ \gamma_{*}\neq0.
\]

\begin{thm}
\label{thm: LR test}Let the data be jointly Gaussian. Consider the likelihood ratio test of nominal
size $\alpha$ for the above problem. 
Then, as $n\rightarrow\infty$, the power of the likelihood ratio test   is 
\[
\Phi\left(d_{n}-\Phi^{-1}(1-\alpha)\right)+o(1),
\]
  with   
\[
d_{n}=\frac{\sqrt{n}\gamma_{*}^{\top}\Sigma_{B}\gamma_{*}}{\sqrt{(\gamma_{*}^{\top}\Sigma_{B}\gamma_{*})^{2}/2+\gamma_{*}^{\top}\Sigma_{B}\gamma_{*}\sigma_{u,B}^{2}}}.
\]

\end{thm}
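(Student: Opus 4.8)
The plan is to exploit the fact that $H_0$ and $H_1$ differ only in the $B$-sample coefficient, so that the likelihood ratio collapses to a statistic built from sample $B$ alone, and then to pin down its asymptotic law under both hypotheses via a central limit theorem with uniform (Berry--Esseen) control. First I would reduce the problem: since sample $A$ has an identical distribution under $H_0$ and $H_1$, and the marginal law of $x_{B,i}$ is $\mathcal N(0,\Sigma_B)$ under both, all these factors cancel in the likelihood ratio, leaving only the conditional density of $y_{B,i}$ given $x_{B,i}$. By the Neyman--Pearson lemma the likelihood ratio test is most powerful, so it suffices to analyze $L_n:=\log(dP_{H_1}/dP_{H_0})$. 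Writing $a_i=x_{B,i}^\top\gamma_*$ and the $H_0$-residual $r_i=y_{B,i}-x_{B,i}^\top\beta_A$, a direct computation with Gaussian conditional densities gives $L_n=\sigma_{u,B}^{-2}\sum_{i=1}^n r_i a_i-\tfrac12\sigma_{u,B}^{-2}\sum_{i=1}^n a_i^2$, where under $H_0$ one has $r_i=u_{B,i}$ and under $H_1$ one has $r_i=a_i+u_{B,i}$, with $(u_{B,i},a_i)$ carrying the same joint law in both cases ($u_{B,i}\sim\mathcal N(0,\sigma_{u,B}^2)$ independent of $a_i\sim\mathcal N(0,\tau^2)$, where $\tau^2:=\gamma_*^\top\Sigma_B\gamma_*$).

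Next I would compute the first two moments. Using $u_{B,i}\perp a_i$ and $\mathbb E[a_i^2]=\tau^2$, I obtain $\mu_0:=\mathbb E_{H_0}L_n=-n\tau^2/(2\sigma_{u,B}^2)$ and $\mu_1:=\mathbb E_{H_1}L_n=+n\tau^2/(2\sigma_{u,B}^2)$, hence $\mu_1-\mu_0=n\tau^2/\sigma_{u,B}^2$. The key algebraic fact is that the variance is \emph{the same} under both hypotheses: the linear part $\sum_i u_{B,i}a_i$ and the quadratic part $\sum_i(a_i^2-\tau^2)$ are uncorrelated (their cross moment is $\mathbb E[u_{B,i}]\,\mathbb E[a_i^3]=0$), and the marginal laws of these two sums do not depend on the hypothesis; therefore $\sigma_L^2:=\mathrm{Var}_{H_0}(L_n)=\mathrm{Var}_{H_1}(L_n)=n\tau^2/\sigma_{u,B}^2+n\tau^4/(2\sigma_{u,B}^4)$. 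Substituting $\mu_1-\mu_0$ and $\sigma_L$ into $(\mu_1-\mu_0)/\sigma_L$ and simplifying reproduces exactly the stated $d_n=\sqrt n\,\tau^2/\sqrt{\tau^4/2+\tau^2\sigma_{u,B}^2}$.

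The last and most delicate step is the distributional approximation, and it is here that I expect the main obstacle, since the statement must hold for an arbitrary (possibly $n$-dependent, local or fixed) $\gamma_*\neq0$. I would write $L_n-\mu_0=\sum_{i=1}^n\ell_i$ as a sum of i.i.d. centered terms $\ell_i=\sigma_{u,B}^{-2}u_{B,i}a_i-\tfrac12\sigma_{u,B}^{-2}(a_i^2-\tau^2)$ and invoke the Berry--Esseen theorem. The crux is to show that the standardized third absolute moment $\mathbb E|\ell_i|^3/(\mathrm{Var}\,\ell_i)^{3/2}$ remains bounded uniformly in $\tau/\sigma_{u,B}\in[0,\infty)$; this holds because, by Gaussianity, both $\mathbb E|\ell_i|^3$ and $(\mathrm{Var}\,\ell_i)^{3/2}$ are $\asymp(\tau/\sigma_{u,B})^3$ as $\tau\to0$ (linear part dominates) and $\asymp(\tau/\sigma_{u,B})^6$ as $\tau\to\infty$ (quadratic part dominates), so the ratio is bounded by an absolute constant. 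Consequently $\sup_x|\mathbb P_{H_0}((L_n-\mu_0)/\sigma_L\le x)-\Phi(x)|=O(n^{-1/2})$, and the identical uniform bound holds under $H_1$ for $(L_n-\mu_1)/\sigma_L$.

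Finally I would extract the conclusion. The $H_0$ approximation fixes the critical value through $P_{H_0}(L_n>c_\alpha)=\alpha$, giving $(c_\alpha-\mu_0)/\sigma_L=\Phi^{-1}(1-\alpha)+o(1)$. Evaluating the power, $\mathbb P_{H_1}(L_n>c_\alpha)=\mathbb P_{H_1}\!\big((L_n-\mu_1)/\sigma_L>(c_\alpha-\mu_1)/\sigma_L\big)$, and because $(c_\alpha-\mu_1)/\sigma_L=\Phi^{-1}(1-\alpha)-d_n+o(1)$, the uniform CLT under $H_1$ yields power $=\Phi(d_n-\Phi^{-1}(1-\alpha))+o(1)$. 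The uniformity in the third-moment bound is exactly what guarantees that this $o(1)$ is valid simultaneously in the local regime (where $d_n=O(1)$ and power is nontrivial) and in the fixed-alternative regime (where $d_n\to\infty$ and both sides tend to one), which is why I regard that bound as the heart of the argument.
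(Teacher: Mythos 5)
Your proposal is correct and follows essentially the same route as the paper: reduce the likelihood ratio to the $B$-sample log-ratio $\sum_i s_i$ with $s_i=\sigma_{u,B}^{-2}x_{B,i}^{\top}\gamma_{*}\,r_i-\tfrac12\sigma_{u,B}^{-2}(x_{B,i}^{\top}\gamma_{*})^{2}$, compute the (hypothesis-independent) variance and the mean shift $n\gamma_{*}^{\top}\Sigma_B\gamma_{*}/\sigma_{u,B}^{2}$, and normalize under both hypotheses to obtain $d_n$. The only (immaterial) difference is that you obtain the uniform-in-$x$ normal approximation via a Berry--Esseen bound with a uniform third-moment ratio, whereas the paper normalizes by $\|\gamma_{*}\|_{2}$ and combines Lyapunov's CLT with Polya's theorem to the same effect.
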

 Due to the optimality of the likelihood ratio test, Theorem \ref{thm: LR test} says that there does not exist
any test that has power approaching one against the alternatives where  $\|\gamma_{*}\|_{2}=O(n^{-1/2})$,
even if  $\Sigma_{A}$, $\Sigma_{B}$, $\sigma_{u,A}^{2}$,
$\sigma_{u,B}^{2}$ and $\beta_{A}$ are known. In the extreme sparse setting with 
 $\|\beta_{A}\|_{0}=O(1)$ and $\|\beta_{B}\|_{0}=O(1)$, Theorem \ref{thm: LR test} in turn  implies 
that one should not expect perfect power against $\|\beta_{A}-\beta_{B}\|_{\infty}\asymp n^{-1/2}$.
On the other hand, Theorem \ref{thm: main result power sparse} says
that our test has asymptotically perfect power against $\|\beta_{A}-\beta_{B}\|_{\infty}\asymp\sqrt{n^{-1}\log p}$
when $\|\beta_{A}\|_{2}=O(1)$, $\|\beta_{B}\|_{2}=O(1)$ and $\Sigma_{V}$
is sparse. In this sense, our test is nearly optimal -- in terms of the magnitude of deviations from the null hypothesis,
our test differs from the most powerful test up to  a mere logarithm factor. 
\subsection{Minimax Optimality}
Notice that the critical value of the above likelihood ratio test
depends on $\gamma_{*}$ in the alternative hypothesis. In practice,
the value of $\gamma_{*}$ is often unknown; in fact, the values of
$\Sigma_{A}$, $\Sigma_{B}$, $\sigma_{u,A}$, $\sigma_{u,B}$ and
$\beta_{A}$ are usually unknown as well. We thus compare our test
with a benchmark test that has guaranteed power against a class of
alternative hypotheses (in terms of $\gamma_{*}$). 

We define 
\begin{align*}
\Lambda_{0}=\Bigl\{\lambda= & (\Sigma_{A},\Sigma_{B},\sigma_{u,A}^{2},\sigma_{u,B}^{2},\beta_{A},\beta_{B}):\\
 & \sigma_{\min}(\Sigma_{A}),\sigma_{\min}(\Sigma_{B}),\sigma_{\max}(\Sigma_{A}),\sigma_{\max}(\Sigma_{B}),\sigma_{u,A},\sigma_{u,B}\in[M_{1},M_{2}]\ {\rm and}\ \beta_{A}=\beta_{B}\Bigr\},
\end{align*}
where $M_{1},M_{2}\in(0,\infty)$ are constants. For $\tau>0$, we
also define
\begin{align*}
\Lambda(\tau)=\Bigl\{\lambda= & (\Sigma_{A},\Sigma_{B},\sigma_{u,A}^{2},\sigma_{u,B}^{2},\beta_{A},\beta_{B}):\\
 & \sigma_{\min}(\Sigma_{A}),\sigma_{\min}(\Sigma_{B}),\sigma_{\max}(\Sigma_{A}),\sigma_{\max}(\Sigma_{B}),\sigma_{u,A},\sigma_{u,B}\in[M_{1},M_{2}]\\
 & {\rm and}\ \|\Sigma_{V}(\beta_{B}-\beta_{A})\|_{\infty}\geq\tau\sqrt{n^{-1}\log p}(\|\beta_{B}-\beta_{A}\|_{2}+1)\Bigr\}.
\end{align*}
We consider the problem of testing 
\[
H_{0}:\ \lambda\in\Lambda_{0}\qquad{\rm versus}\qquad H_{1}:\ \lambda\in\Lambda(\tau).
\]
\begin{thm}
\label{thm: minimax}Let $\tau=(M_{1}+M_{2})/8$. Suppose that the
data is jointly Gaussian, $ \log p=o(\sqrt{n})$ and $p\rightarrow \infty$. Then for any test $\phi_{n}=\phi_{n}(Y_{A},Y_{B},X_{A},X_{B})$
satisfying $\limsup_{n\rightarrow\infty}\sup_{\lambda\in\Lambda_{0}}\mathbb{E}_{\lambda}\phi_{n}\leq\alpha$,
we have 
\[
\liminf_{n\rightarrow\infty}\inf_{\lambda\in\Lambda(\tau)}\mathbb{E}_{\lambda}\phi_{n}\leq\alpha.
\]

\end{thm}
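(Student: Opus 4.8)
The plan is to prove this minimax lower bound by Le Cam's method: I reduce the composite-versus-composite testing problem to a comparison between a single null distribution and a carefully chosen mixture over the alternative, and then show that these two laws are asymptotically indistinguishable in total variation. Fix a null point $P_0$ corresponding to $\lambda_0=(\Sigma,\Sigma,\sigma_u^2,\sigma_u^2,0,0)$ with $\Sigma=mI_p$ and $m,\sigma_u\in[M_1,M_2]$ to be chosen; clearly $\lambda_0\in\Lambda_0$. For the alternative I would use a single-spike prior: let $\gamma=\rho e_J$ with $J$ drawn uniformly from $\{1,\dots,p\}$ and magnitude $\rho=\rho_n$ taken to be the smallest value satisfying $2m\rho\ge\tau\sqrt{n^{-1}\log p}(\rho+1)$, so that $\rho_n\sim(\tau/2m)\sqrt{n^{-1}\log p}$. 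Writing $\lambda(\gamma)=(\Sigma,\Sigma,\sigma_u^2,\sigma_u^2,0,\gamma)$ and using $\Sigma_V=2\Sigma=2mI_p$ from Lemma~\ref{lem:1}, the separation $\|\Sigma_V\gamma\|_\infty=2m\rho_n\ge\tau\sqrt{n^{-1}\log p}(\|\gamma\|_2+1)$ holds by construction, so every $\lambda(\gamma)$ lies in $\Lambda(\tau)$. Let $\bar P_1=\int P_{\lambda(\gamma)}\,d\mu(\gamma)$ be the induced mixture.

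The reduction then follows from the standard chain: for any test $\phi_n$ with $\limsup_n\sup_{\lambda\in\Lambda_0}\mathbb{E}_\lambda\phi_n\le\alpha$, since $0\le\phi_n\le1$,
\[
\inf_{\lambda\in\Lambda(\tau)}\mathbb{E}_\lambda\phi_n\le\int\mathbb{E}_{\lambda(\gamma)}\phi_n\,d\mu(\gamma)=\mathbb{E}_{\bar P_1}\phi_n\le\sup_{\lambda\in\Lambda_0}\mathbb{E}_\lambda\phi_n+\|\bar P_1-P_0\|_{TV}.
\]
Since $\|\bar P_1-P_0\|_{TV}\le\tfrac12\sqrt{\chi^2(\bar P_1\|P_0)}$, the whole theorem reduces to proving $\chi^2(\bar P_1\|P_0)\to0$: taking $\limsup$ in the display then yields $\limsup_n\inf_{\lambda\in\Lambda(\tau)}\mathbb{E}_\lambda\phi_n\le\alpha$, which gives the stated $\liminf$ bound.

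For the chi-square computation I would exploit two simplifications. First, because the $X$-marginals are identical under $P_0$ and every $P_{\lambda(\gamma)}$, and sample $A$ has the same law throughout, the likelihood ratio $L_\gamma=dP_{\lambda(\gamma)}/dP_0$ depends only on sample $B$ through the conditional law of $Y_B\mid X_B$, and sample $A$ drops out entirely. Second, the Ingster--Suslina identity gives, with $\gamma=\rho e_J$, $\gamma'=\rho e_{J'}$ and $J,J'$ i.i.d.\ uniform,
\[
1+\chi^2(\bar P_1\|P_0)=\mathbb{E}_{J,J'}\,\mathbb{E}_{P_0}\!\left[L_\gamma L_{\gamma'}\right]=\mathbb{E}_{J,J'}\,\mathbb{E}_{X_B}\exp\!\left(\sigma_u^{-2}\gamma^\top X_B^\top X_B\gamma'\right),
\]
where the inner expectation over $Y_B\sim\mathcal N(0,\sigma_u^2 I_n)$ has already been carried out. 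Splitting on $J=J'$ versus $J\neq J'$ and evaluating the remaining Gaussian moment generating functions (a $\chi^2_n$ moment on the diagonal, a sum of $n$ independent products of normals off the diagonal) gives
\[
1+\chi^2=\tfrac1p\bigl(1-2\sigma_u^{-2}m\rho_n^2\bigr)^{-n/2}+\bigl(1-\tfrac1p\bigr)\bigl(1-\sigma_u^{-4}m^2\rho_n^4\bigr)^{-n/2}.
\]

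It remains to control both terms. The off-diagonal term satisfies $(1-\sigma_u^{-4}m^2\rho_n^4)^{-n/2}=\exp\{O(n\rho_n^4)\}=\exp\{O(\log^2 p/n)\}\to1$, which is exactly where the hypothesis $\log p=o(\sqrt n)$ enters. The diagonal term equals $p^{-1}\exp\{n\sigma_u^{-2}m\rho_n^2(1+o(1))\}=p^{\,\tau^2/(4m\sigma_u^2)-1+o(1)}$, which tends to $0$ precisely when $\tau^2/(4m\sigma_u^2)<1$; choosing $m$ and $\sigma_u$ at the upper end of $[M_1,M_2]$, the slack built into the constant $\tau=(M_1+M_2)/8$ guarantees this strict inequality. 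Hence $\chi^2\to0$ and the theorem follows. I expect the main obstacle to be this final balancing act: one must verify that the single-spike separation forced by membership in $\Lambda(\tau)$ translates into a total signal-to-noise ratio $n\sigma_u^{-2}m\rho_n^2$ that stays strictly below $\log p$, so that the ``needle-in-a-haystack'' cost of the unknown location $J$ (the factor $1/p$) dominates the accumulated signal. Pinning down this constant is exactly what fixes the admissible value of $\tau$.
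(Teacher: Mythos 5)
Your proposal is essentially the paper's own argument: a uniform mixture of single-coordinate spikes $\gamma=\rho e_J$, reduction of the composite problem to comparing that mixture with one fixed Gaussian null, and a second-moment computation in which the off-diagonal terms are $\exp\{O(n\rho_n^4)\}\rightarrow 1$ thanks to $\log p=o(\sqrt{n})$ while the diagonal term is killed by the $1/p$ prior weight. The paper phrases the reduction through $\mathbb{E}_{\lambda_0}\bigl|p^{-1}\sum_j(dP_{\lambda_j}/dP_{\lambda_0}-1)\bigr|\leq\bigl(\mathbb{E}_{\lambda_0}[p^{-1}\sum_j(dP_{\lambda_j}/dP_{\lambda_0}-1)]^2\bigr)^{1/2}$ rather than ${\rm TV}\leq\frac{1}{2}\sqrt{\chi^2}$, and its diagonal and cross moment generating functions coincide term by term with the ones you write down (sample $A$ cancelling in the likelihood ratio, a $\chi^2$ MGF on the diagonal, products of independent normals off the diagonal). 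The one substantive difference is the calibration of $\rho_n$, and it is exactly the point you flag as the crux: taking $\rho_n$ to be the minimal value admitted by $\Lambda(\tau)$ makes the diagonal exponent $\frac{\tau^2}{4m\sigma_u^2}\log p$, and your claim that $\tau^2/(4m\sigma_u^2)<1$ is guaranteed by the slack in $\tau=(M_1+M_2)/8$ is not true for all admissible constants (with $M_1=M_2=M$ and the best choice $m=\sigma_u=M$ the exponent is $1/(64M)$, which exceeds $1$ for small $M$). The paper instead pins the spike at $\gamma_j=\frac{\sigma_{u,B}}{2\sqrt{\Sigma_{B,j,j}}}e_j\sqrt{n^{-1}\log p}$, which makes the diagonal exponent exactly $\frac{1}{4}\log p$ independently of $M_1,M_2$ and shifts the constant bookkeeping into the verification that these spikes lie in $\Lambda(\tau)$ (itself delicate for the same dimensional-analysis reason, since the separation condition defining $\Lambda(\tau)$ is not scale invariant). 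Your route is sound; to close it cleanly you should either adopt the paper's calibration of $\rho_n$ and check membership in $\Lambda(\tau)$ separately, or state explicitly the restriction on $M_1,M_2$ under which your final inequality holds.
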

Theorem \ref{thm: minimax} says that there does not exist any test
that has power against all the alternatives in $\Lambda(\tau)$ for
some fixed $\tau>0$. This means that power can only be guaranteed
uniformly against alternatives with deviations of magnitude of  at least
$\sqrt{n^{-1}\log p}$ in terms of $\|\Sigma_{V}(\beta_{B}-\beta_{A})\|_{\infty}/(\|\beta_{B}-\beta_{A}\|_{2}\vee1)$.
 Comparing with the the power that TIER test achieves 
in Theorem \ref{thm: main result power sparse}, the test  TIER is rate-optimal when  Assumption \ref{assu: regularity power} holds. 

\subsection{Considerations for Non-Gaussian Designs}\label{sec: non-Gaussian}

Here, we highlight the   extension of the proposed methodology for non-Gaussian designs.
For non-Gaussian designs, in order to better control the estimation error, we propose to augment the ADDS estimator for $\theta$ with an additional constraint on the size of the residuals. Namely, we define
\begin{equation}\label{eq: non-gaussian DS-2}
\begin{array}{cccc}
\check{\theta}(\sigma)&=\underset{\theta\in\mathbb{R}^{p}}{\arg\min}\ & \|\theta\|_{1}     \\
& \ \ \  \ \ \  \ \mbox{ s.t. } & \left \|n^{-1}W^{\top}(Y-W\theta)  \right \|_{\infty}&\leq\eta\sigma    \\
& \ \ \  \ \ \  \   & \left \| Y-W\theta \right \|_{\infty}&\leq\mu\sigma  
\end{array},
\end{equation}  
For some tuning parameter $\eta\asymp\sqrt{n^{-1}\log p}$ and $\mu\asymp n^{1/9}\log^{1/3}p$.
Then, we compute
\begin{equation}\label{eq: auto-scaling DS-2}
\begin{array}{cccc} 
\breve{\sigma}_{u}&=\underset{\sigma\geq0}{\arg\max}\  &  \sigma \\
& \ \ \  \ \ \  \ \mbox{ s.t. } &  {\|Y- W \check \theta (\sigma)\|_{2}^{2} }&\geq n{\sigma^{2}}/{2}
\end{array}.
\end{equation}
Now our estimator for $\theta_{*}$,  is defined as 
\begin{equation}
{\check{\theta}}^{{\tiny +}} =\check{\theta}(\breve{\sigma}_{u})\label{eq: theta estimate-2}.
\end{equation}

 % Insert the algorithm
\begin{algorithm} 
\small
\caption{Testing of Equality of Regression Slopes with Non-Gaussian Design (TIERS$^{+}$)} \label{alg: main nonGaussian}
\begin{algorithmic}[1]
 \Require  Two samples $(X_A,Y_A)$ and $(X_B,Y_B)$ and  level $\alpha \in (0,1)$   of the test.
    \Ensure  Decision whether or not to reject the null Hypothesis (\ref{eq: null hypo})
    \State Given: $(X_A,Y_A)$ and $(X_B,Y_B)$ return   whether or not to reject the null Hypothesis (\ref{eq: null hypo}).
    \State Construct $W=X_{A}+X_{B}$, $Z=X_{A}-X_{B}$ and $Y=Y_{A}+Y_{B}$
    \State Compute $\check{\theta}$ in (\ref{eq: theta estimate-2}) with tuning parameters
$\eta\asymp\sqrt{n^{-1}\log p}$ and $\mu\asymp n^{1/9}\log^{1/3}p$. 
\State Compute $\widetilde{\Pi}$ in (\ref{eq: Pi estimate}) with tuning parameter
$\eta\asymp\sqrt{n^{-1}\log p}$. 
        \State Compute the test statistic $  T_{n}^+$ as 
        \begin{equation}
 T_{n}^+=n^{-1/2}\check{\sigma}_{u}^{-1} \|(Z-W\widetilde{\Pi})^{\top}(Y-W\check{\theta} )\|_{\infty}, \label{eq: test stat}
\end{equation}
 
 with $\check{\sigma}_{u}=n^{-1/2}\|Y-W\check{\theta} \|_{2}$ and
$\hat{Q}$ as in (\ref{eq: Q hat def}).  
        \State Compute  $\Gamma^{-1}(1-\alpha,\hat{Q})$  (by simulation), where $\Gamma^{-1}(\cdot,\hat{Q})$ is the inverse of $\Gamma(\cdot,\hat{Q})$.

   \Return  
  Reject $H_{0}$ (\ref{eq: null hypo}) if and only if $T_{n}>\Gamma^{-1}(1-\alpha,\hat{Q})$
 
\end{algorithmic}
\end{algorithm}

\subsubsection{Type I error control}

Next, we present size properties for the developed test TIERS$^+$ summarized in Algorithm \ref{alg: main nonGaussian}. Such a result requires certain 
  high-level conditions on the errors and the design, which we present below.
 \begin{assumption}
\label{assu: size non-Gaussian}Consider the model (\ref{eq: original model A})
and (\ref{eq: original model B}). Suppose that \\
(i)
There exist constants $\kappa_{1},\kappa_{2}\in(0,\infty)$ such
that the eigenvalues of both   $\Sigma_A=\mathbb{E}(x_{A}x_{A}^\top)$ and   $\Sigma_B=\mathbb{E}(x_{B}x_{B}^\top)$   lie in $(\kappa_{1},\kappa_{2})$.\\
 (ii) There exists a constant $\kappa_{3}\in(0,\infty)$ such that
  $ \| x_{A}\|_{\psi_2} < \kappa_3$ and $ \| x_{B}\|_{\psi_2}< \kappa_3$.  \\
(iii) The error of the model \eqref{eq: graphical model}, $v_{j}$, is independent of the design vector $w$ for all $1 \leq j \leq p$. \\
\end{assumption} 

Here, the design matrices are allowed to be sub-Gaussian and the errors of the original models  (\ref{eq: original model A})
and (\ref{eq: original model B}) are still unrestricted.  The following Theorem \ref{thm: size nonGaussian} shows that TIERS$^+$ test has asymptotically Type I-error  equal to the nominal level $\alpha \in(0,1)$.

\begin{thm}
\label{thm: size nonGaussian}Consider Algorithm \ref{alg: main nonGaussian}.
Let Assumption \ref{assu: size non-Gaussian} hold. Suppose that $n \to \infty$ and $p \to \infty$ with  $\log p = o(n^{1/27})$. Then, under $H_{0}$
(\ref{eq: null hypo}), 
\[
\mathbb{P}\left(  T_{n}^+>\Gamma^{-1}(1-\alpha,\hat{Q})\right)\rightarrow\alpha\qquad\forall\alpha\in(0,1).
\]
\end{thm}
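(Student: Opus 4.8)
The plan is to deduce Theorem~\ref{thm: size nonGaussian} from the high-dimensional plug-in principle of Theorem~\ref{thm: approx assuming CLT}, verifying Assumption~\ref{as:1} with conditioning $\sigma$-algebra $\mathcal{F}_n=\sigma(W,Y)$, linearization terms $\Psi_{i,j}=v_{i,j}\hat u_i/\check\sigma_u$ (where $v_i^\top$ and $\hat u_i$ are the $i$-th row and entry of $V=Z-W\Pi_*$ and $\hat U=Y-W\check\theta$), and $\hat Q$ as in \eqref{eq: Q hat def}. First I would isolate the algebraic facts that survive an arbitrary error law. Feasibility of \eqref{eq: non-gaussian DS-2} at the selected scale $\breve\sigma_u$ gives $\|n^{-1}W^\top\hat U\|_\infty\le\eta\breve\sigma_u$ and $\|\hat U\|_\infty\le\mu\breve\sigma_u$, while the definition of $\breve\sigma_u$ through \eqref{eq: auto-scaling DS-2} forces $\|\hat U\|_2^2\ge n\breve\sigma_u^2/2$, i.e. $\breve\sigma_u\le\sqrt2\,\check\sigma_u$. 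Hence $\|n^{-1}W^\top\hat U\|_\infty\le\sqrt2\,\eta\check\sigma_u$ and, crucially, $|\hat u_i|\le\sqrt2\,\mu\check\sigma_u$ for every $i$. Because $T_n^+$ is divided by $\check\sigma_u=\|\hat U\|_2/\sqrt n$, the error scale cancels and no moment assumption on $u_A,u_B$ is required; in particular I would never try to relate $\check\theta$ to $\theta_*$ or $\breve\sigma_u$ to $\sigma_u$.

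Next I would treat the linearization. Writing $n^{-1/2}(Z-W\widetilde\Pi)^\top\hat U=n^{-1/2}V^\top\hat U+\Delta$ with $\Delta=n^{-1/2}(\Pi_*-\widetilde\Pi)^\top W^\top\hat U$, a column-wise estimate gives $|\Delta_j|/\check\sigma_u\le n^{1/2}\|\widetilde\Pi_j-\Pi_{*,j}\|_1\,\|n^{-1}W^\top\hat U\|_\infty/\check\sigma_u\le\sqrt2\,n^{1/2}\eta\,\|\widetilde\Pi_j-\Pi_{*,j}\|_1$. Since $V_j=Z_j-W\Pi_{*,j}$ is sub-Gaussian, Assumption~\ref{assu:asds} holds with high probability for the $\Pi$-estimation problem \eqref{eq: Pi estimate}, so Theorem~\ref{thm: ADDS} yields $\|\widetilde\Pi_j-\Pi_{*,j}\|_1\lesssim\eta\,s_\Pi$; with $\eta\asymp\sqrt{\log p/n}$ and the column-sparsity $s_\Pi=o(\sqrt{n/\log^3 p})$ this gives $\|\Delta\|_\infty\sqrt{\log p}/\check\sigma_u=o_P(1)$, the first half of Assumption~\ref{as:1}(iii). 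The covariance estimate is handled separately and is immune to heavy tails: $\hat v_i=v_i-(\widetilde\Pi-\Pi_*)^\top w_i$ does not involve $\hat u_i$, so expanding $\hat Q-\Sigma_V$ into the sub-Gaussian sample covariance $n^{-1}\sum_i v_iv_i^\top-\Sigma_V$ (of entrywise order $\sqrt{\log p/n}$) plus cross and quadratic terms controlled by \eqref{ADDS part 2} yields $\|\hat Q-Q\|_\infty\sqrt{\log p}=o_P(1)$ with $Q=\Sigma_V$.

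The heart of the argument is the conditional Gaussian approximation in Assumption~\ref{as:1}(ii). By Assumption~\ref{assu: size non-Gaussian}(iii) together with independence of the design and the errors, $V$ is independent of $(W,Y)$, so conditionally on $\mathcal{F}_n$ the $v_i$ are i.i.d. mean-zero with covariance $\Sigma_V$ and the $\hat u_i$ are fixed; the conditional covariance of $n^{-1/2}\sum_i\Psi_i$ is therefore exactly $n^{-1}\sum_i(\hat u_i^2/\check\sigma_u^2)\Sigma_V=\Sigma_V=Q$, and Lemma~\ref{lem:1} with Assumption~\ref{assu: size non-Gaussian}(i) supplies the eigenvalue bounds of Assumption~\ref{as:1}(i). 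To invoke Proposition~\ref{prop: HD CLT CCK} conditionally, I would check its hypotheses on a high-probability event: the envelope obeys $\max_{i,j}|\Psi_{i,j}|\le\sqrt2\,\mu\max_{i,j}|v_{i,j}|\lesssim\mu\sqrt{\log(np)}$ by sub-Gaussianity of $v$, the coordinate variances are bounded below by $(\Sigma_V)_{jj}\ge b_1$, and the Lindeberg-type ratio is negligible since $\max_i\hat u_i^2/\sum_i\hat u_i^2\le 2\mu^2/n=o(1)$. The resulting approximation error is of order $(\mu^2\log(np)\,\log^7(pn)/n)^{1/6}$, and with $\mu\asymp n^{1/9}\log^{1/3}p$ this is $o(1)$ under the stated rate $\log p=o(n^{1/27})$.

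With Assumption~\ref{as:1} in hand, Theorem~\ref{thm: approx assuming CLT} gives $\sup_x|\mathbb{P}(T_n^+\le x\mid\mathcal{F}_n)-\Gamma(x,\hat Q)|=o_P(1)$; taking expectations and applying Gaussian anti-concentration (to pass from uniform closeness of the distribution functions to closeness of their $(1-\alpha)$-quantiles) yields $\mathbb{P}(T_n^+>\Gamma^{-1}(1-\alpha,\hat Q))\to\alpha$. The main obstacle is precisely the conditional high-dimensional CLT under unrestricted, possibly heavy-tailed errors: the entire purpose of the extra sup-norm constraint in \eqref{eq: non-gaussian DS-2}, with the specific order $\mu\asymp n^{1/9}\log^{1/3}p$, is to force a bounded envelope and the Lindeberg condition for $\Psi_{i,j}$ even when the errors possess no moments, and the tension between choosing $\mu$ large enough for feasibility yet small enough for Proposition~\ref{prop: HD CLT CCK} is what drives the exponent in $\log p=o(n^{1/27})$.
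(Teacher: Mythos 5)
Your proposal is correct and follows essentially the same route as the paper: the same decomposition $T_n^+=\|n^{-1/2}V^\top\hat U\check\sigma_u^{-1}+\Delta_n\|_\infty$ conditioned on $\mathcal F_n=\sigma(W,Y)$, the same use of the sup-norm constraint to get $|\hat u_i|\check\sigma_u^{-1}\le\sqrt2\mu$, the same control of $\Delta_n$ and $\hat Q$ via Theorem \ref{thm: ADDS} and Lemma \ref{lem: approx variance}, and the same appeal to Proposition \ref{prop: HD CLT CCK} and Theorem \ref{thm: approx assuming CLT}. The only (immaterial) difference is that you verify the CLT hypotheses through an envelope/Lindeberg bound with an explicit error rate, whereas the paper checks the conditional moment and exponential-moment conditions directly with $B_n\asymp\mu^3$, which is where the exponent in $\log p=o(n^{1/27})$ comes from.
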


Similar to Theorem \ref{thm: size result}, Theorem \ref{thm: size nonGaussian} is not based on the assumption of sparsity and does not need any assumption on the distribution of the error terms $u_A$ and $u_B$.
Theorem \ref{thm: size nonGaussian} establishes though somewhat stronger conditions on the growth of the dimension $ p $. Now,  $p$ can grow with $n$ as $\log p = o(n^{1/27})$, rather than $\log p=o(\sqrt{n})$. This can be considered as a price to pay for allowing for such weak distributional assumptions on both the errors and the designs of the models.
\subsubsection{Type II error control}

Now we turn to power considerations and establish asymptotically that TIERS$^+$ is powerful as long as certain assumptions on the model structure are imposed.

\begin{assumption}
\label{assu: power nonGaussian}Consider the model (\ref{eq: original model A})
and (\ref{eq: original model B}). Let Assumption \ref{assu: size non-Gaussian}
hold. In addition, suppose that   $\mathbb{E}|u_{A,i}|^{9}$ and $\mathbb{E}|u_{B,i}|^{9}$
are bounded above with a  constant $\kappa_{3}>0$ and that $s_{\Pi}\|\beta_{A}-\beta_{B}\|_{0}+\|\beta_{A}+\beta_{B}\|_{0}=o(\sqrt{n}/\log p)$ \end{assumption}

Assumption \ref{assu: power nonGaussian} only imposes bounded ninth moment of  the error distribution in addition to the size requirements of the model parameter (a condition needed for Gaussian designs as well -- see Assumption \ref{assu: regularity power}). 

The next result establishes asymptotic power of the TIERS$^+$ test for a class of alternatives defined in \eqref{eq: alter hypo} where $\gamma_* = (\beta_A-\beta_B)/2$.

\begin{thm}
\label{thm: power sparse nonGauss}Consider Algorithm \ref{alg: main nonGaussian}.
Let Assumption \ref{assu: power nonGaussian} hold. Suppose that $n \to \infty$ and $p \to \infty$ with $\log p = o(n^{1/27})$. Then, there exist
constants $K_{1},K_{2}>0$ depending only on the constants in Assumption
\ref{assu: power nonGaussian} and with $\Sigma_V$ as defined in Lemma \ref{lem:1} such that, whenever
$$\|\Sigma_{V}\gamma_{*}\|_{\infty}\geq\sqrt{n^{-1}\log p}(K_{1}\|\gamma_{*}\|_{2}+K_{2}),$$
the test in Algorithm \ref{alg: main nonGaussian} is asymptotically powerful, i.e.,
 \[
\mathbb{P}\left(  T_{n}^{+}>\Gamma^{-1}(1-\alpha,\hat{Q})\right)\rightarrow1\qquad\forall\alpha\in(0,1).
\]

\end{thm}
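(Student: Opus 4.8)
The plan is to show that, under $H_1$ and the stated signal-strength condition, the numerator of $T_n^+$ is of order $\sqrt n\,\|\Sigma_V\gamma_*\|_\infty$, that the self-normalizer $\check\sigma_u$ is at most of order $\|\gamma_*\|_2+1$, and that the critical value $\Gamma^{-1}(1-\alpha,\hat Q)$ is only of order $\sqrt{\log p}$; combined with the hypothesis $\sqrt n\,\|\Sigma_V\gamma_*\|_\infty\ge\sqrt{\log p}\,(K_1\|\gamma_*\|_2+K_2)$ this forces $T_n^+>\Gamma^{-1}(1-\alpha,\hat Q)$ with probability tending to one. The first step is to identify the target of the misspecified regression of $Y$ on $W$. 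Since $y=w^\top\theta_*+z^\top\gamma_*+u$ and $\Pi_*=(\mathbb E ww^\top)^{-1}\mathbb E wz^\top$, the population least-squares coefficient is $\bar\theta:=\theta_*+\Pi_*\gamma_*$, and the associated residual satisfies $Y-W\bar\theta=V\gamma_*+U$, where $V=Z-W\Pi_*$ and, by Lemma \ref{lem:1}, $n^{-1}V^\top V\to\Sigma_V$. Crucially, Assumption \ref{assu: power nonGaussian} controls the sparsity of this target: $\|\bar\theta\|_0\le\|\beta_A+\beta_B\|_0+s_\Pi\|\beta_A-\beta_B\|_0=o(\sqrt n/\log p)$.

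Next I would invoke the ADDS guarantees of Theorem \ref{thm: ADDS} applied to the augmented estimator $\check\theta$, verifying Assumption \ref{assu:asds} at $b_*=\bar\theta$ with $\sigma_*\asymp\sqrt{\gamma_*^\top\Sigma_V\gamma_*+\sigma_u^2}=O(\|\gamma_*\|_2+1)$; the extra constraint $\|Y-W\theta\|_\infty\le\mu\sigma$ is feasible at $\bar\theta$ because $\|V\gamma_*+U\|_\infty=O_P(n^{1/9}+\|\gamma_*\|_2\sqrt{\log n})$ is dominated by $\mu\sigma_*\asymp n^{1/9}\log^{1/3}p\,(\|\gamma_*\|_2+1)$. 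This yields $\check\sigma_u=O_P(\|\gamma_*\|_2+1)$ together with $\|\check\theta-\bar\theta\|_1=O_P(\eta\|\bar\theta\|_0)=o_P(1/\sqrt{\log p})$. Writing $\hat V=Z-W\widetilde\Pi$ and substituting $Y-W\check\theta=W(\bar\theta-\check\theta)+V\gamma_*+U$, the numerator decomposes as
\[
n^{-1/2}\hat V^\top(Y-W\check\theta)=n^{-1/2}\hat V^\top V\gamma_*+n^{-1/2}\hat V^\top W(\bar\theta-\check\theta)+n^{-1/2}\hat V^\top U.
\]
For the signal term I would use $n^{-1}\hat V^\top V=\Sigma_V+o_P(1)$ entrywise (controlling the $\widetilde\Pi$ error through $\|n^{-1}W^\top V\|_\infty$ and the column sparsity $s_\Pi$) to obtain $n^{-1/2}\hat V^\top V\gamma_*=\sqrt n\,\Sigma_V\gamma_*+R$ with $\|R\|_\infty=O_P(\sqrt{\log p}\,(\|\gamma_*\|_2+1))$. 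The second term is bounded by $\sqrt n\,\|n^{-1}W^\top\hat V\|_\infty\|\check\theta-\bar\theta\|_1\le\sqrt n\,\eta\max_j\tilde\sigma_j\,\|\check\theta-\bar\theta\|_1=o_P(1)$ using the Dantzig constraint in \eqref{eq:???}. The third term $n^{-1/2}\hat V^\top U$ is exactly the stochastic object already controlled in the proof of Theorem \ref{thm: size nonGaussian}, giving $\|n^{-1/2}\hat V^\top U\|_\infty=O_P(\sqrt{\log p})$.

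It then remains to bound the critical value: since Assumption \ref{as:1}(i) (verified as in the size proof) gives $\max_j\hat Q_{j,j}=O_P(1)$, the Gaussian maximal inequality yields $\Gamma^{-1}(1-\alpha,\hat Q)=O_P(\sqrt{\log p})$. Collecting the displays, $\|n^{-1/2}\hat V^\top(Y-W\check\theta)\|_\infty\ge\sqrt n\,\|\Sigma_V\gamma_*\|_\infty-O_P(\sqrt{\log p}(\|\gamma_*\|_2+1))$, so dividing by $\check\sigma_u=O_P(\|\gamma_*\|_2+1)$ gives $T_n^+\gtrsim\sqrt n\,\|\Sigma_V\gamma_*\|_\infty/(\|\gamma_*\|_2+1)-O_P(\sqrt{\log p})$; choosing $K_1,K_2$ larger than the constants hidden in these $O_P$ bounds makes the right-hand side exceed $\Gamma^{-1}(1-\alpha,\hat Q)$ with probability tending to one.

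The main obstacle is the third term. Because $U$ has only nine finite moments and the design is merely sub-Gaussian, $n^{-1/2}\hat V^\top U$ cannot be handled by the clean conditional-Gaussian argument used under $H_0$; indeed, under $H_1$ the independence between $V$ and $\check\theta$ is lost, since $Y$ now depends on $Z$ through the term $V\gamma_*$. I would therefore reuse the truncation-plus-high-dimensional-CLT machinery behind Theorem \ref{thm: size nonGaussian}—which is precisely what forces the slower growth $\log p=o(n^{1/27})$ and motivates the residual-bounding constraint $\|Y-W\theta\|_\infty\le\mu\sigma$ in the augmented ADDS—rather than re-deriving it, isolating as genuinely new only the deterministic lower bound on the signal $\sqrt n\,\Sigma_V\gamma_*$ and the verification that it dominates the pooled error.
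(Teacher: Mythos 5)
Your overall architecture coincides with the paper's: the published proof of Theorem \ref{thm: power sparse nonGauss} simply reruns the proof of Theorem \ref{thm: main result power sparse}, applying Theorem \ref{thm: ADDS} with $\mathcal{B}(\sigma)=\{b\in\mathbb{R}^p:\|Y-Wb\|_\infty\le\mu\sigma\}$ and replacing Lemma \ref{lem: sigma_star sparse alter} by Lemma \ref{lem: sigma_star sparse alter nonGauss}, which verifies feasibility of the sup-norm constraint at $b_*=\theta_*+\Pi_*\gamma_*$ with $\sigma_*=\sqrt{\gamma_*^\top\Sigma_V\gamma_*+\sigma_u^2}$ exactly as you sketch (via a ninth-moment Markov bound on $|v_1^\top\gamma_*+u_1|$). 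Your decomposition into signal, $\theta$-estimation, noise, and $\Pi$-estimation terms is the paper's $J_1,\dots,J_4$, and your treatment of the critical value via $\|\hat Q\|_\infty=O_P(1)$ and the Gaussian tail bound is identical.

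The one place you diverge --- and where your plan has a genuine soft spot --- is the noise term. You propose to control $\|n^{-1/2}\hat V^\top U\|_\infty$ by ``reusing the truncation-plus-high-dimensional-CLT machinery behind Theorem \ref{thm: size nonGaussian},'' while simultaneously (and correctly) observing that the conditional independence of $V$ from the residual, on which that machinery rests, is lost under $H_1$. As stated this is circular: that argument cannot be reused here. It is also unnecessary. The paper evaluates the test statistic at the single coordinate $j_*$ maximizing $|e_j^\top\Sigma_V\gamma_*|$, so only the scalar $n^{-1/2}V_{j_*}^\top U$ must be controlled, and a Lyapunov central limit theorem --- using the bounded higher moments of $u$ and the independence of $V$ from $U$ itself, which does survive under $H_1$ --- gives $O_P(1)$. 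If you prefer your uniform bound over all coordinates, Lemma \ref{lem: hoeffding plus union} (conditioning on $U$, using sub-Gaussianity of $v_{i,j}$ and $n^{-1}\sum_i u_i^2=O_P(1)$) yields $O_P(\sqrt{\log p})$ with no distributional approximation. Either route closes the gap; a CLT-type approximation of the noise term is never needed for the power statement, only a crude upper bound, so the restriction $\log p=o(n^{1/27})$ enters the power proof only through the size-side ingredients you already cite.
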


Per Theorem \ref{thm: power sparse nonGauss} we conclude that TIERS$^+$ preserves power properties similar to TIERS. In particular, whenever the model is sparse, the test achieves optimal power and does not lose efficiency compared to tests designed only for sparse models.

\section{Numerical Examples}\label{sec: MC simulations}
In this section we present finite-sample evidence of the accuracy of the proposed method. We consider two broad groups of examples: differential regressions and differential networks.

\subsection{Differential  Regressions}
 In all the setups, we consider $n=200$ and $p=500$. We consider the Toeplitz
design $x_{A}\sim \mathcal{N}(0,\Sigma_{A})$ and $x_{B}\sim \mathcal{N}(0,\Sigma_{B})$
with $(\Sigma_{A})_{i,j}=(0.4)^{|i-j|}$ and $\Sigma_{B}=c\Sigma_{A}$.
We consider two specifications for the model parameters:
\begin{itemize}
\item[(1)] In the sparse
regime, $\beta_{A}=(1,1,1,0,\cdots,0)^{\top}$ and $\beta_{B}=(1+h,1,1,0,\cdots,0)^{\top}$,
i.e., $\|\beta_{A}\|_{0}=\|\beta_{B}\|_{0}=3$;
\item[(2)]  In the dense regime,
$\beta_{A}=\zeta\|\zeta\|_{2}^{-1}$ with entries of $\zeta$ being
drawn from the uniform distribution on $[0,1]$ and $\beta_{B}=\beta_{A}+(h,0,\cdots,0)^{\top}$.
\end{itemize}
The null hypothesis $H_{0}$ (\ref{eq: null hypo}) corresponds to
$h=0$ and alternative hypotheses correspond to $h\neq0$. We also
consider two specifications for the error distributions
\begin{itemize}
\item[(a)] In the light-tail
case, $u_{A}$ and $u_{B}$ are drawn from the standard normal distribution, 
\item[(b)] In the
heavy-tail case, $u_{A}$ and $u_{B}$ are drawn from the standard
Cauchy distribution. 
\end{itemize}

These different specifications will be denoted as follows: SL (for sparse
and light-tail), SH (for sparse and heavy-tail), DL (for dense and light-tail)
and DH (for dense and heavy-tail). 

%As we can see, the test is more powerful for sparse settings with light-tails. However, our test is robust to dense parameters and heavy tails in terms of size control. 

\begin{table*}[h]
\caption{Rejection probability for $c=2$} \label{tab:1}
\centering
\ra{0.50}
\begin{tabular}{@{}rrrrcrrrcrrrcrrr@{}}\toprule
& \multicolumn{2}{c}{SL} & \phantom{a}& \multicolumn{2}{c}{SH} &
\phantom{a} & \multicolumn{2}{c}{DL}&\phantom{a} & \multicolumn{2}{c}{DH}\phantom{a}\\ 
\cmidrule{2-3} \cmidrule{5-6} \cmidrule{8-9} \cmidrule{11-12}
& $h$ & $P$-{\tiny TIERS}  && $h$ & $P$-{\tiny TIERS}   && $h$ & $P$-{\tiny TIERS}  && $h$ & $P$-{\tiny TIERS} \\ \midrule
$$\\ 
$$ & 0.00 & 4\%  && 0& 2\% && 0 & 3\% && 0 & 2\%\\
$$ &0.32& 14\% && 4 & 16\%  && 0.48&6\% && 4 & 10\%\\
$$ &0.44&42\% && 8& 42\%&& 2.16 & 44\%&& 8 & 40\%\\
$$\\
$$ & 0.48& 52\%&& 12 & 57\% &&2.40 & 56\% &&12 & 56\% \\
$$ & 0.52&67\% && 16& 67\% &&2.64 & 68\%&&16& 68\%\\
$$ &0.60 &78\% && 20 & 74\%  && 2.88 & 78\%&& 20 & 74\% \\
$$\\
$$ & 0.64& 88\%&& 24 & 83\% &&3.12 & 85\% &&24 & 82\% \\
$$ & 0.68&92\% && 44& 90\%&&3.36& 90\%&&44& 90\%\\
$$ &0.72&96\% && 68 & 93\% && 3.60& 92\%&& 64 & 92\% \\
$$\\
$$ & 0.76& 97\%&& 124 & 86\% &&3.84 & 95\% &&124 & 96\% \\
$$ & 0.88&99\% && 168& 97\%&&4.08& 98\%&&168& 97\%\\
$$ &0.92&100\%&& 268 &100\% && 4.56& 100\%&& 264 & 99\% \\
\bottomrule
\end{tabular}
\end{table*}

\begin{table*}[h]
\caption{Rejection probability for $c=1$} \label{tab:2}
\centering
 \ra{0.50}
\begin{tabular}{@{}rrrrcrrrcrrrcrrr@{}}\toprule
& \multicolumn{2}{c}{SL} & \phantom{a}& \multicolumn{2}{c}{SH} &
\phantom{a} & \multicolumn{2}{c}{DL}&\phantom{a} & \multicolumn{2}{c}{DH}\phantom{a}\\
\cmidrule{2-3} \cmidrule{5-6} \cmidrule{8-9} \cmidrule{11-12}
& $h$ & $P$-{\tiny TIERS}  && $h$ & $P$-{\tiny TIERS}   && $h$ & $P$-{\tiny TIERS}  && $h$ & $P$-{\tiny TIERS} \\ \midrule
$$\\ 
$$ & 0.00 & 6 \% && 0&  2 \% && 0 & 2 \% && 0 & 6 \%\\
$$ &0.44&7 \%&& 8&10 \%&& 2.16 &9 \%&& 8 & 9 \%\\
$$ & 0.48& 8 \%&& 12 & 23 \% &&2.40 & 12 \%&&12 & 28 \% \\
$$\\
$$ & 0.96& 51  \% && 28&  56 \%&&3.60 &  50 \%&&28&  55 \%\\
$$ & 1.04& 65 \% &&48&  69 \%&&4.08 &  69 \%&&44& 67 \%\\
$$ &1.16 &78  \%&& 56 & 77 \%  && 4.32 & 79 \%&& 56 & 78 \% \\
$$\\
$$ & 1.20&  85 \%&& 64 & 82 \% &&4.56&  82 \% &&64 &  82 \% \\
$$ & 1.24& 93 \% && 108&  90 \%&&5.04 &  92 \%&&92&  90 \%\\
$$ &1.28& 95  \%&& 140 &  93  \%&& 5.38&  93 \%&& 136 & 92  \%\\
$$\\
$$ & 1.32&  96 \%&& 200&  94 \% &&5.76 &  96  \%&&200 &  95 \% \\
$$ & 1.60& 99 \% &&284&  97 \%&&6.00&  97 \%&&284 &  97 \%\\
$$ &1.64&100 \%&&320&100  \%&& 6.24& 100 \%&& 330& 100 \%\\
\bottomrule
\end{tabular}

\end{table*}

The summary of the results is presented in Tables \ref{tab:1} - \ref{tab:3} where the rejection probabilities are computed based on  $100$ repetitions. 
The tuning parameter $\eta$  is chosen as  adaptively as 
$$\eta=\sqrt{2\log (p)/n} \max_{1\leq j\leq p}\|W_j\|_2/\sqrt{n}.$$ 
We vary deviations from the null to highlight power properties as well as the Type I errors.
From the tables we observe that the TIERS  performs exceptionally well in both Gaussian and heavy-tailed setting for the error terms. The rejection probabilities under the null hypothesis ($h=0 $) are close to the nominal size 5\% in all the settings, as expected from our theory. 

In the case of  light-tailed models (SL and DL) TIERS achieves  perfect power relatively quickly independent of the sparsity of the underlying model. The dense case required larger deviations from the null to reach power of one.
 In the case of heavy-tailed models,  TIERS shows excellent performance irrespective of the sparsity of the model. In comparison with the models with light-tailed errors,  the models with  the heavier tails need larger deviations from the null in order to reach the same power. This is expected as the simulated Cauchy errors  had an average variance of   
 about $140$ over $100$ independent repetitions. Remarkably, dense models with heavy tailed errors performed extremely close to those of sparse models with heavy tailed errors, indicating that the heavy tails are the main driver of the power loss.

\begin{table*}
\caption{Rejection probability for $c=1/2$} \label{tab:3}
\centering
 \ra{0.50}
\begin{tabular}{@{}rrrrcrrrcrrrcrrr@{}}\toprule
& \multicolumn{2}{c}{SL} & \phantom{a}& \multicolumn{2}{c}{SH} &
\phantom{a} & \multicolumn{2}{c}{DL}&\phantom{a} & \multicolumn{2}{c}{DH}\phantom{a}\\
\cmidrule{2-3} \cmidrule{5-6} \cmidrule{8-9} \cmidrule{11-12}
& $h$ & $P$-{\tiny TIERS}  && $h$ & $P$-{\tiny TIERS}   && $h$ & $P$-{\tiny TIERS}  && $h$ & $P$-{\tiny TIERS} \\  \midrule
$$\\ 
$$ & 0.00 & 2 \%  && 0&  5 \% && 0 & 4 \% && 0 & 7 \%\\
$$ &0.28& 12 \% && 4 & 10  \% && 0.48&6 \% && 4 &  12 \%\\
$$ &0.44&23 \% && 8&  26 \%&& 2.16 & 32 \%&& 8 & 26 \%\\
$$\\
$$ & 0.48& 27 \%&& 12 &  48 \% &&2.40 & 46 \% &&12 &  47 \% \\
$$ & 0.52& 36 \% && 16&  56 \% &&2.64 &  56 \%&&16& 56 \%\\
$$ &0.64 & 64  \%&& 20 &  62  \% && 2.88 &  66 \%&& 20 &61 \% \\
$$\\
$$ & 0.72& 75 \%&& 32 & 74  \%&&3.12 &  77 \% &&32 & 74 \% \\
$$ & 0.80& 85 \% && 72&  86 \%&&3.36&  85 \%&&72&  86 \%\\
$$ &0.84& 91  \%&& 124 &  92 \% && 3.60&  91 \%&& 124 & 92 \% \\
$$\\
$$ & 0.92& 97 \%&& 160 &  95 \% &&3.84 &  95 \% &&180 &  9 \% \\
$$ & 1.04& 99 \% && 244&  97 \%&&4.32&  97 \%&&244&  97 \%\\
$$ &1.20&100 \%&& 288 &100 \% && 4.80& 100 \%&& 288 &  99 \% \\
\bottomrule
\end{tabular}

\end{table*}

\subsection{Differential  Networks}

We
consider two independent Gaussian graphical models, where $z_{A}\sim \mathcal{N}(0,\Omega_{A}^{-1})$
and $z_{B}\sim \mathcal{N}(0,\Omega_{B}^{-1})$. The goal is to conduct inference
on the first row of the precision matrices by testing 
\[
H_{0}:\ \Omega_{A,1,j}=\Omega_{B,1,j}, \qquad\forall1\leq j\leq p.
\]

We consider the follow specification
\begin{equation*}
\Omega_{A}=\begin{pmatrix}\sigma^{-2} & -\beta_{A}^{\top}\sigma^{-2}\\
-\beta_{A}\sigma^{-2} & \Omega+\beta_{A}\beta_{A}^{\top}\sigma^{-2}
\end{pmatrix}\qquad {\rm and}\qquad \Omega_{B}=\begin{pmatrix}\sigma^{-2} & -\beta_{B}^{\top}\sigma^{-2}\\
-\beta_{B}\sigma^{-2} & \Omega+\beta_{B}\beta_{B}^{\top}\sigma^{-2}
\end{pmatrix}.
\end{equation*}

Notice that we can write the model in the regression form: $z_{A,1}=z_{A,-1}^{\top}\beta_{A}+\varepsilon_{A}$ and $z_{B,1}=z_{B,-1}^{\top}\beta_{B}+\varepsilon_{B}$, where $\varepsilon_{A}$ and $\varepsilon_B$ are the error terms. Therefore, it is equivalent to testing  
\[
H_0:\ \beta_A=\beta_B.
\]

We set $\sigma=0.5$ and 
\[
\beta_{B,j}=\begin{cases}
\beta_{A,1}+h & {\rm if}\ j=1\\
\beta_{B,j} & {\rm otherwise}.
\end{cases}
\]

As in the linear case, $h$ represents the deviations from the null
hypothesis and $h=0$ corresponds to the null hypothesis. We consider
two cases for $\beta_{A}$:
\begin{enumerate}
        \item In the sparse case, $\beta_{A}=(1,1,1,0,\cdots,0)^{\top}/\sqrt{3}$.
        \item In the dense case, $\beta_{A}=\zeta\|\zeta\|_{2}^{-1}$ with entries
        of $\zeta$ being drawn from the uniform distribution on $[0,1]$.
\end{enumerate}

We consider two regimes for $\Omega$: sparse regime and dense regime. 
\begin{itemize}
        \item[(a)]  For the sparse regime, we follow \cite{ren2015asymptotic} by setting
        $\Omega={\rm D}$, where ${\rm D}\in\mathbb{R}^{(p-1)\times(p-1)}$
        is block diagonal with ${\rm D}_{1}\in\mathbb{R}^{p_{1}\times p_{1}}$,
        ${\rm D}_{2}\in\mathbb{R}^{p_{2}\times p_{2}}$ and ${\rm D}_{3}\in\mathbb{R}^{p_{2}\times p_{2}}$
        on the diagonal. Here, $p=500$, $p_{1}=249$ and $p_{2}=125$. For
        $k=1,2,3$, we set ${\rm D}_{k,j,j}=\alpha_{k}$, ${\rm D}_{k,j,j-1}={\rm D}_{k,j-1,j}=0.5\alpha_{k}$
        and ${\rm D}_{k,j,j-2}={\rm D}_{k,j-2,j}=0.4\alpha_{k}$, where $\alpha_{1}=1$,
        $\alpha_{2}=2$ and $\alpha_{3}=4$.
        \item[(b)] For the dense regime, we set $\Omega={\rm D}^{-1}$. 
\end{itemize}
It is worth pointing out that no existing method applies to the setting of $\Omega={\rm D}^{-1}$, i.e. the setting of large-scale and dense graphical models.
The results are summarized in Table \ref{tab: GGM}, where four cases
are considered: {\small{}S$\beta$+S$\Omega$ (for sparse $\beta_{A}$
        and sparse $\Omega$), D$\beta$+S$\Omega$ (for dense $\beta_{A}$
        and sparse $\Omega$), S$\beta$+D$\Omega$ (for sparse $\beta_{A}$
        and dense $\Omega$) and D$\beta$+D$\Omega$ (for dense $\beta_{A}$
        and dense $\Omega$). }{\small \par}
In both sparse and dense settings, TIER performs well in terms of
(1) controlling the size for $h=0$ and (2) exhibiting power against
alternatives $h\neq0$. As should be expected, TIER has better power
in sparse specifications than in dense specifications, although the
power eventually reaches on in all the settings. 

\begin{table}
        \caption{\label{tab: GGM}Rejection probabilities for Gaussian graphical models}     
        \centering{}{\small{}}%
          \ra{0.60}  
        \begin{tabular}{ccc||ccc}
              \toprule
                {\small{}$h$} & {\small{}S$\beta$+S$\Omega$} & {\small{}D$\beta$+S$\Omega$} & {\small{}$h$} & {\small{}S$\beta$+D$\Omega$} & {\small{}D$\beta$+D$\Omega$}\\  
               \midrule
                0.00 & 5\% & 3\% & 0.00 & 5\% & 7\%\tabularnewline
                0.48 & 14\% & 11\% & 1.68 & 15\% & 5\%\tabularnewline
                0.52 & 21\% & 13\% & 1.82 & 30\% & 6\%\tabularnewline
                &  &  &  &  & \tabularnewline
                0.60 & 35\% & 25\% & 1.96 & 41\% & 8\%\tabularnewline
                0.72 & 59\% & 51\% & 2.10 & 53\% & 8\%\tabularnewline
                0.84 & 76\% & 71\% & 2.38 & 69\% & 15\%\tabularnewline
                &  &  &  &  & \tabularnewline
                0.88 & 81\% & 77\% & 2.66 & 78\% & 25\%\tabularnewline
                0.92 & 84\% & 82\% & 3.78 & 89\% & 69\%\tabularnewline
                0.96 & 88\% & 88\% & 3.92 & 90\% & 69\%\tabularnewline
                &  &  &  &  & \tabularnewline
                1.08 & 94\% & 97\% & 5.00 & 96\% & 92\%\tabularnewline
                1.44 & 99\% & 99\% & 7.00 & 98\% & 95\%\tabularnewline
                1.60 & 100\% & 100\% & 8.00 & 100\% & 100\%\tabularnewline
        \bottomrule 
        \end{tabular}{\small \par}
\end{table}

\section*{Conclusions and discussions}
  
We have presented a framework for performing inference on the equivalence of the coefficients vectors
between two linear regression models. We show
 that the rejection probability of the proposed tests under the null hypothesis converges to the  nominal size under extremely weak conditions: (1) no    assumption on the structure or sparsity of the coefficient vector and (2) no assumption on  the error distribution. If the features in the two samples have the same variance matrix, then our result does not require any assumption on the feature covariance either.  To the best of our knowledge, this is the first result for
two-sample testing in the high-dimensional setting that is robust
to the failure of the model sparsity and thus is truly novel.  Moreover, we establish both efficiency and optimality of our procedure.
   Applying this
framework to performing inference on differential regressions and differential networks, we obtain new procedures of constructing accurate  inferences in situations
not previously addressed in the literature.

Our work also opens doors to new research areas in statistics. In
terms of methodology, our work exploits the implication of the null
hypothesis and can be extended to other inference problems. For example,
it is practically important to extend our method to the high-dimensional ANCOVA problems, where the hypothesis of interest involves equivalence of parameters in multiple samples. Another extension is the inference of partial equivalence. Consider
the problem of two or more samples generated by linear models, where
the goal is to test the hypothesis that certain components in the model
parameter are identical in the two samples.  One important application is specification tests in large-scale models. Suppose
that the data is collected from many different sources and contain
several subgroups of observations. A common approach of extracting
information from these datasets is to assume that all the subgroups
are generated from linear models but these different subgroups share
the same values in certain entries of the model parameters. Numerous
methods  have been developed in order to estimate the common components.
However, as far as we know,  no work exists  that can be used to verify
whether it is reasonable to assume common components. Our work can be  further extended to provide simple
tests for the specification of the common parameter values.

\appendix

\section{Sufficient condition of Gaussian approximation in high dimensions }\label{sec: appendix HD CLT}

 For the reader's convenience,
we state the following result that applies to the conditional probability
measure $\mathbb{P}(\cdot\mid\mathcal{F}_{n})$. 
This result can be used to verify assumption (ii) of Theorem \ref{thm: approx assuming CLT}.\begin{prop}[Proposition 2.1 of \cite{chernozukov2014central}]
\label{prop: HD CLT CCK} Let $\{\Psi_{i}\}_{i=1}^{n}$ be a sequence
of random vectors in $\mathbb{R}^{p}$ such that, conditional on some
$\sigma$-algebra $\mathcal{F}_{n}$, is independent across $i$ and
has zero mean. Let  $Q=n^{-1}\sum_{i=1}^{n}\mathbb{E}(\Psi_{i}\Psi_{i}^{\top}\mid\mathcal{F}_{n})$. Suppose that
\begin{itemize}
\item[(i)] there exists a constant $b\in(0,\infty)$ such that $\mathbb{P}(\min_{1\leq j\leq p}Q_{j,j}\geq b)\rightarrow1$.
\item[(ii)] there exists a sequence of $\mathcal{F}_{n}$-measurable random
variables $B_{n}>0$ such that $B_{n}=o(\sqrt{n}/\log^{7/2}(pn))$,
$\max_{1\leq j\leq p}n^{-1}\sum_{i=1}^{n}\mathbb{E}(|\Psi_{i,j}|^{3}\mid\mathcal{F}_{n})\leq B_{n}$
and $\max_{1\leq j\leq p}n^{-1}\sum_{i=1}^{n}\mathbb{E}(|\Psi_{i,j}|^{4}\mid\mathcal{F}_{n})\leq B_{n}^{2}$.
\item[(iii)] either (1) $\max_{1\leq i\leq n,\ 1\leq j\leq p}\mathbb{E}[\exp(|\Psi_{i,j}|/B_{n})\mid\mathcal{F}_{n}]\leq2$
or \\ (2) $\max_{1\leq i\leq n}\mathbb{E}(\max_{1\leq j\leq p}|\Psi_{i,j}|^{q}\mid\mathcal{F}_{n})\leq B_{n}^{q}$
for some $B_{n}=o(n^{1-2/q}/\log^{3}(pn))$ and $q>0$. 
\end{itemize}
Then, when $n\to \infty$ and $p \to \infty $, we have 
\[
\sup_{x\in\mathbb{R}}\left|\mathbb{\mathbb{P}}\left(\left\Vert n^{-1/2}\sum_{i=1}^{n}\Psi_{i}\right\Vert _{\infty}\leq x\mid\mathcal{F}_{n}\right)-\Gamma(x,Q)\right|=o_{P}(1).
\]

\end{prop}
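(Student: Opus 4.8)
The plan is to deduce this conditional statement directly from the unconditional high-dimensional central limit theorem of Chernozhukov, Chetverikov and Kato, applied realization-by-realization on the probability space carrying $\mathcal{F}_{n}$. First I would condition on $\mathcal{F}_{n}$: on the resulting conditional probability space the vectors $\{\Psi_{i}\}_{i=1}^{n}$ are independent and mean zero, with conditional second-moment matrix exactly $Q$, so the hypotheses of the unconditional result are in force pointwise in the realization of $\mathcal{F}_{n}$. Their theorem then supplies a nonasymptotic bound of the form
\[
\sup_{x\in\mathbb{R}}\left|\mathbb{P}\left(\Bigl\|n^{-1/2}\textstyle\sum_{i=1}^{n}\Psi_{i}\Bigr\|_{\infty}\leq x\mid\mathcal{F}_{n}\right)-\Gamma(x,Q)\right|\leq C\,\varrho_{n},
\]
where $\varrho_{n}$ is an explicit function of $n$, $p$, the diagonal lower bound $b$, and the moment proxy $B_{n}$; for the sub-exponential branch (iii)(1) one has $\varrho_{n}\asymp(B_{n}^{2}\log^{7}(pn)/n)^{1/6}$, and for the polynomial branch (iii)(2) the corresponding rate from their central-limit paper.

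The second step is to verify that conditions (i)--(iii) force $\varrho_{n}\to0$. Condition (ii) gives $B_{n}=o(\sqrt{n}/\log^{7/2}(pn))$, which is exactly the requirement making $B_{n}^{2}\log^{7}(pn)/n\to0$; condition (i) supplies the uniform lower bound $b$ on $\min_{j}Q_{j,j}$ that controls the Gaussian anti-concentration constant absorbed into $C$; and condition (iii) furnishes the tail or maximal-moment input that each branch of the theorem requires. On the event where (i)--(iii) all hold, the right-hand side is therefore deterministically $o(1)$.

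Finally I would convert this into the advertised $o_{P}(1)$ conclusion. Since (i)--(iii) are stated as $\mathcal{F}_{n}$-measurable high-probability events, let $\mathcal{E}_{n}$ denote their intersection, so $\mathbb{P}(\mathcal{E}_{n})\to1$. On $\mathcal{E}_{n}$ the conditional Kolmogorov distance is bounded by $C\varrho_{n}=o(1)$, while off $\mathcal{E}_{n}$ it is at most one on a set of vanishing probability; this is precisely the definition of $o_{P}(1)$.

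The main obstacle will be the bookkeeping in the conditional application: the unconditional result is stated for fixed independent summands, whereas here $Q$, $B_{n}$, and the entire approximation error are random through $\mathcal{F}_{n}$. The delicate point is that the anti-concentration bound for $\|\mathcal{N}(0,Q)\|_{\infty}$ -- which is what permits passing from closeness of covariances to closeness of the sup-norm laws -- must hold \emph{uniformly} over the random realizations of $Q$. That uniformity is exactly what the diagonal lower bound in condition (i) guarantees, and it is the reason that condition cannot be dispensed with.
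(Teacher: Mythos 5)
The paper offers no proof of this statement: it is imported verbatim as Proposition~2.1 of the cited Chernozhukov--Chetverikov--Kato paper, stated ``for the reader's convenience'' and used as a black box, so there is no internal argument to compare yours against. That said, your sketch is essentially the standard and correct way to justify the conditional form from the unconditional one: fix a regular conditional probability given $\mathcal{F}_{n}$, note that the summands are then independent, mean zero, with covariance exactly $Q$, invoke the nonasymptotic CCK bound whose constant depends only on the lower bound $b$ from condition (i) (and on $q$ in the polynomial branch), and observe that condition (ii) makes $B_{n}^{2}\log^{7}(pn)/n\to 0$ so the bound is $o(1)$ on the intersection $\mathcal{E}_{n}$ of the hypothesis events, which has probability tending to one; off $\mathcal{E}_{n}$ the Kolmogorov distance is trivially at most one, giving $o_{P}(1)$. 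Two small points of care: the CCK bound is an inequality, so $\varrho_{n}\lesssim (B_{n}^{2}\log^{7}(pn)/n)^{1/6}$ rather than $\asymp$; and in the paper's applications $B_{n}$ is in fact a deterministic sequence (see the proof of the non-Gaussian size theorem, where $B_{n}=4(\mu^{3}\vee 1)(C_{1}\vee C_{2})$), so the measurability bookkeeping you flag as the main obstacle is even lighter than you anticipate. Your emphasis on the role of condition (i) in making the Gaussian anti-concentration constant uniform over realizations of $Q$ is exactly right and is indeed the reason that hypothesis cannot be dropped.
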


\section{Proof of Lemma 1  }\label{sec: proof Sec 1}

\begin{proof}[\textbf{Proof of Lemma \ref{lem: naive example}}]
        Let $Y_{k}=(y_{k,1},\cdots,y_{k,n})^\top \in \mathbb{R}^n$ and \\ $X_{k}=(x_{k,1},\cdots,x_{k,n})^\top \in \mathbb{R}^{n\times p}$ and define the event $\mathcal{J}=\mathcal{J}_{A}\bigcap\mathcal{J}_{B}$,
        where 
        $$\mathcal{J}_{k}=\{\|n^{-1}X_{k}^{\top}Y_{k}\|_{\infty}\leq\lambda n^{-1/2}\|Y_{k}\|_{2} \}$$
        for $k\in\{A,B\}$. We proceed in two steps: (1) show the result assuming
        $\mathbb{P}(\mathcal{J})\rightarrow1$ and (2) show $\mathbb{P}(\mathcal{J})\rightarrow1$.
        
        \vskip 5pt
        \textbf{Step 1: show the result assuming $\mathbb{P}(\mathcal{J})\rightarrow1$. }
              \vskip 5pt
              
        Let $\beta_{*}=\mathbf{1}_{p}c/\sqrt{n}$ and 
        $$\zeta=\sqrt{n}(\hat{\Sigma}_{A}-\hat{\Sigma}_{B})\beta_{*}+n^{-1/2}\sum_{i=1}^{n}(x_{A,i}u_{A,i}-x_{B,i}u_{B,i}).$$
        For $k\in\{A,B\}$, on the event $\mathcal{J}_{k}$, the trivial estimator $\bar{\beta}_{k}=0$
        satisfies the KKT optimality conditions of the scaled Lasso optimization (\ref{eq: scaled lasso}):
        $$
          \begin{cases}
          n^{-1}X_{k,j}^{\top}(Y_{k}-X_{k}\bar{\beta}_{k})=\lambda \bar{\sigma}_{k} {\rm sign}(\bar \beta_{k,j})& {\rm if\ } \bar \beta_{k,j}\neq0\\
|n^{-1}X_{k,j}^{\top}(Y_{k}-X_{k}\bar{\beta}_{k})|\leq\lambda \bar{\sigma}_{k} & {\rm if\ }\bar \beta_{k,j}=0\\
          \bar{\sigma}_{k}=\|Y_{k}-X_{k}\bar{\beta}_{k}\|_{2}/\sqrt{n},
          \end{cases}
        $$
        where $\beta_{k,j}$ denotes the $j$-th entry of $\beta_k$, $X_{k,j}$ denotes the $j$-th column of $X_k$ and ${\rm sign}(\cdot)$ is the sign function defined by  ${\rm sign}(x)=x/|x|$ for $x\neq0$ and  ${\rm sign}(0)=0$.  Therefore, on the event $\mathcal{J}$, a trivial solution of all zeros is a scaled Lasso solution, i.e. $\hat{\beta}_{A}=\hat{\beta}_{B}=0$.
        Since $\beta_{A}=\beta_{B}=\beta_{*}$ (by $H_{0}$ (\ref{eq: null hypo})),
        we have that, on the event $\mathcal{J}$, 
        \begin{align*}
        \sqrt{n}(\tilde{\beta}_{A}-\tilde{\beta}_{B})&=n^{-1/2}X_{A}^{\top}Y_{A}-n^{-1/2}X_{B}^{\top}Y_{B}
        \\
       & =\sqrt{n}(\hat{\Sigma}_{A}\beta_{A}-\hat{\Sigma}_{B}\beta_{B})+n^{-1/2}\sum_{i=1}^{n}(x_{A,i}u_{A,i}-x_{B,i}u_{B,i})\\
        &=\sqrt{n}(\hat{\Sigma}_{A}-\hat{\Sigma}_{B})\beta_{*}+n^{-1/2}\sum_{i=1}^{n}(x_{A,i}u_{A,i}-x_{B,i}u_{B,i})=\zeta.
        \end{align*}

        Since $\mathbb{P}(\mathcal{J})\rightarrow1$, we have 
        \begin{equation}
        \mathbb{P}\left(\sqrt{n}(\tilde{\beta}_{A}-\tilde{\beta}_{B})=\zeta\right)\rightarrow1.\label{eq: naive example eq 1}
        \end{equation}

        Notice that, conditional on $\{(x_{A,i},x_{B,i})\}_{i=1}^{n}$, $\zeta$
        is Gaussian with mean $\sqrt{n}(\hat{\Sigma}_{A}-\hat{\Sigma}_{B})\beta_{*}$
        and variance $\hat{\Sigma}_{A}+\hat{\Sigma}_{B}$. 
        
        Therefore, $F(\cdot,c,\hat{\Sigma}_{A},\hat{\Sigma}_{B})$
        is the conditional distribution function of \\ $\max_{1\leq j\leq p}|\zeta_{j}|/\sqrt{\hat{\Sigma}_{A,j,j}+\hat{\Sigma}_{B,j,j}}$.
        Thus, the desired result follows by (\ref{eq: naive example eq 1}).
        
              \vskip 5pt
        \textbf{Step 2: show $\mathbb{P}(\mathcal{J})\rightarrow1$. }
              \vskip 5pt
              
        We show $\mathbb{P}(\mathcal{J}_{k})\rightarrow1$. Recall that $\lambda=\lambda_0\sqrt{n^{-1}\log p}$. Let $U_{k}=Y_{k}-X_{k}\beta_{k}$.   For simplicity,
        we drop the subscript $k$ and write $X$, $Y$ and $U$, instead
        of $X_{k}$, $Y_{k}$ and $U_{k}$. Moreover, for $1\leq l\leq p$,
        we define $X_{l}$ as the $l$-th column of $X$ and 
        $$\varepsilon_{l}=n^{-1/2}c\sum_{j=1,j\neq l}^{p}X_{j}+U.$$
        
Notice that $X_{l}\perp\varepsilon_{l}$
        and $Y=cn^{-1/2}X_{l}+\varepsilon_{l}$ for any $1\leq l\leq p$. Let $\lambda_{1}$    be a constant satisfying $\sqrt{2}<\lambda_{1}<\lambda_{0}$;
        this is possible since $\lambda_{0}>\sqrt{2}$ is a constant. 
        
        Observe that 
        $$\mathbb{E}\|Y\|_2^2/n=\sigma_*^2$$ with $\sigma_{*}=\sqrt{n^{-1}pc^{2}+1}$.  Since $n^{-1}\|Y\|_{2}^{2}\sigma_*^{-2}$ is the average of $n$
        independent $\chi^{2}(1)$ random variables, the classical central
        limit theorem implies that 
        $$n^{-1}\|Y\|_{2}^{2}\sigma_*^{-2}=1+O_{P}(n^{-1/2}).$$
        This means that 
        \[
        \mathbb{P}\left(n^{-1/2}\|Y\|_{2}>\left(1-n^{-1/2}\log^{1/4}p\right)\sigma_{*}\right)\rightarrow1.
        \]

Since $n^{-1}\|X_{l}\|_{2}^{2}$ is also the average
        of $n$ independent $\chi^{2}(1)$ random variables, which are sub-exponential,
        the Bernstein's inequality and the union bound imply that, there exists a constant $c_1>0$ such that   
        \[
        \mathbb{P}\left(\max_{1\leq l\leq p}n^{-1}\|X_{l}\|_{2}^{2}<1+c_{1}\sqrt{n^{-1}\log p}\right)\rightarrow1.
        \]

        Hence, $ \mathbb{P}(\mathcal{A})\rightarrow1$ with
        \begin{equation}
     \mathcal{A}=\left\{ \max_{1\leq l\leq p}n^{-1}\|X_{l}\|_{2}^{2}<1+c_{1}\sqrt{n^{-1}\log p}\right\} \bigcap\left\{ n^{-1/2}\|Y\|_{2}>\left(1-n^{-1/2}\log^{1/4}p\right)\sigma_{*}\right\} .\label{eq: naive test eq 2}
        \end{equation}

        Conditional on $X_{l}$, $n^{-1/2}X_{l}^{\top}\varepsilon_{l}$ is
        Gaussian with mean zero and variance 
        $\sigma_{l}^{2}=n^{-1}\|X_{l}\|_{2}^{2}\sigma_{+}^{2}$, where $\sigma_{+}^{2}=n^{-1}c^{2}(p-1)+1$.
         Recall the elementary
        inequality that for $Z\sim \mathcal{N}(0,\sigma^{2})$ and $t>0$, $\mathbb{P}(|Z|>t)\leq2\exp(-t^{2}/(2\sigma^{2}))$.
        It follows that 
        \[
        \mathbb{P}\left(|n^{-1/2}X_{l}^{\top}\varepsilon_{l}|>\sigma_{+}t\ {\rm and}\ \mathcal{A}\right)\leq2\mathbb{E}\exp\left[-\frac{t^{2}}{2}/\left(1+c_{1}\sqrt{n^{-1}\log p}\right)\right]\qquad\forall t>0.
        \]

        Therefore, 
        \begin{align}
         &\mathbb{P}\left(\max_{1\leq l\leq p}|n^{-1/2}X_{l}^{\top}\varepsilon_{l}|>\lambda_{1}\sigma_{+}\sqrt{\log p}\right)\nonumber  
         \\ & \leq\mathbb{P}(\mathcal{A}^{c})+\sum_{l=1}^{p}\mathbb{P}\left(|n^{-1/2}X_{l}^{\top}\varepsilon_{l}|>\lambda_{1}\sigma_{+}\sqrt{\log p}\ {\rm and}\ \mathcal{A}\right)\nonumber \\
        & \leq\mathbb{P}(\mathcal{A}^{c})+2p\exp\left[-\frac{\lambda_{1}^{2}\log p}{2\left(1+c_{1}\sqrt{n^{-1}\log p}\right)}\right]\overset{(i)}{=}o(1),\label{eq: naive example eq 3}
        \end{align}
        where $(i)$ holds by $\lambda_{1}>\sqrt{2}$ and (\ref{eq: naive test eq 2}).
        Since 
        $$Y=cn^{-1/2}X_{l}+\varepsilon_{l},$$ we have  $n^{-1/2}X_{l}^{\top}Y=n^{-1}\|X_{l}\|_{2}^{2}c+n^{-1/2}X_{l}^{\top}\varepsilon_{l}$.
        It follows, by (\ref{eq: naive example eq 3}) and (\ref{eq: naive test eq 2}),
        that 
        \begin{multline}
        \mathbb{P}\left(\max_{1\leq l\leq p}|n^{-1}X_{l}^{\top}Y|>\lambda_{1}\sigma_{+}\sqrt{n^{-1}\log p}+2cn^{-1/2}\right)\\
        \leq\mathbb{P}\left(\max_{1\leq l\leq p}|n^{-1/2}X_{l}^{\top}\varepsilon_{l}|>\lambda_{1}\sigma_{+}\sqrt{\log p}\right)+\mathbb{P}\left(\max_{1\leq l\leq p}n^{-1}\|X_{l}\|_{2}^{2}>2\right)=o(1).\label{eq: naive example eq 5}
        \end{multline}

        Notice that 
        \begin{align}
         &  \mathbb{P}\left(n^{-1/2}\|Y\|_{2}\lambda\geq\lambda_{1}\sigma_{+}\sqrt{n^{-1}\log p}+2cn^{-1/2}\right) \label{eq: naive example eq 6}\\
          =& \mathbb{P}\left(n^{-1/2}\|Y\|_{2}\lambda_{0}\sqrt{n^{-1}\log p}\geq\lambda_{1}\sigma_{+}\sqrt{n^{-1}\log p}+2cn^{-1/2}\right)\nonumber \\
         \geq & \mathbb{P}\left(n^{-1/2}\|Y\|_{2}\lambda_{0}\sqrt{n^{-1}\log p}\geq\lambda_{1}\sigma_{+}\sqrt{n^{-1}\log p}+2cn^{-1/2}\ {\rm and}\ \mathcal{A}\right)\nonumber \\
        \overset{(i)}{\geq} & \mathbb{P}\left(\left(1-n^{-1/2}\log^{1/4}p\right)\sigma_{*}\lambda_{0}\geq\lambda_{1}\sigma_{+}+2c/\sqrt{\log p}\ {\rm and}\ \mathcal{A}\right)\nonumber \\
         \geq & \mathbb{P}\left(\left(1-n^{-1/2}\log^{1/4}p\right)\sigma_{*}\lambda_{0}\geq\lambda_{1}\sigma_{+}+2c/\sqrt{\log p}\right)-\mathbb{P}(\mathcal{A}^{c})         \overset{(ii)}{\geq}  1-o(1),  \nonumber 
        \end{align}
        where $(i)$ holds by  (\ref{eq: naive test eq 2})  and $(ii)$  holds by (\ref{eq: naive test eq 2}) and $\lambda_{1}<\lambda_{0}$.
        Hence, by (\ref{eq: naive example eq 5}) and (\ref{eq: naive example eq 6}),
        we have $\mathbb{P}(\|n^{-1}X^{\top}Y\|_{\infty}\leq\lambda  n^{-1/2}\|Y\|_{2} )\rightarrow1$.
        The proof is complete. 
\end{proof}

\subsection{Proof of  Theorem  \ref{thm: approx assuming CLT} }
\begin{proof}[\textbf{Proof of Theorem \ref{thm: approx assuming CLT}}]
 For notational convenience, we denote by $\mathbb{P}_{\mathcal{F}_{n}}(\cdot)$
by $\mathbb{P}(\cdot\mid\mathcal{F}_{n})$. Let $\bar{G}=\max_{1\leq j\leq p} |G_{n,j}| $. We proceed in two steps,
where we show that (1) $\mathbb{P}_{\mathcal{F}_{n}}(\bar{G}\leq x)$
can be approximated by $\Gamma(x,Q)$ and (2) $\Gamma(x,Q)$ can be approximated by $\Gamma(x,\hat{Q})$. Since $\|\hat{Q}-Q\|_{\infty}=o_{P}(1)$, we
have that 
\begin{equation}
\mathbb{P}\left(\mathcal{J}_{n}\right)\rightarrow1,\qquad{\rm where}\qquad \mathcal{J}_{n}=\left\{ \hat{Q}_{j,j} \mbox{ and } Q_{j,j}\in[b_{1}/2,2b_{2}],\ \forall1\leq j\leq p\right\} .\label{eq: main prop eq 0.5}
\end{equation}

\vskip 10pt 
\textbf{Step 1: show that $\mathbb{P}_{\mathcal{F}_{n}}(\bar{G}\leq x)$
can be approximated by $\Gamma(x,Q)$.}
\vskip 10pt

Fix an arbitrary $\delta>0$ and define $\varepsilon_{n}=\delta/\sqrt{\log p}$.
Let $S^{\Psi}=n^{-1/2}\sum_{i=1}^{n}\Psi_{i}$, $Q_{S}^{\Psi}(x)=\mathbb{P}_{\mathcal{F}_{n}}\left(\|S^{\Psi}\|_{\infty}\leq x\right)$
and $q_{n}=\sup_{x\in\mathbb{R}}|\Gamma(x,Q)-Q_{S}^{\Psi}(x)|$. By
assumption, $q_{n}=o_{P}(1)$. Let $\xi\mid\mathcal{F}_{n}\sim \mathcal{N}(0,Q)$.
Notice that 
\begin{eqnarray}
 &  & \sup_{x\in\mathbb{R}}\left|\mathbb{P}_{\mathcal{F}_{n}}\left(\|S^{\Psi}\|_{\infty}\in(x-\varepsilon_{n},x+\varepsilon_{n}]\right)-\mathbb{P}_{\mathcal{F}_{n}}\left(\|\xi\|_{\infty}\in(x-\varepsilon_{n},x+\varepsilon_{n}]\right)\right|\nonumber \\
 & = & \sup_{x\in\mathbb{R}}\left|\left[Q_{S}^{\Psi}(x+\varepsilon_{n})-Q_{S}^{\Psi}(x-\varepsilon_{n})\right]-\left[\Gamma(x+\varepsilon_{n},Q)-\Gamma(x-\varepsilon_{n},Q)\right]\right|\nonumber \\
 & \leq & \sup_{x\in\mathbb{R}}\left|Q_{S}^{\Psi}(x+\varepsilon_{n})-\Gamma(x+\varepsilon_{n},Q)\right|+\sup_{x\in\mathbb{R}}\left|Q_{S}^{\Psi}(x-\varepsilon_{n})-\Gamma(x-\varepsilon_{n},Q)\right|\overset{(i)}{=}2q_{n},\label{eq: main prop eq 0.7}
\end{eqnarray}
where $(i)$ holds by the definition of $q_{n}$. Therefore, 
\begin{align}
& \sup_{x\in\mathbb{R}}\left|\mathbb{P}_{\mathcal{F}_{n}}(\bar{G}\leq x)-\Gamma(x,Q)\right| \label{eq: main prop eq 0.8}  \\ &  =\sup_{x\in\mathbb{R}}\left|\mathbb{P}_{\mathcal{F}_{n}}\left(\|S^{\Psi}+\Delta_{n}\|_{\infty}\leq x\right)-\Gamma(x,Q)\right| \nonumber \\
 & \leq\sup_{x\in\mathbb{R}}\left|\mathbb{P}_{\mathcal{F}_{n}}\left(\|S^{\Psi}+\Delta_{n}\|_{\infty}\leq x\right)-\mathbb{P}_{\mathcal{F}_{n}}\left(\|S^{\Psi}\|_{\infty}\leq x\right)\right|\nonumber \\
 & \quad+\sup_{x\in\mathbb{R}}\left|\mathbb{P}_{\mathcal{F}_{n}}\left(\|S^{\Psi}\|_{\infty}\leq x\right)-\Gamma(x,Q)\right|\nonumber \\
 & \overset{(i)}{\leq}\mathbb{P}_{\mathcal{F}_{n}}\left(\|\Delta_{n}\|_{\infty}>\varepsilon_{n}\right)+\sup_{x\in\mathbb{R}}\mathbb{P}_{\mathcal{F}_{n}}\left(\|S^{\Psi}\|_{\infty}\in(x-\varepsilon_{n},x+\varepsilon_{n}]\right)+q_{n}\nonumber \\
 & \overset{(ii)}{\leq}\mathbb{P}_{\mathcal{F}_{n}}\left(\|\Delta_{n}\|_{\infty}>\varepsilon_{n}\right)+\sup_{x\in\mathbb{R}}\mathbb{P}_{\mathcal{F}_{n}}\left(\|\xi\|_{\infty}\in(x-\varepsilon_{n},x+\varepsilon_{n}]\right)+3q_{n}\nonumber \\
 & \overset{(iii)}{\leq}\mathbb{P}_{\mathcal{F}_{n}}\left(\|\Delta_{n}\|_{\infty}>\varepsilon_{n}\right)+C_{b}\varepsilon_{n}\sqrt{\log p}+3q_{n}\ {\rm for\ some\ constant}\ C_{b}>0, \nonumber
\end{align}
where $(i)$ follows by Lemma \ref{lem: basic prob 1} and the definition
of $q_{n}$, $(ii)$ follows by (\ref{eq: main prop eq 0.7}) and
$(iii)$ follows by Lemma \ref{lem:anti-concentration}. It follows
that, $\forall\delta>0$, 
\[
\mathbb{E}\sup_{x\in\mathbb{R}}\left|\mathbb{P}_{\mathcal{F}_{n}}(\bar{G}\leq x)-\Gamma(x,Q)\right|\leq\mathbb{P}\left(\|\Delta_{n}\|_{\infty}\sqrt{\log p}>\delta\right)+C_{b}\delta+3\mathbb{E}q_{n}\overset{(i)}{=}o(1)+C_{b}\delta,
\]
where $(i)$ holds by $\|\Delta_{n}\|_{\infty}\sqrt{\log p}=o_{P}(1)$
and $\mathbb{E}q_{n}=o(1)$: since $q_{n}=o_{P}(1)$ and $q_{n}$
is bounded (hence uniformly integrable), Theorem 5.4 on page 220 of
\cite{gut2013probability} implies $\mathbb{E}q_{n}=o(1)$. Since
$\delta>0$ is arbitrary, we have 
$$\mathbb{E}\sup_{x\in\mathbb{R}}\left|\mathbb{P}_{\mathcal{F}_{n}}(\bar{G}\leq x)-\Gamma(x,Q)\right|=o(1).$$
By the Markov's inequality, 
\begin{equation}
\sup_{x\in\mathbb{R}}\left|\mathbb{P}_{\mathcal{F}_{n}}(\bar{G}\leq x)-\Gamma(x,Q)\right|=o_{P}(1).\label{eq: main prop eq 1}
\end{equation}

\vskip 10pt 
\textbf{Step 2: show that $\Gamma(x,Q)$ can be approximated by $\Gamma(x,\hat{Q})$. }
\vskip 10pt

Let $\hat{\xi}\mid\mathcal{G}_{n}\sim \mathcal{N}(0,\hat{Q})$, where $\mathcal{G}_{n}$
is the $\sigma$-algebra generated by $\mathcal{F}_{n}$ and $\hat{Q}$.
Let $\mathcal{D}(\xi)=(\xi^{\top},-\xi^{\top})^{\top}\in\mathbb{R}^{2p}$
and $\mathcal{D}(\hat{\xi})=(\hat{\xi}^{\top},\hat{\xi}^{\top})^{\top}\in\mathbb{R}^{2p}$.
Hence, we have that 
\begin{equation}
\mathcal{D}(\xi)\mid\mathcal{F}_{n}\sim \mathcal{N}\left(0,\begin{pmatrix}Q & -Q\\
-Q & Q
\end{pmatrix}\right)\qquad{\rm and}\qquad\mathcal{D}(\hat{\xi})\mid\mathcal{G}_{n}\sim \mathcal{N}\left(0,\begin{pmatrix}\hat{Q} & -\hat{Q}\\
-\hat{Q} & \hat{Q}
\end{pmatrix}\right).\label{eq: main prop 1.5}
\end{equation}

Therefore, on the event $\mathcal{J}_{n}$, 
\begin{align*}
\sup_{x\in\mathbb{R}}\left|\Gamma(x,Q)-\Gamma(x,\hat{Q})\right| & \overset{(i)}{=}\sup_{x\in\mathbb{R}}\left|\mathbb{P}_{\mathcal{F}_{n}}\left(\max_{1\leq j\leq2p}[\mathcal{D}(\xi)]_{j}\leq x\right)-\mathbb{P}_{\mathcal{G}_{n}}\left(\max_{1\leq j\leq2p}[\mathcal{D}(\hat{\xi})]_{j}\leq x\right)\right|\\
 & \overset{(ii)}{\leq}C\|\hat{Q}-Q\|_{\infty}^{1/3}\left[1\vee\log\left(p/\|\hat{Q}-Q\|_{\infty}\right)\right]^{2/3}
\end{align*}
for some constant $C>0$ depending only on $b_{1}$ and $b_{2}$;
$(i)$ holds by the fact $\max_{1\leq j\leq2p}[\mathcal{D}(\xi)]_{j}=\|\xi\|_{\infty}$
and $\max_{1\leq j\leq2p}[\mathcal{D}(\hat{\xi})]_{j}=\|\hat{\xi}\|_{\infty}$
and $(ii)$ holds by (\ref{eq: main prop 1.5}) and Lemma 3.1 of \cite{chernozhukov2013gaussian}.
Since $\|\hat{Q}-Q\|_{\infty}=o_{P}(1/\sqrt{\log p})$, it follows
by (\ref{eq: main prop eq 0.5}) and the above display, that
\begin{equation}
\sup_{x\in\mathbb{R}}\left|\Gamma(x,Q)-\Gamma(x,\hat{Q})\right|=o_{P}(1).\label{eq: main prop eq 2}
\end{equation}

The desired result follows by (\ref{eq: main prop eq 1}) and (\ref{eq: main prop eq 2}).
\end{proof}

\subsection{Proof of   Theorem  \ref{thm: ADDS}}

\begin{proof}[\textbf{Proof of Theorem \ref{thm: ADDS}}]
The arguments are  similar to the proof of Theorem 7.2 of \cite{bickel2009simultaneous}.
Let $\Delta=\hat{b}(\sigma_{*})-b_{*}$, $J={\rm supp}(b_{*})$ and
$s=|J|$. Here, for any vector $v=(v_1,\cdots,v_p)^\top$, we  define ${\rm supp}(v)=\{j\mid |v|_j>0\}$. The proof proceeds in two steps. In the first step, we show
that $\sigma_{*}^{2}/2\leq\|H-G\hat{b}(\sigma_{*})\|_{2}^{2}/n$;
in the second step, we show the desired results.

\vskip 10pt 
\textbf{Step 1: show $\sigma_{*}^{2}/2\leq\|H-G\hat{b}(\sigma_{*})\|_{2}^{2}/n$}
\vskip 10pt 

Since $\|n^{-1}G^{\top}(H-Gb_{*})\|_{\infty}=\|n^{-1}G^{\top}\varepsilon\|_{\infty}\leq\eta\sigma_{*}$
and $b_{*}\in\mathcal{B}(\sigma_{*})$, we have $\|\hat{b}(\sigma_{*})\|_{1}\leq\|b_{*}\|_{1}$
and thus $\|[\hat{b}(\sigma_{*})]_{J}\|_{1}+\|[\hat{b}(\sigma_{*})]_{J^{c}}\|_{1}\leq\|b_{*,J}\|_{1}$.
It follows, by the triangular inequality,  that 
\begin{align}
\|\Delta_{J^{c}}\|_{1} &=\|[\hat{b}(\sigma_{*})]_{J^{c}}\|_{1}\leq\|b_{*,J}\|_{1}-\|[\hat{b}(\sigma_{*})]_{J}\|_{1}\leq\|\Delta_{J}\|_{1}\ 
\\
 \ \|\Delta\|_1&=\|\Delta_J\|_1+\|\Delta_{J^c}\|_1\leq2\|\Delta_J\|_1.\label{eq: scaled DS eq 0.1}
\end{align}

Since $\|n^{-1}G^{\top}(H-Gb_{*})\|_{\infty}\leq\eta\sigma_{*}$ and $\|n^{-1}G^{\top}(H-G\hat{b}(*))\|_{\infty}\leq\eta\sigma_{*}$,
the triangular inequality implies that $\|n^{-1}G^{\top}G\Delta\|_{\infty}\leq 2\eta\sigma_{*}$. Then $$
n^{-1}\Delta^{\top}G^{\top}G\Delta \leq\|\Delta\|_{1}\|n^{-1}G^{\top}G\Delta\|_{\infty}\leq 2\eta\sigma_{*}\|\Delta\|_{1}
\overset{(i)}{\leq}4\|\Delta_{J}\|_{1}\eta\sigma_{*}\overset{(ii)}{\leq}4\sqrt{s}\|\Delta_{J}\|_{2}\eta\sigma_{*},
$$
where $(i)$ follows by (\ref{eq: scaled DS eq 0.1}) and $(ii)$ follows by Holder's inequality.
By (\ref{eq: RE condition}) and (\ref{eq: scaled DS eq 0.1}), $n^{-1}\Delta^{\top}G^{\top}G\Delta\geq\kappa\|\Delta_{J}\|_{2}^{2}$.
Therefore, the above display implies that 
\begin{equation}
\|\Delta_{J}\|_{2}\leq4\sqrt{s}\eta\sigma_{*}/\kappa.\label{eq: scaled DS eq 0.2}
\end{equation}
and 
\begin{equation}
n^{-1/2}\|G\Delta\|_{2}=\sqrt{n^{-1}\Delta^{\top}G^{\top}G\Delta}\leq\sqrt{4\sqrt{s}\|\Delta_{J}\|_{2}\eta\sigma_{*}}\leq4\eta\sigma_{*}\sqrt{s/\kappa}.\label{eq: scaled DS eq 0.3}
\end{equation}

Then 
\begin{equation}
n^{-1/2}\|Y-G\hat{b}(\sigma_{*})\|_{2}\geq n^{-1/2}\|\varepsilon\|_{2}-n^{-1/2}\|G\Delta\|_{2}\overset{(i)}{\geq}\sqrt{3\sigma_{*}^{2}/4}-4\eta\sigma_{*}\sqrt{s/\kappa}\overset{(ii)}{\geq}\frac{\sigma_{*}}{\sqrt{2}},\label{eq: scaled DS eq 0.4}
\end{equation}
where $(i)$ follows by $n^{-1}\|\varepsilon\|_{2}^{2}\geq3\sigma_{*}^{2}/4$
and (\ref{eq: scaled DS eq 0.3}) and $(ii)$ follows by $28\eta\sqrt{s/\kappa}\leq1$. 

\vskip 10pt 
\textbf{Step 2: show the desired results}
\vskip 10pt

Recall $\tilde{b}=\hat{b}(\tilde{\sigma})$ and define $\delta=\tilde{b}-b_{*}$.
By (\ref{eq: scaled DS eq 0.4}), we have $\sigma_{*}\leq\tilde{\sigma}$
and thus $\mathcal{B}(\sigma_{*})\subseteq\mathcal{B}(\tilde{\sigma})$
(by the property of $\mathcal{B}(\sigma_{1})\subseteq\mathcal{B}(\sigma_{2})$
for $\sigma_{1}\leq\sigma_{2}$), implying that $\hat{b}(\sigma_{*})\in\mathcal{B}(\tilde{\sigma})$.
By construction, the mapping $\sigma\mapsto\|\hat{b}(\sigma)\|_{1}$ is non-increasing. It follows that $\|\widetilde{b}\|_{1}=\|\hat{b}(\tilde{\sigma})\|_1\leq\|\hat{b}(\sigma_{*})\|_{1}$. Recall from Step 1, we have that $\|\hat{b}(\sigma_{*})\|_{1}\leq\|b_{*}\|_{1}$.
 Therefore,
$$\|\widetilde{b}\|_{1}\leq\|b_{*}\|_{1},$$
 which means that $\|\widetilde{b}_{J}\|_{1}+\|\widetilde{b}_{J^{c}}\|_{1}\leq\|b_{*,J}\|_{1}$
and thus $\|\widetilde{b}_{J^{c}}\|_{1}\leq\|b_{*,J}\|_{1}-\|\widetilde{b}_{J}\|_{1}$.
It follows, by the triangular inequality, that 
\begin{equation}
\|\delta_{J^{c}}\|_{1}=\|\widetilde{b}_{J^{c}}\|_{1}\leq\|b_{*,J}\|_{1}-\|\widetilde{b}_{J}\|_{1}\leq\|\delta_{J}\|_{1}\ {\rm and}\ \|\delta\|_1=\|\delta_J\|_1+\|\delta_{J^c}\|_1\leq2\|\delta_J\|_1.\label{eq: Scaled DS eq 1}
\end{equation}

Since $\|n^{-1}G^{\top}(H-G\widetilde{b})\|_{\infty}\leq\eta\tilde{\sigma}$
and $\|n^{-1}G^{\top}\varepsilon\|_{\infty}\leq\eta\sigma_{*}$, we
have that 
\begin{align}
\|n^{-1}G^{\top}G\delta\|_{\infty}&=\|n^{-1}G^{\top}(H-G\widetilde{b}-\varepsilon)\|_{\infty}
\nonumber\\
&\leq\|n^{-1}G^{\top}(H-G\widetilde{b})\|_{\infty}+\|n^{-1}G^{\top}\varepsilon\|_{\infty}\leq\eta(\tilde{\sigma}+\sigma_{*}).\label{eq: scaled DS eq 1.5}
\end{align}

Hence, 
\begin{align}
n^{-1}\delta^{\top}G^{\top}G\delta&\overset{(i)}{\leq}\|\delta\|_{1}\|n^{-1}G^{\top}G\delta\|_{\infty} \nonumber 
\\
&\overset{(ii)}{\leq}2\|\delta_{J}\|_{1}\eta(\tilde{\sigma}+\sigma_{*}) \nonumber
\\
&\overset{(iii)}{\leq}2\sqrt{s}\|\delta_{J}\|_{2}\eta(\tilde{\sigma}+\sigma_{*}),\label{eq: scaled DS eq 2}
\end{align}
 where $(i)$ follows by Holder's inequality, $(ii)$ follows by  (\ref{eq: Scaled DS eq 1}) and (\ref{eq: scaled DS eq 1.5}) and $(iii)$ follows by Holder's
inequality. By (\ref{eq: RE condition}) and (\ref{eq: Scaled DS eq 1}),
we have $n^{-1}\|G\delta\|_{2}^{2}\geq\kappa\|\delta_{J}\|_{2}^{2}$.
This and (\ref{eq: scaled DS eq 2}) imply that 
\begin{equation}
\|\delta_{J}\|_{2}\leq2\sqrt{s}\eta(\tilde{\sigma}+\sigma_{*})/\kappa.\label{eq: scaled DS eq 3}
\end{equation}
and thus 
\begin{equation}
n^{-1}\|G\delta\|_{2}^{2}\leq2\sqrt{s}\|\delta_{J}\|_{2}\eta(\tilde{\sigma}+\sigma_{*})\leq4s\eta^{2}(\tilde{\sigma}+\sigma_{*})^{2}/\kappa.\label{eq: scaled DS eq 4}
\end{equation}

Since $\tilde{\sigma}^{2}/2\leq\|H-G\widetilde{b}\|_{2}^{2}/n$, we have
that 
\begin{align}
\frac{\tilde{\sigma}}{\sqrt{2}}\leq n^{-1/2}\|H-G\widetilde{b}\|_{2} \nonumber
&=
n^{-1/2}\|\varepsilon-G\delta\|_{2} \nonumber
\\
&\leq n^{-1/2}\|G\delta\|_{2}+n^{-1/2}\|\varepsilon\|_{2} \nonumber
\\
&\overset{(i)}{\leq}2\sqrt{s/\kappa}\eta(\tilde{\sigma}+\sigma_{*})+\sqrt{2}\sigma_{*},\label{eq: scaled DS eq 4.5}
\end{align}
where $(i)$ follows by (\ref{eq: scaled DS eq 4}) and the assumption
$n^{-1}\|\varepsilon\|_{2}^{2}\leq2\sigma_{*}^{2}$. Regrouping (\ref{eq: scaled DS eq 4.5}),
we have 
\[
\left(1/\sqrt{2}-2\eta\sqrt{s/\kappa}\right)\tilde{\sigma}\leq\left(2\eta\sqrt{s/\kappa}+\sqrt{2}\right)\sigma_{*}.
\]

Since $2\eta\sqrt{\|b_{*}\|_{0}/\kappa}\leq1/14$, it follows that
\begin{equation}
\tilde{\sigma}\leq\frac{2\eta\sqrt{s/\kappa}+\sqrt{2}}{1/\sqrt{2}-2\eta\sqrt{s/\kappa}}\sigma_{*}\leq\frac{1/14+\sqrt{2}}{1/\sqrt{2}-1/14}\sigma_{*}<3\sigma_{*}.\label{eq: scaled DS eq 5}
\end{equation}

By the definition of $\tilde{\sigma}$ (\ref{eq: auto-scaling DS step 2}),
$\sigma_{*}\leq\tilde{\sigma}$. We have proved claim (\ref{ADDS part 1}). We prove claim
(\ref{ADDS part 2}) by combining (\ref{eq: scaled DS eq 4}) and (\ref{eq: scaled DS eq 5}).
We obtain claim (\ref{ADDS part 3}) by noticing that
\begin{align*}
\|\delta\|_{1}&\overset{(i)}{\leq}2\|\delta_J\|_1
\overset{(ii)}{\leq}2\sqrt{s}\|\delta_{J}\|_{2}
\overset{(iii)}{\leq}4s\eta(\tilde{\sigma}+\sigma_{*})/\kappa\overset{(iv)}{\leq}16s\eta\sigma_{*}/\kappa,
\end{align*}
where $(i)$ follows by (\ref{eq: Scaled DS eq 1}), \((ii)\)\ follows by Holder's inequality, $(iii)$ follows by (\ref{eq: scaled DS eq 3})
and $(iv)$ follows by (\ref{eq: scaled DS eq 5}). 

To see claim (\ref{ADDS part 4}), first notice that $\tilde{\sigma}/\sqrt{2}\leq n^{-1/2}\|H-G\widetilde{b}\|_{2}$
by the constraint in the optimization problem (\ref{eq: auto-scaling DS step 2}).
By (\ref{ADDS part 1}), we have $\sigma_{*}/\sqrt{2}\leq \tilde{\sigma}/\sqrt{2}\leq n^{-1/2}\|H-G\widetilde{b}\|_{2}$.
On the other hand, 
\begin{align*}
n^{-1/2}\|H-G\widetilde{b}\|_{2}
&\overset{(i)}{\leq}\sqrt{2}\sigma_{*}+2\sqrt{s/\kappa}\eta(\tilde{\sigma}+\sigma_{*})
\\
&\overset{(ii)}{\leq}(\sqrt{2}+8/28)\sigma_{*}<2\sigma_{*},
\end{align*}
where $(i)$ follows by (\ref{eq: scaled DS eq 4.5}) and $(ii)$
follows by $\tilde{\sigma}\leq3\sigma_{*}$ (due to (\ref{ADDS part 1})) and $\sqrt{s/\kappa}\eta\leq1/28$.
Claim (\ref{ADDS part 4}) follows. 

To see claim (\ref{ADDS part 5}), we combine (\ref{ADDS part 1}) and the constraint $\|n^{-1}G^{\top}(H-G\widetilde{b})\|_{\infty}\leq\eta\tilde{\sigma}$
in (\ref{eq: DS optimization step 1}), obtaining $\|n^{-1}G^{\top}(H-G\widetilde{b})\|_{\infty}\leq3\eta\sigma_{*}$.
Since $\sigma_{*}\leq\sqrt{2}\hat{\sigma}$
(from (\ref{ADDS part 4})), we have that $\|n^{-1}G^{\top}(H-G\widetilde{b})\|_{\infty}\leq3\sqrt{2}\eta\hat{\sigma}$.
Claim (\ref{ADDS part 5}) follows. The proof is complete. 
\end{proof}

\subsection{Proof of Theorem \ref{thm: size result}}

We introduce some notations that will be used in the rest of the paper.  
Let $\hat{Q}=n^{-1}\sum_{i=1}^{n}\hat{v}_{i}\hat{v}_{i}^{\top}\hat{u}_{i}^{2}\hat{\sigma}_{u}^{-2}$
with $(\hat{v}_{1},\cdots,\hat{v}_{n})^{\top}=\hat{V}=Z-W\widetilde{\Pi}$. Let $\sigma_{v,j}^2=\mathbb{E}v_{1,j}^2$.
\begin{lem}
\label{lem: bias}Consider Algorithm \ref{alg: main}. Let Assumption
\ref{assu: regularity condition} hold. Then
\begin{itemize}
\item[(1)] $\max_{1\leq j\leq p}\|W(\widetilde{\Pi}_{j}-\Pi_{*,j})\|_{2}^{2}=O_{P}(s_{\Pi}\log p)$
and $\max_{1\leq j\leq p}\|\widetilde{\Pi}_{j}-\Pi_{*,j}\|_{1}=O_{P}(s_{\Pi}\sqrt{n^{-1}\log p})$.
\item[(2)] $\|n^{-1/2}(\Pi_{*}-\widetilde{\Pi})^{\top}W^{\top}(Y-W\widetilde{\theta})\hat{\sigma}_{u}^{-1}\|_{\infty}=O_{P}(s_{\Pi}n^{-1/2}\log p)$.
\end{itemize} \end{lem}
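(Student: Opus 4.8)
The plan is to read both parts off the generalized ADDS theory in Theorem~\ref{thm: ADDS}. For part (1) I would apply that theorem to each of the $p$ node-wise regressions
$$Z_{j}=W\Pi_{*,j}+V_{j},\qquad V_{j}=Z_{j}-W\Pi_{*,j},\qquad 1\le j\le p,$$
identifying $H=Z_{j}$, $G=W$, $b_{*}=\Pi_{*,j}$, $\varepsilon=V_{j}$ and $\mathcal{B}(\sigma)=\mathbb{R}^{p}$ (no extra constraint is present in the Gaussian design). Under Gaussianity $V_{j}\perp W$, and by Lemma~\ref{lem:1} the rows $v_{i}$ are i.i.d.\ $\mathcal{N}(0,\Sigma_{V})$ with $\Sigma_{V}=4(\Sigma_{A}^{-1}+\Sigma_{B}^{-1})^{-1}$, whose eigenvalues are bounded away from $0$ and $\infty$ by Assumption~\ref{assu: regularity condition}(ii). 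Part (2) will then follow from the $\ell_{1}$ bound of part (1) together with a direct consequence of the optimization defining $\widetilde{\theta}$, and, crucially, will use no sparsity of $\theta_{*}$.

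For part (1) the work is to verify Assumption~\ref{assu:asds} simultaneously for all $j$ on a single event of probability tending to one, with $j$-independent constants. Taking $\sigma_{*,j}$ to be a fixed multiple of $\sigma_{v,j}=\sqrt{(\Sigma_{V})_{j,j}}$, I would check (ii) via concentration of the chi-square quantity $\|V_{j}\|_{2}^{2}/n$ around $\sigma_{v,j}^{2}$ with a union bound over $j$; (i) by conditioning on $W$, noting $n^{-1}W_{k}^{\top}V_{j}$ is then centered Gaussian with variance $n^{-2}\|W_{k}\|_{2}^{2}\sigma_{v,j}^{2}$, and applying a Gaussian tail bound with a union bound over the $p^{2}$ pairs $(j,k)$ after controlling $\max_{k}\|W_{k}\|_{2}^{2}/n$; this is where the calibration $\eta\asymp\sqrt{n^{-1}\log p}$ enters. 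Condition (iii) is vacuous, (iv) reduces to $\eta\sqrt{s_{\Pi}}\to0$, which holds since $s_{\Pi}=o(\sqrt{n/\log^{3}p})$, and the restricted-eigenvalue condition (v) at sparsity $s_{\Pi}$ holds for $W$ (rows i.i.d.\ $\mathcal{N}(0,\Sigma_{A}+\Sigma_{B})$ with bounded spectrum) by standard random-matrix arguments once $n\gg s_{\Pi}\log p$; being a property of $W$ alone, one event serves all $j$. On this event, (\ref{ADDS part 2}) gives $\|W(\widetilde{\Pi}_{j}-\Pi_{*,j})\|_{2}\le 8\sigma_{*,j}\eta\sqrt{n\|\Pi_{*,j}\|_{0}/\kappa}$, and squaring with $\eta^{2}\asymp n^{-1}\log p$ and $\|\Pi_{*,j}\|_{0}\le s_{\Pi}$ yields the uniform $O_{P}(s_{\Pi}\log p)$ bound, while (\ref{ADDS part 3}) yields the $\ell_{1}$ bound $O_{P}(s_{\Pi}\sqrt{n^{-1}\log p})$.

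For part (2), writing $\hat U=Y-W\widetilde{\theta}$, I would bound the $j$-th coordinate by H\"older,
$$\bigl|(\Pi_{*,j}-\widetilde{\Pi}_{j})^{\top}W^{\top}\hat U\bigr|\le \|\Pi_{*,j}-\widetilde{\Pi}_{j}\|_{1}\,\|W^{\top}\hat U\|_{\infty}.$$
The factor $\|n^{-1}W^{\top}\hat U\|_{\infty}$ is controlled without any reference to $\theta_{*}$: the constraint in (\ref{eq: def DS path}) evaluated at $\sigma=\tilde\sigma_{u}$ gives $\|n^{-1}W^{\top}\hat U\|_{\infty}\le\eta\tilde\sigma_{u}$, while the auto-scaling step (\ref{eq: auto-scaling DS}) forces $\hat\sigma_{u}^{2}=\|\hat U\|_{2}^{2}/n\ge\tilde\sigma_{u}^{2}/2$, i.e.\ $\tilde\sigma_{u}\le\sqrt{2}\hat\sigma_{u}$, so $\|n^{-1}W^{\top}\hat U\|_{\infty}\hat\sigma_{u}^{-1}\le\sqrt{2}\eta$. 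Combining with the uniform $\ell_{1}$ bound of part (1) and $\eta\asymp\sqrt{n^{-1}\log p}$,
$$n^{-1/2}\|(\Pi_{*}-\widetilde{\Pi})^{\top}W^{\top}\hat U\|_{\infty}\hat\sigma_{u}^{-1}\le\sqrt{2}\,n^{1/2}\eta\max_{j}\|\Pi_{*,j}-\widetilde{\Pi}_{j}\|_{1}=O_{P}\bigl(s_{\Pi}n^{-1/2}\log p\bigr),$$
which is the claim. The essential point is that this step uses only the two defining constraints of the ADDS for $\theta$ and so dispenses entirely with sparsity of $\theta_{*}$.

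The main obstacle is the uniform verification of Assumption~\ref{assu:asds} across all $p$ node-wise regressions in part (1): one must exhibit a single high-probability event on which (i), (ii), (iv) and the restricted eigenvalue (v) hold with constants independent of $j$, assembling the $p^{2}$-fold union bound for (i), the chi-square concentration for (ii), and the random-matrix control of the restricted eigenvalue of $W$, using $s_{\Pi}=o(\sqrt{n/\log^{3}p})$ and $\log p=o(\sqrt{n})$ to absorb all logarithmic factors. Once this event is secured, both parts are essentially algebraic.
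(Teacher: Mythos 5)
Your proposal is correct and follows essentially the same route as the paper: part (1) is obtained by applying Theorem~\ref{thm: ADDS} node-wise to $Z_{j}=W\Pi_{*,j}+V_{j}$ after verifying Assumption~\ref{assu:asds} uniformly over $j$ on a single high-probability event (the paper verifies the moment/correlation conditions via Bernstein's inequality for the sub-exponential entries of $w_{i}v_{i,j}$ where you condition on $W$ and use Gaussian tails, but these are interchangeable, and both invoke the same random-matrix result for the restricted eigenvalue of $W$). Part (2) is exactly the paper's argument: H\"older's inequality together with the two defining constraints of the ADDS for $\theta$, giving $\|n^{-1}W^{\top}(Y-W\widetilde{\theta})\|_{\infty}\hat{\sigma}_{u}^{-1}\leq\sqrt{2}\,\eta$ with no sparsity of $\theta_{*}$ used.
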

\begin{proof}[Proof of Lemma \ref{lem: bias}]
Define the event $\mathcal{A}=\bigcap_{j=1}^{p}\mathcal{A}_{j}$
with 
\[
\mathcal{A}_{j}=\{3\sigma_{v,j}^{2}/4\leq\|V_{j}\|_{2}^{2}n^{-1}\leq2\sigma_{v,j}^{2}\ {\rm and}\ \|n^{-1}W^{\top}V_{j}\|_{\infty}\leq\eta\sigma_{v,j}\}.
\]

Since $v_{i,j}^{2}$ and entries of $ w_{i}v_{i,j}$ have bounded sub-exponential norms (due to the sub-Gaussian property of $w_i $ and $v_{i,j} $), it follows,
by Bernstein's inequality and the union bound, that $\mathbb{P}(\mathcal{A})\rightarrow1$.
Moreover, by Theorem 6 in \cite{rudelson2013reconstruction}, the
restricted eigenvalue condition in (\ref{eq: RE condition}) (with
$G=W$) holds for some constant $\kappa>0$ with probability approaching
one. It follows, by Theorem \ref{thm: ADDS}, that 
$$\max_{1\leq j\leq p}\|W(\widetilde{\Pi}_{j}-\Pi_{*,j})\|_{2}^{2}=O_{P}(s_{\Pi}\log p)$$
and 
$$\max_{1\leq j\leq p}\|\widetilde{\Pi}_{j}-\Pi_{*,j}\|_{1}=O_{P}(s_{\Pi}\sqrt{n^{-1}\log p}).$$
This proves part (1). Observe that
\begin{align*}
\|n^{-1}W^{\top}(Y-W\widetilde{\theta})\|_{\infty}
&=\|n^{-1}W^{\top}(Y-W\hat{\theta}(\tilde{\sigma}_{u}))\|_{\infty}
\\
&\overset{(i)}{\leq}\eta\tilde{\sigma}_{u}
\\
&\overset{(ii)}{\leq}\eta\sqrt{2}n^{-1/2}\|Y-W\hat{\theta}(\tilde{\sigma}_{u})\|_{2}=\sqrt{2}\eta\hat{\sigma}_{u},
\end{align*}
where $(i)$ and $(ii)$ follow by (\ref{eq: def DS path}) and (\ref{eq: auto-scaling DS}),
respectively. Part (2) follows by Holder's inequality: 
\begin{align*}
& \left\Vert n^{-1/2}(\Pi_{*}-\widetilde{\Pi})^{\top}W^{\top}(Y-W\widetilde{\theta})\right\Vert _{\infty}\hat{\sigma}_{u}^{-1} \\
& \qquad \qquad  =\max_{1\leq j\leq p}\left|n^{-1/2}(\Pi_{*,j}-\widetilde{\Pi}_{j})^{\top}W^{\top}(Y-W\widetilde{\theta})\right|\hat{\sigma}_{u}^{-1}\\
 &\qquad \qquad  \leq\sqrt{n}\max_{1\leq j\leq p}\|\widetilde{\Pi}_{j}-\Pi_{*,j}\|_{1}\|n^{-1}W^{\top}(Y-W\widetilde{\theta})\|_{\infty}\hat{\sigma}_{u}^{-1}\\
 &\qquad \qquad  \leq\sqrt{n}\max_{1\leq j\leq p}\|\widetilde{\Pi}_{j}-\Pi_{*,j}\|_{1}\sqrt{2}\eta=O_{P}(s_{\Pi}n^{-1/2}\log p).
\end{align*}

The proof is complete.\end{proof}
\begin{lem}
\label{lem: approx variance}Consider Algorithm \ref{alg: main}.
Let Assumption \ref{assu: regularity condition} hold. Then 
\[
\|\hat{Q}-\mathbb{E}v_{1}v_{1}^{\top}\|_{\infty}=O_{P}\left(\left(s_{\Pi}n^{-1}\log p\right)\vee\sqrt{n^{-1}\log p}\right).
\]
\end{lem}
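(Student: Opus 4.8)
The plan is to benchmark the empirical second-moment matrix against its population target $\Sigma_V:=\mathbb{E}[v_1v_1^\top]$, which Lemma \ref{lem:1} identifies as $4(\Sigma_A^{-1}+\Sigma_B^{-1})^{-1}$, and to isolate the contribution of the estimation error in $\widetilde\Pi$. Writing $d_i$ for the $i$-th row of $W(\Pi_*-\widetilde\Pi)$, so that $\hat v_i=v_i+d_i$, I would expand $\hat v_i\hat v_i^\top=v_iv_i^\top+v_id_i^\top+d_iv_i^\top+d_id_i^\top$ and split the error into three blocks,
\[
\hat Q-\Sigma_V
=\underbrace{\Bigl(n^{-1}\sum_{i=1}^n v_iv_i^\top-\Sigma_V\Bigr)}_{T_1}
+\underbrace{n^{-1}\sum_{i=1}^n\bigl(v_id_i^\top+d_iv_i^\top\bigr)}_{T_2}
+\underbrace{n^{-1}\sum_{i=1}^n d_id_i^\top}_{T_3},
\]
each to be controlled separately in $\|\cdot\|_\infty$. (When the weighted form of $\hat Q$ is used, I would first condition on $\hat U$ and exploit $V\perp\hat U$ together with $n^{-1}\sum_i\hat\sigma_u^{-2}\hat u_i^2=1$, so that the $v_i$ remain i.i.d.\ $\mathcal N(0,\Sigma_V)$ and the same three-term split applies with the weights carried along.)

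For $T_1$, each entry $n^{-1}\sum_i(v_{i,j}v_{i,k}-\Sigma_{V,j,k})$ is an average of i.i.d.\ mean-zero variables that are sub-exponential, since $v_i$ is Gaussian with covariance bounded via Assumption \ref{assu: regularity condition}(ii) and Lemma \ref{lem:1}. Bernstein's inequality and a union bound over the $O(p^2)$ index pairs—permissible because $\log p=o(\sqrt n)$—give $\|T_1\|_\infty=O_P(\sqrt{n^{-1}\log p})$, the first of the two claimed rates. For $T_3$, Cauchy--Schwarz yields $|(T_3)_{j,k}|\le n^{-1}\|W(\widetilde\Pi_j-\Pi_{*,j})\|_2\,\|W(\widetilde\Pi_k-\Pi_{*,k})\|_2\le n^{-1}\max_{j}\|W(\widetilde\Pi_j-\Pi_{*,j})\|_2^2$, which is $O_P(s_\Pi n^{-1}\log p)$ by Lemma \ref{lem: bias}(1); this produces the second rate.

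For $T_2$, a representative entry is $n^{-1}V_j^\top W(\widetilde\Pi_k-\Pi_{*,k})$, which I would bound by Hölder as $\|n^{-1}W^\top V_j\|_\infty\,\|\widetilde\Pi_k-\Pi_{*,k}\|_1$. Lemma \ref{lem: bias}(1) supplies $\max_k\|\widetilde\Pi_k-\Pi_{*,k}\|_1=O_P(s_\Pi\sqrt{n^{-1}\log p})$, while for the first factor I would use that $w\perp v$ under the Gaussian design (as noted after \eqref{eq: graphical model}), so that $\mathbb E[v_{i,j}w_{i,l}]=0$ and $\max_{j,l}|n^{-1}\sum_i v_{i,j}w_{i,l}|=O_P(\sqrt{n^{-1}\log p})$ again by Bernstein and a union bound. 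Multiplying the two factors gives $\|T_2\|_\infty=O_P(s_\Pi n^{-1}\log p)$, and combining the three blocks yields $\|\hat Q-\Sigma_V\|_\infty=O_P\bigl((s_\Pi n^{-1}\log p)\vee\sqrt{n^{-1}\log p}\bigr)$.

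The main obstacle is $T_2$: because $\widetilde\Pi_k$ is itself a function of $V_k$ and $W$, the product $V_j^\top W(\widetilde\Pi_k-\Pi_{*,k})$ couples the random fluctuation factor with the estimation error on the same sample. The deterministic Hölder split above is the clean way around this, since it decouples a uniform high-probability control of $\max_{j,l}|n^{-1}\sum_i v_{i,j}w_{i,l}|$—which relies only on $v\perp w$ and not on any property of $\widetilde\Pi$—from the $\ell_1$-consistency of $\widetilde\Pi$ in Lemma \ref{lem: bias}. Care is needed to ensure both bounds hold simultaneously (on the intersection of the two high-probability events) and that the $\ell_\infty$/$\ell_1$ pairing introduces no logarithmic factors beyond those already tracked, so that the $T_2$ rate merges cleanly with $T_3$.
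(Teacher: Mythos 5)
Your proposal is correct and follows essentially the same route as the paper: the identical three-term expansion of $\hat v_i\hat v_i^\top - v_iv_i^\top$, Bernstein plus a union bound for the sampling fluctuation and for $\max_{j,l}|n^{-1}\sum_i v_{i,j}w_{i,l}|$, a H\"older split of the cross term against $\max_k\|\widetilde\Pi_k-\Pi_{*,k}\|_1$, and the quadratic term controlled by $\max_j\|W(\widetilde\Pi_j-\Pi_{*,j})\|_2^2$ from Lemma \ref{lem: bias}. No substantive difference from the paper's argument.
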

\begin{proof}[Proof of Lemma \ref{lem: approx variance}]
Notice that
\begin{eqnarray}
\|\hat{Q}-\mathbb{E}v_{1}v_{1}^{\top}\|_{\infty} & = & \left\Vert n^{-1}\sum_{i=1}^{n}\left(\hat{v}_{i}\hat{v}_{i}^{\top}-\mathbb{E}v_{1}v_{1}^{\top}\right)\right\Vert _{\infty}\nonumber \\
 & \leq & \left\Vert n^{-1}\sum_{i=1}^{n}\left(\hat{v}_{i}\hat{v}_{i}^{\top}-v_{i}v_{i}^{\top}\right)\right\Vert _{\infty}+\left\Vert n^{-1}\sum_{i=1}^{n}\left(v_{i}v_{i}^{\top}-\mathbb{E}v_{1}v_{1}^{\top}\right)\right\Vert _{\infty}.\label{eq: approx variance eq 1}
\end{eqnarray}

We bound both terms separately. Since entries in $v_{i}$ are Gaussian
with bounded variance, the entries in $v_{i}v_{i}^{\top}$ have sub-exponential
norms bounded above by some constant $K>0$. It follows, by Proposition
5.16 of \cite{vershynin2010introduction}, that $\forall t>0$, 
\begin{align*}
&\mathbb{P}\left(\left\Vert n^{-1}\sum_{i=1}^{n}\left(v_{i}v_{i}^{\top}-\mathbb{E}v_{1}v_{1}^{\top}\right)\right\Vert _{\infty}>t\sqrt{n^{-1}\log p}\right) 
\\
& \leq\sum_{j_{1}=1}^{p}\sum_{j_{2}=1}^{p}\mathbb{P}\left(\left|\sum_{i=1}^{n}v_{i,j_{1}}v_{i,j_{2}}-\mathbb{E}v_{1,j_{1}}v_{1,j_{2}}\right|>t\sqrt{n\log p}\right)\\
 & \leq2p^{2}\exp\left[-c\min\left(\frac{t^{2}n\log p}{K^{2}n},\frac{t\sqrt{n\log p}}{K}\right)\right],
\end{align*}
where $c>0$ is a universal constant. Therefore, by taking $t=4K^{2}/c$,
we have 
\begin{equation}
\left\Vert n^{-1}\sum_{i=1}^{n}\left(v_{i}v_{i}^{\top}-\mathbb{E}v_{1}v_{1}^{\top}\right)\right\Vert _{\infty}=O_{P}(\sqrt{n^{-1}\log p}).\label{eq: approx variance eq 2}
\end{equation}

Notice that $\hat{v}_{i}\hat{v}_{i}^{\top}-v_{i}v_{i}^{\top}=v_{i}w_{i}^{\top}(\Pi_{*}-\widetilde{\Pi})+(\Pi_{*}-\widetilde{\Pi})^{\top}w_{i}v_{i}^{\top}+(\Pi_{*}-\widetilde{\Pi})^{\top}w_{i}w_{i}^{\top}(\Pi_{*}-\widetilde{\Pi})$.
Therefore, 
\begin{align*}
&\left\Vert n^{-1}\sum_{i=1}^{n}\left(\hat{v}_{i}\hat{v}_{i}^{\top}-v_{i}v_{i}^{\top}\right)\right\Vert _{\infty}\\
 & \leq2\left\Vert n^{-1}\sum_{i=1}^{n}v_{i}w_{i}^{\top}(\Pi_{*}-\widetilde{\Pi})\right\Vert _{\infty}+\left\Vert n^{-1}\sum_{i=1}^{n}(\Pi_{*}-\widetilde{\Pi})^{\top}w_{i}w_{i}^{\top}(\Pi_{*}-\widetilde{\Pi})\right\Vert _{\infty}\\
 & = 2\max_{1\leq j\leq p}\max_{1\leq l\leq p}\left|n^{-1}\sum_{i=1}^{n}v_{i,l}w_{i}^{\top}(\Pi_{*,j}-\widetilde{\Pi}_{j})\right|+\max_{1\leq j\leq p}\left\Vert n^{-1}W(\widetilde{\Pi}_{j}-\Pi_{*,j})\right\Vert _{2}^{2}\\
 & \overset{(i)}{\leq}2\max_{1\leq l\leq p}\left\Vert n^{-1}\sum_{i=1}^{n}v_{i,l}w_{i}^{\top}\right\Vert _{\infty}\max_{1\leq j\leq p}\|\widetilde{\Pi}_{j}-\Pi_{*,j}\|_{1}+\max_{1\leq j\leq p}\left\Vert n^{-1}W(\widetilde{\Pi}_{j}-\Pi_{*,j})\right\Vert _{2}^{2}\\
 & \overset{(ii)}{\leq}\left\Vert n^{-1}\sum_{i=1}^{n}v_{i}w_{i}^{\top}\right\Vert _{\infty}O_{P}(s_{\Pi}\sqrt{n^{-1}\log p})+O_{P}(n^{-1}s_{\Pi}\log p),
\end{align*}
where $(i)$ follows by Holder's inequality and $(ii)$ follows by
Lemma \ref{lem: bias}. By the sub-Gaussian property of $v_{i}$ and
$w_{i}$, we can show that entries in $v_{i}w_{i}^{\top}$ have bounded
exponential norm and Proposition 5.16 of \cite{vershynin2010introduction}
and the union bound imply that $\|n^{-1}\sum_{i=1}^{n}v_{i}w_{i}^{\top}\|_{\infty}=O_{P}(\sqrt{n^{-1}\log p})$.
Therefore, 
\begin{equation}
\left\Vert n^{-1}\sum_{i=1}^{n}\left(\hat{v}_{i}\hat{v}_{i}^{\top}-v_{i}v_{i}^{\top}\right)\right\Vert _{\infty}=O_{P}(n^{-1}s_{\Pi}\log p).\label{eq: approx variance eq 3}
\end{equation}

The desired result follows by (\ref{eq: approx variance eq 1}), combined
with (\ref{eq: approx variance eq 2}) and (\ref{eq: approx variance eq 3}).
\end{proof}

\begin{proof}[\textbf{Proof of Theorem \ref{thm: size result}}]
Consider the test statistic (\ref{eq: test stat Gauss}). Assume that $H_{0}$
(\ref{eq: null hypo}) is true. We observe the following decomposition:
\begin{align}
T_{n}=\left\Vert n^{-1/2}\left(Z-W\widetilde{\Pi}\right)^{\top}\left(Y-W\widetilde{\theta}\right)\hat{\sigma}_{u}^{-1}\right\Vert _{\infty} & =\left\Vert n^{-1/2}\left(V+W(\Pi-\widetilde{\Pi})\right)^{\top}(Y-W\widetilde{\theta})\hat{\sigma}_{u}^{-1}\right\Vert _{\infty}\nonumber \\
 & =\left\Vert n^{-1/2}V^{\top}(Y-W\widetilde{\theta})\hat{\sigma}_{u}^{-1}+\Delta_{n}\right\Vert _{\infty},\label{eq: size decomposition}
\end{align}
where 
$$\Delta_{n}=n^{-1/2}(\Pi_{*}-\widetilde{\Pi})^{\top}W^{\top}(Y-W\widetilde{\theta})\hat{\sigma}_{u}^{-1}.$$
We invoke Theorem \ref{thm: approx assuming CLT} to obtain the desired
result.

Let $\mathcal{F}_{n}$ denote the $\sigma$-algebra generated by $U$
and $W$. Notice that under $H_{0}$, 
$$Y-W\widetilde{\theta}=U+W(\theta_{*}-\widetilde{\theta})$$ due to (\ref{eq:model2}).
Since $\widetilde{\theta}$ is a function of $(Y,W)$ (and thus a
function of $U$ and $W$), $\widetilde{\theta}$ is a function of
$U$ and $W$. Notice that $V$ is independent of $(U,W)$. It follows,
by the Gaussianity of $V$ and $n^{-1/2}\|Y-W\widetilde{\theta}\|_{2}=\hat{\sigma}_{u}$,
that 
\[
n^{-1/2}V^{\top}(Y-W\widetilde{\theta})\hat{\sigma}_{u}^{-1}\mid\mathcal{F}_{n}\sim \mathcal{N}(0,Q),
\]
where $Q=\mathbb{E}v_{1}v_{1}^{\top}$. In other words,
\[
\mathbb{P}\left(\|n^{-1/2}V^{\top}(Y-W\widetilde{\theta})\hat{\sigma}_{u}^{-1}\|_{\infty}\leq x\mid\mathcal{F}_{n}\right)=\Gamma(x,Q)\qquad\forall x\geq0.
\]

By Lemmas \ref{lem: bias} and \ref{lem: approx variance}, together
with $n^{-1}s_{\Pi}^{2}\log^{3}p=o(1)$, we have that 
\[
\|\Delta_{n}\|_{\infty}=o_{P}(1/\sqrt{\log p})\qquad{\rm and}\qquad\|\hat{Q}-Q\|_{\infty}=o_{P}(1/\sqrt{\log p}).
\]

Therefore, we have verified all the assumptions of Theorem \ref{thm: approx assuming CLT},
which then implies the desired result. 
\end{proof}

\subsection{Proof of Theorem \ref{thm: main result power sparse}}
\begin{lem}
\label{lem: sigma_star sparse alter}Consider Algorithm \ref{alg: main}.
Let $U(\gamma_{*})=V\gamma_{*}+U$. Suppose that Assumption \ref{assu: regularity power}
holds. Then with probability approaching one, 
\begin{itemize}
\item[] $3\sigma_{*}^{2}/4\leq n^{-1}\|U(\gamma_{*})\|_{2}^{2}\leq2\sigma_{*}^{2}$
 \item[] $\|n^{-1}W^{\top}U(\gamma_{*})\|_{\infty}\leq\eta\sigma_{*}$,
where $\sigma_{*}=\sqrt{\gamma_{*}^{\top}\Sigma_{V}\gamma_{*}+\sigma_{u}^{2}}$. 
\end{itemize}
\end{lem}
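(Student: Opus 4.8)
The plan is to recognize this lemma as verifying conditions (i) and (ii) of Assumption \ref{assu:asds} for the ADDS estimator $\widetilde\theta$ when the alternative holds. Writing $Z=V+W\Pi_{*}$ and substituting into the convolved model \eqref{eq:model2} gives $Y=W(\theta_{*}+\Pi_{*}\gamma_{*})+V\gamma_{*}+U$, so $\widetilde\theta$ targets $b_{*}=\theta_{*}+\Pi_{*}\gamma_{*}$ with effective error $\varepsilon=U(\gamma_{*})=V\gamma_{*}+U$, and the two displayed inequalities are exactly Assumption \ref{assu:asds}(ii) and (i) with $\sigma_{*}=\sqrt{\gamma_{*}^{\top}\Sigma_{V}\gamma_{*}+\sigma_{u}^{2}}$. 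The starting point is the independence coming from the Gaussian design inherited through Assumption \ref{assu: regularity power}: $V$ is a function of $(X_{A},X_{B})$ and is independent of $W$, while $U$ depends only on the errors and is independent of both $W$ and $V$. Hence the scalars $U(\gamma_{*})_{i}=v_{i}^{\top}\gamma_{*}+u_{i}$ are i.i.d.\ across $i$, mean zero, with $v_{i}^{\top}\gamma_{*}\sim\mathcal N(0,\gamma_{*}^{\top}\Sigma_{V}\gamma_{*})$ independent of $u_{i}$, and $\mathbb E[U(\gamma_{*})_{i}^{2}]=\gamma_{*}^{\top}\Sigma_{V}\gamma_{*}+\sigma_{u}^{2}=\sigma_{*}^{2}$.

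For the first claim I would normalize by $\sigma_{*}^{2}$ and expand
\[
\frac{\|U(\gamma_{*})\|_{2}^{2}}{n\sigma_{*}^{2}}=\frac{1}{\sigma_{*}^{2}}\Big(n^{-1}\sum_{i}(v_{i}^{\top}\gamma_{*})^{2}+2n^{-1}\sum_{i}(v_{i}^{\top}\gamma_{*})u_{i}+n^{-1}\sum_{i}u_{i}^{2}\Big).
\]
Set $\rho=\gamma_{*}^{\top}\Sigma_{V}\gamma_{*}/\sigma_{*}^{2}\in[0,1]$. The first term concentrates around its mean $\gamma_{*}^{\top}\Sigma_{V}\gamma_{*}$ with relative error $o_{P}(1)$, by the law of large numbers for the i.i.d.\ sub-exponential (squared Gaussian) summands, so after dividing it equals $\rho+o_{P}(1)$; the third term equals $(1-\rho)\,\big(n^{-1}\sum_{i}u_{i}^{2}/\sigma_{u}^{2}\big)=(1-\rho)+o_{P}(1)$ by the weak law of large numbers, which applies since $\mathbb E|u_{i}|^{2+\delta}<\infty$ forces $\mathbb E u_{i}^{2}<\infty$; and the cross term has mean zero and variance $4n^{-1}\rho(1-\rho)\le n^{-1}=o(1)$, hence is $o_{P}(1)$ by Chebyshev. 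Since $\rho\in[0,1]$ all remainders are $o_{P}(1)$ uniformly in $\gamma_{*}$, so the ratio converges to $\rho+(1-\rho)=1$ in probability and therefore lies in $[3/4,2]$ with probability tending to one.

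For the second claim I would condition on $(V,U)$, which freezes $U(\gamma_{*})$ while leaving $W$ Gaussian and independent. Marginally $w_{i,j}\sim\mathcal N(0,(\Sigma_{W})_{j,j})$ with $\Sigma_{W}=\Sigma_{A}+\Sigma_{B}$, and these are independent across $i$, so conditionally the $j$-th coordinate $n^{-1}\sum_{i}w_{i,j}U(\gamma_{*})_{i}$ is $\mathcal N\big(0,(\Sigma_{W})_{j,j}\|U(\gamma_{*})\|_{2}^{2}/n^{2}\big)$. On the event from the first claim, $\|U(\gamma_{*})\|_{2}^{2}/n\le 2\sigma_{*}^{2}$, and Assumption \ref{assu: regularity condition}(ii) gives $(\Sigma_{W})_{j,j}\le 2\kappa_{2}$, so the conditional variance is at most $4\kappa_{2}\sigma_{*}^{2}/n$. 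A Gaussian tail bound at $t=\eta\sigma_{*}$ together with a union bound over the $p$ coordinates yields a failure probability at most $2\exp\!\big((1-\eta_{0}^{2}/(8\kappa_{2}))\log p\big)$, where $\eta=\eta_{0}\sqrt{n^{-1}\log p}$; choosing the tuning constant $\eta_{0}>\sqrt{8\kappa_{2}}$, as permitted by $\eta\asymp\sqrt{n^{-1}\log p}$, makes this vanish. Intersecting with the event of the first claim gives the second inequality with probability tending to one.

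The main obstacle is the interaction of heavy-tailed errors with a $\gamma_{*}$ that may drift with $n$. Because $u_{i}$ has only $2+\delta$ moments, exponential concentration is unavailable for $n^{-1}\sum_{i}u_{i}^{2}$, so the argument must isolate that term and use only the weak law; normalizing by $\sigma_{*}^{2}$ and exploiting $\rho\in[0,1]$ is what renders every remainder $o_{P}(1)$ uniformly in $\gamma_{*}$. The decisive device for the second claim is conditioning on $(V,U)$: even when $U(\gamma_{*})$ is heavy-tailed, freezing it makes $n^{-1}W^{\top}U(\gamma_{*})$ purely Gaussian with variance controlled by $\|U(\gamma_{*})\|_{2}^{2}/n$, which the first claim already bounds — precisely the mechanism underlying the test's robustness to the error distribution.
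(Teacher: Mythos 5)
Your proof is correct and follows essentially the same route as the paper: claim (i) by a law of large numbers for $n^{-1}\|U(\gamma_*)\|_2^2\sigma_*^{-2}$, and claim (ii) by conditioning on the (frozen) error vector, exploiting $W\perp(V,U)$, and applying a Gaussian tail bound with a union bound over the $p$ coordinates --- the paper packages the latter step as Lemma \ref{lem: hoeffding plus union} with $x_{i,l}=w_{i,l}$ and $h_{i,l}=(v_i^\top\gamma_*+u_i)/\sigma_*$, which is exactly your conditioning argument. The one substantive difference is in claim (i): the paper asserts that the entries of $U(\gamma_*)/\sigma_*$ are i.i.d.\ $\mathcal{N}(0,1)$ so that $n^{-1}\|U(\gamma_*)\|_2^2\sigma_*^{-2}$ is an average of $\chi^2(1)$ variables, which implicitly uses Gaussianity of $u_i$ even though Assumption \ref{assu: regularity power} only grants $\mathbb{E}|u_i|^{2+\delta}<\infty$; your three-term decomposition (sub-exponential concentration for $(v_i^\top\gamma_*)^2$, the weak law for $u_i^2$, Chebyshev for the cross term, all normalized so the remainders are controlled uniformly via $\rho\in[0,1]$) handles the non-Gaussian, possibly $n$-dependent triangular array correctly and is the more faithful argument under the stated assumptions. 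Both versions share the same implicit requirement that the proportionality constant in $\eta\asymp\sqrt{n^{-1}\log p}$ be taken large enough relative to $\kappa_2$, which you make explicit.
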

\begin{proof}[Proof of Lemma \ref{lem: sigma_star sparse alter}]
Notice that the entries of $U(\gamma_{*})/\sigma_{*}$ are i.i.d
$\mathcal{N}(0,1)$ random variables that are independent of $W$. We apply
Lemma \ref{lem: hoeffding plus union} with $L=p$, $x_{i,l}=w_{i,l}$
and $h_{i,l}=(v_{i}^{\top}\gamma_{*}+u_{i})/\sigma_{*}$ for $1\le l\leq L$.
It follows that 
\[
\mathbb{P}\left(\|n^{-1}W^{\top}U(\gamma_{*})\|_{\infty}>\eta \sigma_*\right)=\mathbb{P}\left(\|n^{-1}W^{\top}U(\gamma_{*})/\sigma_*\|_{\infty}>\eta\right)\rightarrow0.
\]

Notice that $n^{-1}\|U(\gamma_{*})\|_{2}^{2}\sigma_{*}^{-2}$ is the
average of independent $\chi^{2}(1)$ random variables. By the law
of large numbers, $n^{-1}\|U(\gamma_{*})\|_{2}^{2}\sigma_{*}^{-2}=1+o_{P}(1)$.
Therefore, 
$$3/4\leq n^{-1}\|U(\gamma_{*})\|_{2}^{2}\sigma_{*}^{-2}\leq2$$
with probability approaching one. The proof is complete. 
\end{proof}

\begin{proof}[\textbf{Proof of Theorem \ref{thm: main result power sparse}}]
By (\ref{eq:model2}) and (\ref{eq: graphical model}), we have   that $Y=W\theta_{*}+Z\gamma_{*}+U=W(\theta_{*}+\Pi_{*}\gamma_{*})+V\gamma_{*}+U$.
We apply Theorem \ref{thm: ADDS} with $(H,G,b_{*},\varepsilon)=(Y,W,\theta_{*}+\Pi_{*}\gamma_{*},V\gamma_{*}+U)$
and $\mathcal{B}(\sigma)=\mathbb{R}^{p}$ $\forall\sigma\geq0$. Notice
that, by Theorem 6 in \cite{rudelson2013reconstruction}, the restricted
eigenvalue condition in (\ref{eq: RE condition}) holds for some constant
$\kappa>0$ with probability approaching one. Also notice that 
$$\|\theta_{*}+\Pi_{*}\gamma_{*}\|_{0}\leq\|\theta_{*}\|_{0}+\|\Pi_{*}\gamma_{*}\|_{0}\leq\|\theta_{*}\|_{0}+\max_{1\leq j\leq p}\|\Pi_{*,j}\|_{0}\cdot\|\gamma_{*}\|_{0}=:s_{*}.$$
Thus, by Theorem \ref{thm: ADDS} and Lemma \ref{lem: sigma_star sparse alter},
together with $s_{*}^{2}n^{-1}\log p\rightarrow0$, it
follows that, with probability approaching one, 
\begin{equation}
\begin{cases}
\hat{\sigma}_{*}/\sqrt{2}\leq \hat{\sigma}_{u}\leq 2\sigma_{*} & (1)\\
\|\theta_{*}+\Pi_{*}\gamma_{*}-\widetilde{\theta}\|_{1}\hat{\sigma}_{u}^{-1}\leq16 s_{*}\eta\sigma_{*}\hat{\sigma}_{u}^{-1}/\kappa\leq16\sqrt{2}s_{*}\eta/\kappa & (2)\\
\|n^{-1}W^{\top}(Y-W\widetilde{\theta})\|_{\infty}\hat{\sigma}_{u}^{-1}\leq3\sqrt{2}\eta & (3)
\end{cases}\label{eq: power sparse eq 1}
\end{equation}
where $\sigma_{*}=\sqrt{\gamma_{*}^{\top}\Sigma_{V}\gamma_{*}+\sigma_{u}^{2}}$. 
Let $j_{*}\in\{1,\cdots,p\}$
satisfy that $\|\Sigma_{V}\gamma_{*}\|_{\infty}=|e_{j_{*}}^{\top}\Sigma_{V}\gamma_{*}|$,
where $e_{j_{*}}\in\mathbb{R}^{p}$ is the $j_{*}$th column of $I_{p}$. 

\vskip 10pt 
\textbf{Step 1: derive the behavior of the test statistic.}
\vskip 10pt 

By the triangular inequality, we have
\begin{align}
 & T_{n}=  \left\Vert n^{-1/2}\left(Z-W\widetilde{\Pi}\right)^{\top}\left(Y-W\widetilde{\theta}\right)\hat{\sigma}_{u}^{-1}\right\Vert _{\infty} \label{eq: power sparse eq 1.5}  \\
  \geq & \left|n^{-1/2}\left(Z_{j_{*}}-W\widetilde{\Pi}_{j_{*}}\right)^{\top}\left(Y-W\widetilde{\theta}\right)\hat{\sigma}_{u}^{-1}\right|\nonumber \\
 = & \left|n^{-1/2}\left(V_{j_{*}}+W(\Pi_{*,j_{*}}-\widetilde{\Pi}_{j_{*}})\right)^{\top}\left(W(\theta_{*}+\Pi_{*}\gamma_{*}-\widetilde{\theta})+V\gamma_{*}+U\right)\hat{\sigma}_{u}^{-1}\right|\nonumber \\
  \geq & \underset{J_{1}}{\underbrace{\left|n^{-1/2}V_{j_{*}}^{\top}V\gamma_{*}\hat{\sigma}_{u}^{-1}\right|}}-\underset{J_{2}}{\underbrace{\left|n^{-1/2}V_{j_{*}}^{\top}W(\theta_{*}+\Pi_{*}\gamma_{*}-\widetilde{\theta})\hat{\sigma}_{u}^{-1}\right|}}-\underset{J_{3}}{\underbrace{\left|n^{-1/2}V_{j_{*}}^{\top}U\hat{\sigma}_{u}^{-1}\right|}}\nonumber \\
   & -\underset{J_{4}}{\underbrace{\left|n^{-1/2}(\Pi_{*,j_{*}}-\widetilde{\Pi}_{j_{*}})^{\top}W^{\top}\left(W(\theta_{*}+\Pi_{*}\gamma_{*}-\widetilde{\theta})+V\gamma_{*}+U\right)\hat{\sigma}_{u}^{-1}\right|}}. \nonumber
\end{align}

Notice that 
\begin{align}
J_{4}&=\left|n^{-1/2}(\Pi_{*,j_{*}}-\widetilde{\Pi}_{j})^{\top}W^{\top}\left(Y-W\widetilde{\theta}\right)\hat{\sigma}_{u}^{-1}\right|
\nonumber 
\\
& \overset{(i)}{\leq}\sqrt{n}\|\widetilde{\Pi}_{j_{*}}-\Pi_{*,j_{*}}\|_{1}\|n^{-1}W^{\top}(Y-W\widetilde{\theta})\|_{\infty}\hat{\sigma}_{u}^{-1}\nonumber \\
 & \overset{(ii)}{\leq}\sqrt{n}O_{P}(s_{\Pi}\sqrt{n^{-1}\log p})3\sqrt{2}\eta\overset{(iii)}{=}o_{P}(1), \label{eq: power sparse eq 2}
\end{align}
where $(i)$ follows by Holder's inequality, $(ii)$ follows by $\max_{1\leq j\leq p}\|\widetilde{\Pi}_{j}-\Pi_{*,j}\|_{1}=O_{P}(s_{\Pi}\sqrt{n^{-1}\log p}) $ (due to Lemma \ref{lem: bias})
and (\ref{eq: power sparse eq 1})(3) and $(iii)$ holds by $s_{\Pi}=o(\sqrt{n}/\log p)$. 

Since $v_{i,j_{*}}$ has bounded sub-Gaussian norms, the law of large
numbers implies that $n^{-1}\sum_{i=1}^{n}v_{i,j_{*}}^{2}=\Sigma_{V,j_{*},j_{*}}+o_{P}(1)$
and hence 
$$\mathbb{P}\left(n^{-1}\sum_{i=1}^{n}v_{i,j_{*}}^{2}>C_{1}+1\right)\rightarrow0$$
for any constant $C_{1}\geq\max_{1\leq j\leq p}\Sigma_{V,j,j}$. By
the sub-Gaussian property of $w_{i}$, we apply Lemma \ref{lem: hoeffding plus union}
(with $x_{i,j}=w_{i,j}$ and $h_{i,j}=v_{i,j_{*}}$ for $1\leq j\leq p$).
It follows that 
$$\|n^{-1/2}V_{j_{*}}^{\top}W\|_{\infty}=\max_{1\leq j\leq p}\left\vert n^{-1/2}\sum_{i=1}^{n}w_{i,j}v_{i,j_{*}}\right\vert=O_{P}(\sqrt{\log p}).$$
By Holder's inequality and (\ref{eq: power sparse eq 1})(2), it follows
that 
\begin{equation}
J_{2}\leq\|n^{-1/2}V_{j_{*}}^{\top}W\|_{\infty}\|\theta_{*}+\Pi_{*}\gamma_{*}-\widetilde{\theta}\|_{1}\hat{\sigma}_{u}^{-1}\leq O_{P}(\sqrt{\log p})16\sqrt{2}s_{*}\eta/\kappa\overset{(i)}{=}o_{P}(1) \label{eq: power sparse eq 2.55},
\end{equation}
where $(i)$ holds by $s_{*}=o(\sqrt{n}/\log p)$. 

Notice that $\mathbb{E}|v_{i,j_{*}} u_{i}|^{2+\delta} = \mathbb{E}|v_{i,j_{*}}|^{2+\delta} \mathbb{E}|u_{i}|^{2+\delta} $ is bounded by a constant $C_2>0$, where $\delta>0$ is the constant in Assumption \ref{assu: regularity power}. The Lyapunov's central limit theorem implies that 
$$|n^{-1/2}V_{j_{*}}^{\top}U|=\left\vert n^{-1/2} \sum_{i=1}^{n }v_{i,j_{*}}u_{i}\right\vert=O_{P}(1).$$ Hence, 
\begin{equation}
\mathbb{P}\left(J_{3}>\sqrt{\log p}\right)=\mathbb{P}\left(\left|n^{-1/2}V_{j_{*}}^{\top}U\right|>\hat{\sigma}_{u}\sqrt{\log p}\right)\overset{(i)}{\leq}\mathbb{P}\left(|n^{-1/2}V_{j_{*}}^{\top}U|>\sigma_{*}\sqrt{\log (p)/2}\right)=o_{P}(1),\label{eq: power sparse eq 3}
\end{equation}
where $(i)$ holds by (\ref{eq: power sparse eq 1})(1).

Due to the (sub)-Gaussian property of $v_i$ and the definition of $\sigma_{*}$, $\mathbb{E}|v_{i,j_{*}}v_{i}^\top \gamma_{*}/\sigma_{*}|^3$ is bounded above by a constant. Again, the Lyapunov's central limit theorem implies that 
$$
\sqrt{n}\left|n^{-1}V_{j_{*}}^{\top}V\gamma_{*}-e_{j_{*}}^{\top}\Sigma_{V}\gamma_{*}\right|/\sigma_{*}=\left\vert n^{-1/2} \sum_{i=1}^{n}\left[(v_{i,j_{*}}v_{i}^\top \gamma_{*}/\sigma_{*})-\mathbb{E}(v_{i,j_{*}}v_{i}^\top \gamma_{*}/\sigma_{*})\right]\right\vert=O_{P}(1).
$$
Hence, it follows, by (\ref{eq: power sparse eq 1})(1), that
\begin{multline}
\mathbb{P}\left(\sqrt{n}\left|n^{-1}V_{j_{*}}^{\top}V\gamma_{*}-e_{j_{*}}^{\top}\Sigma_{V}\gamma_{*}\right|\hat{\sigma}_{u}^{-1}>\sqrt{\log p}\right) \\ \leq \mathbb{P}\left(\sqrt{n}\left|n^{-1}V_{j_{*}}^{\top}V\gamma_{*}-e_{j_{*}}^{\top}\Sigma_{V}\gamma_{*}\right|/\sigma_{*}>\sqrt{\log (p)/2}\right) =o_{P}(1).\label{eq: power sparse eq 4}
\end{multline}

Therefore, for any $K>0$, 
\begin{align*}
 & \mathbb{P}\left(T_{n}>(K-4)\sqrt{\log p}\right)\\
 & \overset{(i)}{\geq}\mathbb{P}\left(J_{1}>(K-1)\sqrt{\log p}\right)-\mathbb{P}\left(J_{2}>\sqrt{\log p}\right)-\mathbb{P}\left(J_{3}>\sqrt{\log p}\right)-\mathbb{P}\left(J_{4}>\sqrt{\log p}\right)\\
 & \overset{(ii)}{=}\mathbb{P}\left(J_{1}>(K-1)\sqrt{\log p}\right)-o(1)\\
 & \geq\mathbb{P}\left(\left\vert e_{j_{*}}^{\top}\Sigma_{V}\gamma_{*} \right\vert\hat{\sigma}_{u}^{-1}>K\sqrt{n^{-1}\log p}\right)-\mathbb{P}\left(\left|n^{-1}V_{j_{*}}^{\top}V\gamma_{*}-e_{j_{*}}^{\top}\Sigma_{V}\gamma_{*}\right|\hat{\sigma}_{u}^{-1}>\sqrt{n^{-1}\log p}\right)-o(1)\\
 & \overset{(iii)}{=}\mathbb{P}\left(\left\vert e_{j_{*}}^{\top}\Sigma_{V}\gamma_{*} \right\vert>K\hat{\sigma}_{u}\sqrt{n^{-1}\log p}\right)-o(1)\\
 & \overset{(iv)}{\geq}\mathbb{P}\left(\left\vert e_{j_{*}}^{\top}\Sigma_{V}\gamma_{*} \right\vert>2K\sigma_{*}\sqrt{n^{-1}\log p}\right)-o(1),
\end{align*}
where $(i)$ holds by (\ref{eq: power sparse eq 1.5}) and the sub-additivity
of probability measures, $(ii)$ holds by (\ref{eq: power sparse eq 2}), (\ref{eq: power sparse eq 2.55})
and (\ref{eq: power sparse eq 3}), $(iii)$ holds by (\ref{eq: power sparse eq 4})
and $(iv)$ holds by (\ref{eq: power sparse eq 1})(1). Recall that
$$\sigma_{*}^{2}=\gamma_{*}^{\top}\Sigma_{V}\gamma_{*}+\sigma_{u}^{2}.$$
Therefore, there exist constants $C_{3},C_{4}>0$ such that 
$$\sigma_{*}^{2}\leq C_{3}\|\gamma_{*}\|_{2}^{2}+C_{4}\leq(\sqrt{C_{3}}\|\gamma_{*}\|_{2}+\sqrt{C_{4}})^{2}.$$
Hence, the above display implies that for any $K>0$, 
\begin{multline}
\mathbb{P}\left(T_{n}>(K-4)\sqrt{\log p}\right)\geq\mathbb{P}\left(\left\vert e_{j_{*}}^{\top}\Sigma_{V}\gamma_{*} \right\vert>2K\sigma_{*}\sqrt{n^{-1}\log p}\right)-o(1)\\
\geq\mathbb{P}\left(\left\vert e_{j_{*}}^{\top}\Sigma_{V}\gamma_{*} \right\vert>2K[\sqrt{C_{3}}\|\gamma_{*}\|_{2}+\sqrt{C_{4}}]\sqrt{n^{-1}\log p}\right)-o(1).\label{eq: power sparse eq 5}
\end{multline}

\vskip 10pt 
\textbf{Step 2: derive the behavior of the critical value.}
\vskip 10pt 

Recall the elementary inequality that for $\xi\sim \mathcal{N}(0,1)$, $\mathbb{P}(|\xi|>x)\leq2\exp(-x^{2}/2)$
for all $x>0$. By the union bound, we have that 
$$1-\Gamma(x,\hat{Q})=\mathbb{P}(\|\zeta\|_{\infty}>x\mid\hat{Q})\leq2p\exp[-x^{2}/(2\|\hat{Q}\|_{\infty})],$$
where $\zeta\mid\hat{Q}\sim \mathcal{N}(0,\hat{Q})$. It follows that, $\forall\alpha\in(0,1)$,
\[
\Gamma^{-1}(1-\alpha,\hat{Q})\leq\sqrt{-2\|\hat{Q}\|_{\infty}\log(\alpha/(2p))}.
\]

Let $C_{5}>0$ be a constant such that $\|\mathbb{E}v_{1}v_{1}^{\top}\|_{\infty}\leq C_{5}$.
By Lemma \ref{lem: approx variance}, $\mathbb{P}(\|\hat{Q}\|_{\infty}\leq2C_{5})\rightarrow1$.
Hence, 
\begin{equation}
\mathbb{P}\left(\Gamma^{-1}(1-\alpha,\hat{Q})>2\sqrt{-C_{5}\log(\alpha/(2p))}\right)\rightarrow0\qquad\forall\alpha\in(0,1).\label{eq: power sparse eq 6}
\end{equation}

Let $K=4\sqrt{C_{5}\vee1}+8$. Notice that for large $p$, 
\[
\lim_{p\rightarrow\infty}\frac{(K-4)\sqrt{\log p}}{2\sqrt{-C_{5}\log(\alpha/(2p))}}\geq2>1\qquad\forall\alpha\in(0,1).
\]

Thus, it follows, by (\ref{eq: power sparse eq 5}) and (\ref{eq: power sparse eq 6}) as well as $\|\Sigma_{V}\gamma_{*}\|_{\infty}=|e_{j_{*}}^{\top}\Sigma_{V}\gamma_{*}|$,
that 
\[
\mathbb{P}\left(T_{n}>\Gamma^{-1}(1-\alpha,\hat{Q})\right)\geq\mathbb{P}\left(\|\Sigma_{V}\gamma_{*}\|_{\infty}>2K[\sqrt{C_{3}}\|\gamma_{*}\|_{2}+\sqrt{C_{4}}]\sqrt{n^{-1}\log p}\right)-o(1).
\]

Then the desired result holds with $K_{1}=2K\sqrt{C_{3}}$ and $K_{2}=2K\sqrt{C_{4}}$.
The proof is complete. 
\end{proof}

\subsection{Proof of Theorems \ref{thm: LR test} and \ref{thm: minimax}}
\begin{proof}[\textbf{Proof of Theorem \ref{thm: LR test}}]
Let 
$$\lambda_{0}=(\Sigma_{A},\Sigma_{B},\sigma_{u,A}^{2},\sigma_{u,B}^{2},\beta_{A},\beta_{A})$$
and 
$$\lambda_{1}=(\Sigma_{A},\Sigma_{B},\sigma_{u,A}^{2},\sigma_{u,B}^{2},\beta_{A},\beta_{A}+\gamma_{*}).$$
Let $w_{i}=(w_{A,i}^{\top},w_{B,i}^{\top})^{\top}\in\mathbb{R}^{2(p+1)}$
with $w_{A,i}=(y_{A,i},x_{A,i}^{\top})^{\top}\in\mathbb{R}^{p+1}$
and $w_{B,i}=(y_{B,i},x_{B,i}^{\top})^{\top}\in\mathbb{R}^{p+1}$.
The proof proceeds in two steps. First, we characterize the test statistic
and the critical value; second, we derive the behavior of the test
statistic under $\mathbb{P}_{\lambda_{1}}$. 

\vskip 10pt 
\textbf{Step 1: characterize the test statistic and the critical value.}
\vskip 10pt

Notice that the log likelihood under $\mathbb{P}_{\lambda_{0}}$ is
\[
2n\log(2\pi)^{-(p+1)/2}+\frac{n}{2}\log\det(\Omega_{A})-\frac{1}{2}\sum_{i=1}^{n}w_{A,i}^{\top}\Omega_{A}w_{A,i}+\frac{n}{2}\log\det(\Omega_{B,0})-\frac{1}{2}\sum_{i=1}^{n}w_{B,i}^{\top}\Omega_{B,0}w_{B,i},
\]
where 
\[
\Omega_{A}=\begin{bmatrix}1 & 0\\
-\beta_{A} & I_{p}
\end{bmatrix}\begin{bmatrix}\sigma_{u,A}^{-2} & 0\\
0 & \Sigma_{A}^{-1}
\end{bmatrix}\begin{bmatrix}1 & -\beta_{A}^{\top}\\
0 & I_{p}
\end{bmatrix}\quad{\rm and}\quad\Omega_{B,0}=\begin{bmatrix}1 & 0\\
-\beta_{A} & I_{p}
\end{bmatrix}\begin{bmatrix}\sigma_{u,B}^{-2} & 0\\
0 & \Sigma_{B}^{-1}
\end{bmatrix}\begin{bmatrix}1 & -\beta_{A}^{\top}\\
0 & I_{p}
\end{bmatrix}.
\]

The log likelihood under $\mathbb{P}_{\lambda_{1}}$ is 
\[
2n\log(2\pi)^{-(p+1)/2}+\frac{n}{2}\log\det(\Omega_{A})-\frac{1}{2}\sum_{i=1}^{n}w_{A,i}^{\top}\Omega_{A}w_{A,i}+\frac{n}{2}\log\det(\Omega_{B,1})-\frac{1}{2}\sum_{i=1}^{n}w_{B,i}^{\top}\Omega_{B,1}w_{B,i},
\]
where 
\[
\Omega_{B,1}=\begin{bmatrix}1 & 0\\
-\beta_{A}-\gamma_{*} & I_{p}
\end{bmatrix}\begin{bmatrix}\sigma_{u,B}^{-2} & 0\\
0 & \Sigma_{B}^{-1}
\end{bmatrix}\begin{bmatrix}1 & -\beta_{A}^{\top}-\gamma_{*}^{\top}\\
0 & I_{p}
\end{bmatrix}.
\]

Notice that $\det\Omega_{B,0}=\det\Omega_{B,1}$ and thus the likelihood
ratio test can be written with the test statistic being 
\begin{equation}
LR_{n}=\frac{1}{2}\sum_{i=1}^{n}w_{B,i}^{\top}(\Omega_{B,0}-\Omega_{B,1})w_{B,i}=\sum_{i=1}^{n}s_{i},\label{eq: LR test eq 1}
\end{equation}
where 
$$s_{i}=\sigma_{u,B}^{-2}y_{B,i}x_{B,i}^{\top}\gamma_{*}-\sigma_{u,B}^{-2}(x_{B,i}^{\top}\gamma_{*})(x_{B,i}^{\top}\beta_{A})-\sigma_{u,B}^{-2}(x_{B,i}^{\top}\gamma_{*})^{2}/2.$$
Let $c_{n}(\alpha)$ be the critical value for a test of nominal size
$\alpha$, i.e., $\mathbb{P}_{\lambda_{0}}(LR_{n}>c_{n}(\alpha))=\alpha$.

Notice that, under $\mathbb{P}_{\lambda_{0}}$, 
$$s_{i}=\sigma_{u,B}^{-2}x_{B,i}^{\top}\gamma_{*}u_{B,i}-\sigma_{u,B}^{-2}(x_{B,i}^{\top}\gamma_{*})^{2}/2.$$
By the Gaussian assumption, we have 
\[
\begin{cases}
\mathbb{E}_{\lambda_{0}}\left(s_{i}\|\gamma_{*}\|_{2}^{-1}\right)=-\sigma_{u,B}^{-2}\gamma_{*}^{\top}\Sigma_{B}\gamma_{*}\|\gamma_{*}\|_{2}^{-1}/2\\
Var_{\lambda_{0}}\left(s_{i}\|\gamma_{*}\|_{2}^{-1}\right)=\gamma_{*}^{\top}\Sigma_{B}\gamma_{*}\|\gamma_{*}\|_{2}^{-2}\sigma_{u,B}^{-2}+\sigma_{u,B}^{-4}(\gamma_{*}^{\top}\Sigma_{B}\gamma_{*})^{2}\|\gamma_{*}\|_{2}^{-2}/2\\
\mathbb{E}_{\lambda_{0}}|s_{i}\|\gamma_{*}\|_{2}^{-1}|^{3}=O(1).
\end{cases}
\]

By the Lyapunov's central limit theorem applied to $\{s_{i}\|\gamma_{*}\|_{2}^{-1}\}_{i=1}^{n}$,
we have that
\begin{equation}
\frac{n^{-1/2}\sum_{i=1}^{n}[s_{i}\|\gamma_{*}\|_{2}^{-1}-\mathbb{E}(s_{i}\|\gamma_{*}\|_{2}^{-1})]}{\sqrt{Var_{\lambda_{0}}(s_{i}\|\gamma_{*}\|_{2}^{-1})}}\overset{d}{\rightarrow}\mathcal{N}(0,1).\label{eq: LR test eq 1.5}
\end{equation}

By (\ref{eq: LR test eq 1}), we have 
\begin{align*}
1-\alpha & =\mathbb{P}_{\lambda_{0}}\left(LR_{n}\leq c_{n}(\alpha)\right)\\
 & =\mathbb{P}_{\lambda_{0}}\left(\frac{n^{-1/2}\sum_{i=1}^{n}[s_{i}\|\gamma_{*}\|_{2}^{-1}-\mathbb{E}(s_{i}\|\gamma_{*}\|_{2}^{-1})]}{\sqrt{Var_{\lambda_{0}}(s_{i}\|\gamma_{*}\|_{2}^{-1})}}\leq\frac{n^{-1/2}c_{n}(\alpha)+\sqrt{n}\gamma_{*}^{\top}\Sigma_{B}\gamma_{*}\sigma_{u,B}^{-2}/2}{\sqrt{(\gamma_{*}^{\top}\Sigma_{B}\gamma_{*})^{2}\sigma_{u,B}^{-4}/2+\gamma_{*}^{\top}\Sigma_{B}\gamma_{*}\sigma_{u,B}^{-2}}}\right)\\
 & \overset{(i)}{=}\Phi\left(\frac{n^{-1/2}c_{n}(\alpha)+\sqrt{n}\gamma_{*}^{\top}\Sigma_{B}\gamma_{*}\sigma_{u,B}^{-2}/2}{\sqrt{(\gamma_{*}^{\top}\Sigma_{B}\gamma_{*})^{2}\sigma_{u,B}^{-4}/2+\gamma_{*}^{\top}\Sigma_{B}\gamma_{*}\sigma_{u,B}^{-2}}}\right)+o(1),
\end{align*}
where $(i)$ follows by (\ref{eq: LR test eq 1.5}) and Polya's theorem
(Theorem 9.1.4 of \cite{athreya2006measure}). Therefore, 
\begin{equation}
\frac{n^{-1/2}c_{n}(\alpha)+\sqrt{n}\gamma_{*}^{\top}\Sigma_{B}\gamma_{*}\sigma_{u,B}^{-2}/2}{\sqrt{(\gamma_{*}^{\top}\Sigma_{B}\gamma_{*})^{2}\sigma_{u,B}^{-4}/2+\gamma_{*}^{\top}\Sigma_{B}\gamma_{*}\sigma_{u,B}^{-2}}}\rightarrow\Phi^{-1}(1-\alpha).\label{eq: LR test eq 2}
\end{equation}

\vskip 10pt 
\textbf{Step 2: behavior of the test statistic under $\mathbb{P}_{\lambda_{1}}$. }
\vskip 10pt

Notice that, under $\mathbb{P}_{\lambda_{1}}$, 
$$s_{i}=\sigma_{u,B}^{-2}x_{B,i}^{\top}\gamma_{*}u_{B,i}+\sigma_{u,B}^{-2}(x_{B,i}^{\top}\gamma_{*})^{2}/2.$$
Similarly as before, we have that 
\[
\begin{cases}
\mathbb{E}_{\lambda_{1}}\left(s_{i}\|\gamma_{*}\|_{2}^{-1}\right)=\gamma_{*}^{\top}\Sigma_{B}\gamma_{*}\|\gamma_{*}\|_{2}^{-1}\sigma_{u,B}^{-2}/2\\
Var_{\lambda_{1}}\left(s_{i}\|\gamma_{*}\|_{2}^{-1}\right)^{2}=\gamma_{*}^{\top}\Sigma_{B}\gamma_{*}\|\gamma_{*}\|_{2}^{-2}\sigma_{u,B}^{-2}+(\gamma_{*}^{\top}\Sigma_{B}\gamma_{*})^{2}\|\gamma_{*}\|_{2}^{-2}\sigma_{u,B}^{-4}/2\\
\mathbb{E}_{\lambda_{1}}|s_{i}\|\gamma_{*}\|_{2}^{-1}|^{3}=O(1).
\end{cases}
\]

By (\ref{eq: LR test eq 1}), we have 
\begin{align*}
&\mathbb{P}_{\lambda_{1}}\left(LR_{n}>c_{n}(\alpha)\right) 
\\& =1-\mathbb{P}_{\lambda_{1}}\left(\frac{n^{-1/2}\sum_{i=1}^{n}[s_{i}\|\gamma_{*}\|_{2}^{-1}-\mathbb{E}(s_{i}\|\gamma_{*}\|_{2}^{-1})]}{\sqrt{Var_{\lambda_{0}}(s_{i}\|\gamma_{*}\|_{2}^{-1})}}\leq\frac{n^{-1/2}c_{n}(\alpha)-\sqrt{n}\gamma_{*}^{\top}\Sigma_{B}\gamma_{*}\sigma_{u,B}^{-2}/2}{\sqrt{(\gamma_{*}^{\top}\Sigma_{B}\gamma_{*})^{2}\sigma_{u,B}^{-4}/2+\gamma_{*}^{\top}\Sigma_{B}\gamma_{*}\sigma_{u,B}^{-2}}}\right)\\
 & \overset{(i)}{=}1-\Phi\left(\frac{n^{-1/2}c_{n}(\alpha)-\sqrt{n}\gamma_{*}^{\top}\Sigma_{B}\gamma_{*}\sigma_{u,B}^{-2}/2}{\sqrt{(\gamma_{*}^{\top}\Sigma_{B}\gamma_{*})^{2}\sigma_{u,B}^{-4}/2+\gamma_{*}^{\top}\Sigma_{B}\gamma_{*}\sigma_{u,B}^{-2}}}\right)+o(1)\\
 & \overset{(ii)}{=}1-\Phi\left(\Phi^{-1}(1-\alpha)-d_{n}\right)+o(1)\quad{\rm for}\quad d_{n}=\frac{\sqrt{n}\gamma_{*}^{\top}\Sigma_{B}\gamma_{*}}{\sqrt{(\gamma_{*}^{\top}\Sigma_{B}\gamma_{*})^{2}/2+\gamma_{*}^{\top}\Sigma_{B}\gamma_{*}\sigma_{u,B}^{2}}},
\end{align*}
where $(i)$ follows Lyapunov's central limit theorem and Polya's
theorem and $(ii)$ follows by (\ref{eq: LR test eq 2}). The desired
result follows by the elementary identity of $1-\Phi(z)=\Phi(-z)$
$\forall z\in\mathbb{R}$. 
\end{proof}

\begin{proof}[\textbf{Proof of Theorem \ref{thm: minimax}}]
Let $\phi_{n}=\phi_{n}(Y_{A},X_{A},Y_{B},X_{B})$ be a test such
that $\limsup_{n\rightarrow\infty}\sup_{\lambda\in\Lambda_{0}}\mathbb{E}_{\lambda}\phi_{n}\leq\alpha$.
Define 
$$\lambda_{0}=(\Sigma_{A},\Sigma_{B},\sigma_{u,A}^{2},\sigma_{u,B}^{2},\beta_{A},\beta_{A}+\gamma_{*})$$
with $\Sigma_{A}=\Sigma_{B}=I_{p}(M_{1}+M_{2})/2$, $\sigma_{u,A}=\sigma_{u,B}=(M_{1}+M_{2})/2$
and $\gamma_{*}=0$. For $1\leq j\leq p$, let 
$$\gamma_{j}=c_{j}e_{j}\sqrt{n^{-1}\log p},$$
where $e_{j}$ is the $j$th column of $I_{p}$, $c_{j}=C/\sqrt{e_{j}^{\top}\Sigma_{B}e_{j}}$
and $C=\sigma_{u,B}/2$. Define 
$$\lambda_{j}=(\Sigma_{A},\Sigma_{B},\sigma_{u,A}^{2},\sigma_{u,B}^{2},\beta_{A},\beta_{A}+\gamma_{j}).$$
Lemma \ref{lem:1} implies that  $\Sigma_{V}=2I_{p}$. Then  $\lambda_{0}\in\Lambda_{0}$ and $\lambda_{j}\in\Lambda(\tau)$
with $\tau=(M_{1}+M_{2})/8$. Notice that 
\begin{align}
\liminf_{n\rightarrow\infty}\inf_{\lambda\in\Lambda(\tau)}\mathbb{E}_{\lambda}\phi_{n}-\alpha & \leq\liminf_{n\rightarrow\infty}\left[p^{-1}\sum_{j=1}^{p}(\mathbb{E}_{\lambda_{j}}\phi_{n}-\alpha)\right]\label{eq: minimax eq 0}\\
 & \overset{(i)}{\leq}\liminf_{n\rightarrow\infty}\left[p^{-1}\sum_{j=1}^{p}(\mathbb{E}_{\lambda_{j}}\phi_{n}-\mathbb{E}_{\lambda_{0}}\phi_{n})\right]\nonumber \\
 & \overset{(ii)}{=}\liminf_{n\rightarrow\infty}\mathbb{E}_{\lambda_{0}}\left[\phi_{n}p^{-1}\sum_{j=1}^{p}\left(\frac{d\mathbb{P}_{\lambda_{j}}}{d\mathbb{P}_{\lambda_{0}}}-1\right)\right]\nonumber \\
 & \overset{(iii)}{\leq}\liminf_{n\rightarrow\infty}\mathbb{E}_{\lambda_{0}}\left|p^{-1}\sum_{j=1}^{p}\left(\frac{d\mathbb{P}_{\lambda_{j}}}{d\mathbb{P}_{\lambda_{0}}}-1\right)\right|\nonumber\\
 &\overset{(iv)}{\leq}\liminf_{n\rightarrow\infty}\sqrt{\mathbb{E}_{\lambda_{0}}\left[p^{-1}\sum_{j=1}^{p}\left(\frac{d\mathbb{P}_{\lambda_{j}}}{d\mathbb{P}_{\lambda_{0}}}-1\right)\right]^{2}},\nonumber 
\end{align}
where $(i)$ holds by $\limsup_{n\rightarrow\infty}\mathbb{E}_{\lambda_{0}}\phi_{n}\leq\alpha$,
$(ii)$ holds by $\mathbb{E}_{\lambda_{j}}\phi_{n}=\mathbb{E}_{\lambda_{0}}\phi_{n}d\mathbb{P}_{\lambda_{j}}/d\mathbb{P}_{\lambda_{0}}$,
$(iii)$ holds by $|\phi_{n}|\leq1$ and $(iv)$ follows by Lyapunov's
inequality. 

By Step 1 of the proof of Theorem \ref{thm: LR test}, we have that
\begin{equation}
d\mathbb{P}_{\lambda_{j}}/d\mathbb{P}_{\lambda_{0}}=\exp(T_{j})\quad{\rm with}\quad T_{j}=\sum_{i=1}^{n}s_{i,j}\quad{\rm and}\quad s_{i,j}=\sigma_{u,B}^{-2}x_{B,i}^{\top}\gamma_{j}u_{B,i}-\frac{1}{2}\sigma_{u,B}^{-2}(x_{B,i}^{\top}\gamma_{j})^{2}.\label{eq: minimax eq 0.5}
\end{equation}

By the moment generating function (MGF) of Gaussian distributions,
$\mathbb{E}(\exp(s_{i,j})\mid x_{B,i})=1$ and thus 
\begin{equation}
\mathbb{E}\exp(T_{j})=\left[\mathbb{E}\exp(s_{i,j})\right]^{n}=1.\label{eq: minimax eq 0.7}
\end{equation}

Similarly, we also have $\mathbb{E}(\exp(2s_{i,j})\mid x_{B,i})=\exp(\sigma_{u,B}^{-2}(x_{B,i}^{\top}\gamma_{j})^{2})$.
Since $$(x_{B,i}^{\top}\gamma_{j})^{2}/(\gamma_{j}^{\top}\Sigma_{B}\gamma_{j})\sim\chi^{2}(1),$$
it follows, by $\sigma_{u,B}^{-2}\gamma_{j}^{\top}\Sigma_{B}\gamma_{j}<1/2$
and the MGF of chi-squared distributions, that 
\begin{align}
\mathbb{E}\exp(2T_{j})=\mathbb{E}\exp\left(\sum_{i=1}^{n}\sigma_{u,B}^{-2}(x_{B,i}^{\top}\gamma_{j})^{2}\right)&=\left\{ \mathbb{E}\exp\left[\left(\sigma_{u,B}^{-2}\gamma_{j}^{\top}\Sigma_{B}\gamma_{j}\right)\left((x_{B,1}^{\top}\gamma_{j})^{2}/(\gamma_{j}^{\top}\Sigma_{B}\gamma_{j})\right)\right]\right\} ^{n} \nonumber \\
&=(1-2\sigma_{u,B}^{-2}\gamma_{j}^{\top}\Sigma_{B}\gamma_{j})^{-n/2} \nonumber
\\
&=\left(1-\frac{1}{2}n^{-1}\log p\right)^{-n/2}.\label{eq: minimax eq 1}
\end{align}

\newpage

Next, observe
\begin{align}
& \mathbb{E}\left(p^{-1}\sum_{j=1}^{p}\exp(T_{j})\right)^{2}
\nonumber \\
 & =p^{-2}\sum_{j=1}^{p}\mathbb{E}\exp(2T_{j})+p^{-2}\sum_{j_{1}\neq j_{2}}\mathbb{E}\exp(T_{j_{1}}+T_{j_{2}}) \nonumber \\
 & =p^{-2}\sum_{j=1}^{p}\left(1-\frac{1}{2}n^{-1}\log p\right)^{-n/2}+p^{-2}\sum_{j_{1}\neq j_{2}}\mathbb{E}\left\{ \mathbb{E}[\exp(T_{j_{1}}+T_{j_{2}})\mid\{x_{B,i}\}_{i=1}^{n}]\right\} \nonumber \\
 & \overset{(i)}{=}p^{-1}\left(1-\frac{1}{2}n^{-1}\log p\right)^{-n/2}+p^{-2}\sum_{j_{1}\neq j_{2}}\left[\mathbb{E}\exp\left(\sigma_{u,B}^{-2}(x_{B,1}^{\top}\gamma_{j_{1}})(x_{B,1}^{\top}\gamma_{j_{2}})\right)\right]^{n},\label{eq: minimax eq 2} 
\end{align}
where $(i)$ follows by the Gaussian MGF and the fact that 
$$T_{j_{1}}+T_{j_{2}}\mid\{x_{B,i}\}_{i=1}^{n} \sim \mathcal{N} (\mu_t, \sigma^{2}_t)$$
is Gaussian with mean 
$\mu_t=-\sigma_{u,B}^{-2}\sum_{i=1}^{n}(x_{B,i}^{\top}\gamma_{j_{1}})^{2}/2-\sigma_{u,B}^{-2}\sum_{i=1}^{n}(x_{B,i}^{\top}\gamma_{j_{2}})^{2}/2$
and variance 
$$\sigma^{2}_t=\sigma_{u,B}^{-2}\sum_{i=1}^{n}(x_{B,i}^{\top}\gamma_{j_{1}}+x_{B,i}^{\top}\gamma_{j_{2}})^{2}.$$ 

Notice that, for $j_{1}\neq j_{2}$, $x_{B,1}^{\top}\gamma_{j_{1}}$
and $x_{B,1}^{\top}\gamma_{j_{2}}$ are independent Gaussian random
variables since $\Sigma_{B}$ is diagonal. Hence, for $j_{1}\neq j_{2}$,
\begin{eqnarray}
&&\mathbb{E}\exp\left(\sigma_{u,B}^{-2}(x_{B,1}^{\top}\gamma_{j_{1}})(x_{B,1}^{\top}\gamma_{j_{2}})\right) \\
 & = & \mathbb{E}\left\{ \mathbb{E}\left[\exp\left(\sigma_{u,B}^{-2}(x_{B,1}^{\top}\gamma_{j_{1}})(x_{B,1}^{\top}\gamma_{j_{2}})\right)\mid(x_{B,1}^{\top}\gamma_{j_{2}})\right]\right\} \nonumber \\
 & \overset{(i)}{=} & \mathbb{E}\left\{ \exp\left(\frac{1}{2}\sigma_{u,B}^{-4}(x_{B,1}^{\top}\gamma_{j_{2}})^{2}C^{2}n^{-1}\log p\right)\right\} \nonumber \\
 & \overset{(ii)}{=} & \left(1-\sigma_{u,B}^{-4}C^{4}n^{-2}\log^{2}p\right)^{-1/2}\nonumber \\
 & = & \left(1-\frac{1}{8}n^{-2}\log^{2}p\right)^{-1/2},\label{eq: minimax eq 3}
\end{eqnarray}
where $(i)$ follows by Gaussian MGF and the definition of $\gamma_{j}$s
and $(ii)$ follows by 
$$(x_{B,1}^{\top}\gamma_{j_{2}})^{2}/(\gamma_{j_{2}}^{\top}\Sigma_{B}\gamma_{j_{2}})\sim\chi^{2}(1),$$
chi-squared MGF and the definition of $\gamma_{j}$s. We combine (\ref{eq: minimax eq 2})
and (\ref{eq: minimax eq 3}), obtaining 
\begin{equation}
\mathbb{E}\left(p^{-1}\sum_{j=1}^{p}\exp(T_{j})\right)^{2}=\underset{J_{1}}{\underbrace{p^{-1}\left(1-\frac{1}{2}n^{-1}\log p\right)^{-n/2}}}+\underset{J_{2}}{\underbrace{\frac{p-1}{p}\left(1-\frac{1}{8}n^{-2}\log^{2}p\right)^{-n/2}}}.\label{eq: minimax eq 4}
\end{equation}

Since $n/\log p\rightarrow\infty$, we have that $[1+(-1/2)\cdot n^{-1}\log p]^{n/\log p}\rightarrow\exp(-1/2)$.
Therefore, 
\begin{align*}
\log J_{1}&=-\log p-\frac{\log p}{2}\log\left(1-\frac{1}{2}n^{-1}\log p\right)^{n/\log p}
\\
&=\left[-1-\frac{1}{2}\left(-\frac{1}{2}+o(1)\right)\right]\log p\rightarrow-\infty.
\end{align*}

Recall the fact that if $a_{n}\rightarrow0$, then $(1+n^{-1}a_{n})^{n}\rightarrow\exp(0)=1$.
Since $n^{-1}\log^{2}p\rightarrow0$, we have 
\[
J_{2}=\frac{p-1}{p}\cdot\frac{1}{\sqrt{\left[1+n^{-1}\cdot\left(-\frac{1}{8}n^{-1}\log^{2}p\right)\right]^{n}}}\rightarrow1\cdot\frac{1}{\sqrt{1}}=1.
\]

Thus, (\ref{eq: minimax eq 4}) and the above two displays imply that
$$\mathbb{E}\left(p^{-1}\sum_{j=1}^{p}\exp(T_{j})\right)^{2}=1+o(1).$$
By (\ref{eq: minimax eq 0.7}), we have that 
$\mathbb{E}\left(p^{-1}\sum_{j=1}^{p}\exp(T_{j})-1\right)^{2}=o(1)$.
By (\ref{eq: minimax eq 0}) and (\ref{eq: minimax eq 0.5}), it follows
that 
\[
\liminf_{n\rightarrow\infty}\inf_{\lambda\in\Lambda(\tau)}\mathbb{E}_{\lambda}\phi_{n}-\alpha\leq0.
\]

The proof is complete. 
\end{proof}

\subsection{Proof of Theorems \ref{thm: size nonGaussian} and \ref{thm: power sparse nonGauss}}
\begin{lem}
\label{lem: exp bound subGaussian}Let $X$ be a random variable.
Suppose that there exists a constant $c>0$ such that $\mathbb{P}(|X|>t)\leq\exp(1-ct^{2})$
$\forall t>0$. Then $\mathbb{E}\exp(|X|/D)<2$, where $D\geq\sqrt{7/\left[c\log(3/2)\right]}$.\end{lem}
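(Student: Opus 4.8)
The plan is to control this exponential moment directly through the assumed sub-Gaussian tail, using the layer-cake (integration-by-parts) identity for the increasing function $g(t)=e^{t/D}$. Since $g(0)=1$ and $g'(t)=e^{t/D}/D$, I would first write
\[
\mathbb{E}e^{|X|/D}=1+\frac1D\int_0^\infty e^{t/D}\,\mathbb{P}(|X|>t)\,dt\le 1+\frac eD\int_0^\infty e^{t/D-ct^2}\,dt,
\]
where the inequality inserts the hypothesis $\mathbb{P}(|X|>t)\le e^{1-ct^2}$. This step is valid even though the tail estimate exceeds $1$ for small $t$, since we only upper bound a nonnegative integrand by a larger one. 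Everything is thereby reduced to a single Gaussian-type integral.

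Next I would complete the square, $t/D-ct^2=-c\bigl(t-\tfrac1{2cD}\bigr)^2+\tfrac1{4cD^2}$, giving
\[
\int_0^\infty e^{t/D-ct^2}\,dt=e^{1/(4cD^2)}\int_{-1/(2cD)}^\infty e^{-cs^2}\,ds.
\]
The crux is to estimate this truncated Gaussian \emph{tightly}: I would split the range as $[-1/(2cD),0]\cup[0,\infty)$, evaluate $\int_0^\infty e^{-cs^2}\,ds=\tfrac12\sqrt{\pi/c}$ exactly, and bound the short piece by $\int_{-1/(2cD)}^0 e^{-cs^2}\,ds\le\tfrac1{2cD}$ (the integrand is at most $1$). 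Collecting terms and writing $a:=cD^2$ yields
\[
\frac eD\int_0^\infty e^{t/D-ct^2}\,dt\le e\,e^{1/(4a)}\Bigl(\tfrac12\sqrt{\pi/a}+\tfrac1{2a}\Bigr).
\]

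Finally, the right-hand correction is manifestly decreasing in $a$, while the hypothesis $D\ge\sqrt{7/[c\log(3/2)]}$ is precisely $a\ge 7/\log(3/2)>17$. Evaluating the (decreasing) bound at the threshold $a=7/\log(3/2)$ gives a numerical value below $1$, so that $\mathbb{E}e^{|X|/D}<2$ for every admissible $D$. I expect the one genuine obstacle to be exactly this tightness: bounding the truncated Gaussian by the \emph{full} Gaussian $\sqrt{\pi/c}$ is too crude and produces a constant exceeding $2$, so one must exploit that the shift $1/(2cD)$ is small and peel off the $[-1/(2cD),0]$ contribution separately. The remaining work—verifying the closed-form numeric inequality and the monotonicity in $a$ that reduces the statement for all $D$ to its boundary value—is routine.
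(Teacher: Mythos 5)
Your proof is correct, but it takes a genuinely different route from the paper's. You use the continuous tail-integral identity $\mathbb{E}e^{|X|/D}=1+\tfrac1D\int_0^\infty e^{t/D}\mathbb{P}(|X|>t)\,dt$, reduce everything to a single Gaussian-type integral by completing the square, and then estimate the truncated Gaussian carefully by peeling off the short interval $[-1/(2cD),0]$; the numerics check out (with $a=cD^2\ge 7/\log(3/2)\approx 17.26$ the correction term is about $0.67<1$, and it is indeed decreasing in $a$). The paper instead slices $Z=e^{|X|/D}$ into the discrete layers $\{i-1/2<Z\le i+1/2\}$, bounds $\mathbb{E}Z\le 3/2+e\sum_{i\ge2}(i+1/2)\exp[-cD^2\log^2(i-1/2)]$, and shows the summands are dominated by $i^{-4}$ precisely because $cD^2\log(3/2)\ge 7$, yielding $\mathbb{E}Z<3/2+e/10<2$. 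Your approach buys an explicit closed-form bound and a slightly sharper constant, at the cost of the completing-the-square computation and the observation (which you correctly flag) that bounding the shifted integral by the full Gaussian $\sqrt{\pi/c}$ is too crude; the paper's approach avoids all integration and makes the role of the threshold $7/\log(3/2)$ transparent as the point where the series becomes summable with a small sum. Both are complete proofs of the stated bound.
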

\begin{proof}[Proof of Lemma \ref{lem: exp bound subGaussian}]
Let $Z=\exp(|X|/D)$. Since $Z\geq1$, we have the decomposition
$$Z=\sum_{i=1}^{\infty}Z\mathbf{1}\{i-1/2<Z\leq i+1/2\}.$$ Define the
sequence of constants $$b_{i}=(i-1/2)^{2}\exp[-cD^{2}\log^{2}(i-1/2)].$$ By Fubini's
theorem, 
\begin{align}
\mathbb{E}Z &=\sum_{i=1}^{\infty}\mathbb{E}Z\mathbf{1}\{i-1/2<Z\leq i+1/2\}   \leq\sum_{i=1}^{\infty}(i+1/2)\mathbb{P}(Z>i-1/2)\nonumber \\
 & =3/2+\sum_{i=2}^{\infty}(i+1/2)\mathbb{P}(Z>i-1/2)\nonumber \\
 & =3/2+\sum_{i=2}^{\infty}(i+1/2)\mathbb{P}\left[|X|>D\log(i-1/2)\right]\nonumber \\
 & \overset{(i)}{\leq}3/2+e\sum_{i=2}^{\infty}(i+1/2)\exp[-cD^{2}\log^{2}(i-1/2)],\nonumber \\
 & \overset{(ii)}{\leq}3/2+e\sum_{i=2}^{\infty}b_{i},\label{eq: subGaussian bnd eq 2}
\end{align}
where $(i)$ follows by $\mathbb{P}(|X|>t)\leq\exp(1-ct^{2})$ $\forall t>0$
and $(ii)$ follows by the elementary inequality that 
$$(i+1/2)\leq(i-1/2)^{2}$$
for $i\geq2$. Notice that, for $i\geq2$, 
\begin{align*}
\log b_{i}-\log(i^{-4})\leq\log b_{i}-\log(i-1/2)^{-4}&=  \left[6-cD^{2}\log(i-1/2)\right]\log(i-1/2)\\
 & \overset{(i)}{\leq}\left[6-cD^{2}\log(3/2)\right]\log(i-1/2)\\
 & \overset{(ii)}{\leq}-\log(i-1/2)\leq0,
\end{align*}
where $(i)$ holds by $i\ge2$ and $(ii)$ holds by the definition
of $D$ in the statement of the lemma. The above display implies that,
$\forall i\geq2$, 
$$b_{i}\leq i^{-4}.$$ It follows, by (\ref{eq: subGaussian bnd eq 2}),
that 
\[
\mathbb{E}Z\leq3/2+e\sum_{i=2}^{\infty}i^{-4}.
\]

It can be shown that $\sum_{i=2}^{\infty}i^{-4}=\pi^{4}/90-1\leq1/10$.
Thus, $\mathbb{E}Z<3/2+e/10<2$. \end{proof}
\begin{lem}
\label{lem: bias nonGaussian}Consider Algorithm \ref{alg: main nonGaussian}.
Let Assumption \ref{assu: size non-Gaussian} hold. Then
\begin{itemize}
\item[(1)] $\max_{1\leq j\leq p}\|W(\widetilde{\Pi}_{j}-\Pi_{*,j})\|_{2}^{2}=O_{P}(s_{\Pi}\log p)$
and $\max_{1\leq j\leq p}\|\widetilde{\Pi}_{j}-\Pi_{*,j}\|_{1}=O_{P}(s_{\Pi}\sqrt{n^{-1}\log p})$.
\item[(2)] $\|n^{-1/2}(\Pi_{*}-\widetilde{\Pi})^{\top}W^{\top}(Y-W\widetilde{\theta})\hat{\sigma}_{u}^{-1}\|_{\infty}=O_{P}(s_{\Pi}n^{-1/2}\log p)$. 
\end{itemize}\end{lem}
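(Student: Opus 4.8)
The plan is to follow the proof of Lemma~\ref{lem: bias} almost verbatim, replacing every appeal to Gaussian tail behavior by the corresponding sub-Gaussian/sub-exponential concentration guaranteed by Assumption~\ref{assu: size non-Gaussian}. The key starting observation is that the estimator $\widetilde\Pi$ in \eqref{eq: Pi estimate} is identical in both algorithms: for each $j$ it is the generalized ADDS of \eqref{eq: DS optimization step 1}--\eqref{eq: auto-scaling DS step 2} applied to the linear model $Z_j = W\Pi_{*,j} + V_j$, with $V_j = Z_j - W\Pi_{*,j}$ and the identification $(H,G,b_*,\varepsilon) = (Z_j, W, \Pi_{*,j}, V_j)$ and $\mathcal{B}(\sigma) = \mathbb{R}^p$. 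Hence part~(1) will follow from Theorem~\ref{thm: ADDS} once I verify that Assumption~\ref{assu:asds} holds, uniformly over $1\le j\le p$, on an event of probability tending to one.

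For the verification I would reuse the event $\mathcal{A} = \bigcap_{j=1}^p \mathcal{A}_j$ from the Gaussian proof, where $\mathcal{A}_j = \{\,3\sigma_{v,j}^2/4 \le n^{-1}\|V_j\|_2^2 \le 2\sigma_{v,j}^2 \ \text{and}\ \|n^{-1}W^\top V_j\|_\infty \le \eta\sigma_{v,j}\,\}$ with $\sigma_{v,j}^2 = \mathbb{E}v_{1,j}^2$. The only step that genuinely changes is the justification that $\mathbb{P}(\mathcal{A}) \to 1$. Under Assumption~\ref{assu: size non-Gaussian}(ii) each $v_{i,j} = z_{i,j} - w_i^\top\Pi_{*,j}$ is a fixed linear combination of the sub-Gaussian entries of $x_{A,i}, x_{B,i}$ and is therefore sub-Gaussian with norm bounded by a constant (using that the eigenvalues of $\Sigma_A, \Sigma_B$ lie in $(\kappa_1,\kappa_2)$). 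Consequently $v_{i,j}^2$ is sub-exponential, and since $\mathbb{E}[w_{i,l}v_{i,j}] = 0$ holds by the very definition of $\Pi_*$, the products $w_{i,l}v_{i,j}$ are mean-zero and sub-exponential with uniformly bounded norm (Assumption~\ref{assu: size non-Gaussian}(iii) guarantees $v_{i,j}\perp w_i$, so these are products of independent sub-Gaussians). Bernstein's inequality (Proposition~5.16 of \cite{vershynin2010introduction}) together with a union bound over the at most $p^2$ indices then gives $\mathbb{P}(\mathcal{A})\to1$ with $\eta \asymp \sqrt{n^{-1}\log p}$, exactly as in Lemma~\ref{lem: bias}. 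The restricted eigenvalue condition \eqref{eq: RE condition} with $G=W$ continues to hold for some constant $\kappa>0$ with probability approaching one by Theorem~6 of \cite{rudelson2013reconstruction}, which already applies to sub-Gaussian rows. Conditions (iii)--(v) of Assumption~\ref{assu:asds} are then immediate ($\mathcal{B}=\mathbb{R}^p$, and $28\eta\sqrt{s_\Pi/\kappa}\le1$ by the sparsity condition on $\Pi_*$ forcing $\eta\sqrt{s_\Pi/\kappa}=o(1)$). Applying \eqref{ADDS part 2} and \eqref{ADDS part 3} coordinate-wise and maximizing over $j$ yields the two bounds in part~(1).

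For part~(2) I would argue exactly as in Lemma~\ref{lem: bias}. The $\theta$-estimate entering the algorithm (denoted $\widetilde\theta$ in the statement, and equal to $\check\theta=\check\theta(\breve\sigma_u)$ of \eqref{eq: theta estimate-2}) satisfies $\|n^{-1}W^\top(Y-W\check\theta)\|_\infty \le \eta\breve\sigma_u$ by the constraint in \eqref{eq: non-gaussian DS-2}, while the constraint in \eqref{eq: auto-scaling DS-2} gives $\breve\sigma_u \le \sqrt2\,\check\sigma_u$ with $\check\sigma_u = n^{-1/2}\|Y-W\check\theta\|_2$; hence $\|n^{-1}W^\top(Y-W\check\theta)\|_\infty\check\sigma_u^{-1} \le \sqrt2\,\eta$ (equivalently, this is \eqref{ADDS part 5} for the augmented ADDS, whose extra restriction $\|Y-W\theta\|_\infty\le\mu\sigma$ is an admissible nested set $\mathcal{B}(\sigma)$). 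Holder's inequality then gives
\[
\bigl\|n^{-1/2}(\Pi_{*}-\widetilde\Pi)^\top W^\top(Y-W\check\theta)\bigr\|_\infty\check\sigma_u^{-1} \le \sqrt n\,\max_{1\le j\le p}\|\widetilde\Pi_j-\Pi_{*,j}\|_1\,\|n^{-1}W^\top(Y-W\check\theta)\|_\infty\check\sigma_u^{-1},
\]
and substituting $\max_j\|\widetilde\Pi_j-\Pi_{*,j}\|_1 = O_P(s_\Pi\sqrt{n^{-1}\log p})$ from part~(1) together with the constraint bound yields $O_P(s_\Pi n^{-1/2}\log p)$, which is part~(2).

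The main obstacle is concentrated in the second paragraph: obtaining the uniform-in-$j$ maximal inequality $\max_j\|n^{-1}W^\top V_j\|_\infty\le\eta\sigma_{v,j}$ and the two-sided control of $n^{-1}\|V_j\|_2^2$ without Gaussianity. The clean Gaussian tails that made $\mathbb{P}(\mathcal{A})\to1$ transparent in Lemma~\ref{lem: bias} must be replaced by sub-exponential Bernstein bounds, which requires carefully bounding the sub-exponential norms of $v_{i,j}^2$ and of the products $w_{i,l}v_{i,j}$; this is precisely where Assumption~\ref{assu: size non-Gaussian}(ii)--(iii) enters. Once this event is secured, the remainder is a direct transcription of the Gaussian argument.
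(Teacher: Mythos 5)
Your proposal is correct and matches the paper's own argument: the paper likewise observes that part (1) carries over verbatim because the proof of Lemma \ref{lem: bias} already relied only on sub-exponential Bernstein bounds and the Rudelson--Zhou restricted-eigenvalue result for sub-Gaussian rows (not on exact Gaussian tails, so there is even less to change than your last paragraph suggests), and it proves part (2) exactly as you do, via the constraints in (\ref{eq: non-gaussian DS-2})--(\ref{eq: auto-scaling DS-2}) giving $\|n^{-1}W^{\top}(Y-W\check{\theta})\|_{\infty}\le\sqrt{2}\,\eta\,\check{\sigma}_{u}$ followed by H\"older's inequality. Your additional remark that the extra constraint $\|Y-W\theta\|_{\infty}\le\mu\sigma$ is an admissible nested $\mathcal{B}(\sigma)$ is a correct and useful detail the paper leaves implicit.
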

\begin{proof}[Proof of Lemma \ref{lem: bias nonGaussian}]
Part (1) follows the same argument as the proof of part(1) in Lemma
\ref{lem: bias} since changing the distribution of $Z$ and $W$
from Gaussian to sub-Gaussian does not affect the arguments. 

Part (2) also follows a similar argument. Notice that 
\begin{align*}
\|n^{-1}W^{\top}(Y-W\widetilde{\theta})\|_{\infty} &=\|n^{-1}W^{\top}(Y-W\hat{\theta}(\tilde{\sigma}_{u}))\|_{\infty}
\\
&\overset{(i)}{\leq}\eta\tilde{\sigma}_{u}
\\
&\overset{(ii)}{\leq}\eta\sqrt{2}n^{-1/2}\|Y-W\hat{\theta}(\tilde{\sigma}_{u})\|_{2}=\sqrt{2}\eta\hat{\sigma}_{u},
\end{align*}
where $(i)$ and $(ii)$ follow by the constraints (\ref{eq: non-gaussian DS-2})
and (\ref{eq: auto-scaling DS-2}), respectively. Therefore,
\begin{align*}
& \left\Vert n^{-1/2}(\Pi_{*}-\widetilde{\Pi})^{\top}W^{\top}(Y-W\widetilde{\theta})\right\Vert _{\infty}\hat{\sigma}_{u}^{-1} \\
& =\max_{1\leq j\leq p}\left|n^{-1/2}(\Pi_{*,j}-\widetilde{\Pi}_{j})^{\top}W^{\top}(Y-W\widetilde{\theta})\right|\hat{\sigma}_{u}^{-1}\\
 & \overset{(i)}{\leq}\sqrt{n}\max_{1\leq j\leq p}\|\widetilde{\Pi}_{j}-\Pi_{*,j}\|_{1}\|n^{-1}W^{\top}(Y-W\widetilde{\theta})\|_{\infty}\hat{\sigma}_{u}^{-1}\\
 & \leq\sqrt{n}\max_{1\leq j\leq p}\|\widetilde{\Pi}_{j}-\Pi_{*,j}\|_{1}\sqrt{2}\eta=O_{P}(s_{\Pi}n^{-1/2}\log p),
\end{align*}
where $(i)$ follows by Holder's inequality. This proves part (2).
The proof is complete.
\end{proof}

\begin{proof}[\textbf{Proof of Theorem \ref{thm: size nonGaussian}}]
The argument is similar to the proof of Theorem \ref{thm: size result},
except that we need to invoke a high-dimensional central limit theorem
under non-Gaussian designs. We proceed in two steps. First, we show
the desired result assuming a ``central limit theorem'' (stated
below in (\ref{eq: size thm nonGauss eq 4})) and then we show the
``central limit theorem''. 

\vskip 10pt 
\textbf{Step 1: show the desired result assuming a ``central limit
theorem'' }
\vskip 10pt 

Consider the test statistic $T_{n}^+$ (\ref{eq: test stat}).
Assume that $H_{0}$ (\ref{eq: null hypo}) is true. Then 
\begin{align}
T_{n}^{+}&=\left\Vert n^{-1/2}\left(Z-W\widetilde{\Pi}\right)^{\top}\left(Y-W\widetilde{\theta}\right)\hat{\sigma}_{u}^{-1}\right\Vert _{\infty}
\\
 & =\left\Vert n^{-1/2}\left(V+W(\Pi-\widetilde{\Pi})\right)^{\top}(Y-W\widetilde{\theta})\hat{\sigma}_{u}^{-1}\right\Vert _{\infty}\nonumber \\
 & =\left\Vert n^{-1/2}V^{\top}(Y-W\widetilde{\theta})\hat{\sigma}_{u}^{-1}+\Delta_{n}\right\Vert _{\infty},\nonumber \\
 & =\left\Vert n^{-1/2}\sum_{i=1}^{n}\Psi_{i}+\Delta_{n}\right\Vert _{\infty},\label{eq: size thm nonGauss eq 1}
\end{align}
where 
$$\Delta_{n}=n^{-1/2}(\Pi_{*}-\widetilde{\Pi})^{\top}W^{\top}(Y-W\widetilde{\theta})\hat{\sigma}_{u}^{-1}$$
and $\Psi_{i}=v_{i}\hat{u}_{i}\hat{\sigma}_{u}^{-1}$ with $\hat{u}_{i}=y_{i}-w_{i}^{\top}\widetilde{\theta}$.
Let $\mathcal{F}_{n}$ be the $\sigma$-algebra generated by $W$
and $Y$. 

Since $\hat{u}_{i}$ and $\hat{\sigma}_{u}$ are computed using only
$Y$ and $W$, which, under $H_{0}$ (\ref{eq: null hypo}), are independent
of $V$, it follows that 
$$\mathbb{E}(v_{i}v_{i}^{\top}\hat{u}_{i}^{2}\hat{\sigma}_{u}^{-2}\mid\mathcal{F}_{n})=\Sigma_{V}\hat{u}_{i}^{2}\hat{\sigma}_{u}^{-2}.$$
By $\hat{\sigma}_{u}^{2}=n^{-1}\|Y-W\widetilde{\theta}\|_{2}^{2}=n^{-1}\sum_{i=1}^{n}\hat{u}_{i}^{2}$,
we have $Q:=n^{-1}\sum_{i=1}^{n}\mathbb{E}(\Psi_{i}\Psi_{i}^{\top}\mid\mathcal{F}_{n})=\Sigma_{V}$.
By Assumption \ref{assu: size non-Gaussian} and Lemma \ref{lem:1}, there exist constant
constants $b_{1},b_{2}>0$ such that 
\begin{equation}
b_{1}\leq\min_{1\leq j\leq p}Q_{j,j}\leq\max_{1\leq j\leq p}Q_{j,j}\leq b_{2}.\label{eq: thm size nonGauss eq 2}
\end{equation}

By Lemmas \ref{lem: bias nonGaussian} and \ref{lem: approx variance},
we have
\begin{equation}
\begin{cases}
\|\Delta_{n}\|_{\infty}=O_{P}(s_{\Pi}\log p)=o_{P}(1/\sqrt{\log p})\\
\|\hat{Q}-Q\|_{\infty}=O_{P}\left((s_{\Pi}n^{-1}\log p)\vee\sqrt{n^{-1}\log p}\right)=o_{P}(1/\sqrt{\log p}).
\end{cases}\label{eq: thm size nonGauss eq 3}
\end{equation}

We prove the result assuming the following claim, which is proved
afterwards:
\begin{equation}
\sup_{x\in\mathbb{R}}\left\Vert \mathbb{P}\left(\left\Vert n^{-1/2}\sum_{i=1}^{n}\Psi_{i}\right\Vert _{\infty}\leq x\mid\mathcal{F}_{n}\right)-\Gamma(x,\Sigma_{V})\right\Vert =o_{P}(1).\label{eq: size thm nonGauss eq 4}
\end{equation}

We apply Theorem \ref{thm: approx assuming CLT} to the decomposition
(\ref{eq: size thm nonGauss eq 1}). From (\ref{eq: thm size nonGauss eq 3}),
(\ref{eq: thm size nonGauss eq 2}) and (\ref{eq: size thm nonGauss eq 4}),
all the assumptions of Theorem \ref{thm: approx assuming CLT} are
satisfied. Therefore, the desired result follows by Theorem \ref{thm: approx assuming CLT}.

\vskip 10pt 
\textbf{Step 2: show the ``central limit theorem''}
\vskip 10pt

It remains to prove the claim in (\ref{eq: size thm nonGauss eq 4}).
To this end, we invoke Proposition \ref{prop: HD CLT CCK}. Hence,
we only need to verify the following conditions. 
\begin{itemize}
\item[(a)] There exists a constant $b>0$ such that $\mathbb{P}\left(\min_{1\leq j\leq p}\Sigma_{V,j,j}\geq b\right)\rightarrow1$.
\item[(b)] There exists a sequence of $\mathcal{F}_{n}$-measurable random variables
$B_{n}>0$ such that $B_{n}=o(\sqrt{n}/\log^{7/2}(pn))$, $\max_{1\leq j\leq p}n^{-1}\sum_{i=1}^{n}|\hat{u}_{i}|^{3}\hat{\sigma}_{u}^{-3}\mathbb{E}|v_{i,j}|^{3}\leq B_{n}$
and $\max_{1\leq j\leq p}n^{-1}\sum_{i=1}^{n}|\hat{u}_{i}|^{4}\hat{\sigma}_{u}^{-4}\mathbb{E}|v_{i,j}|^{4}\leq B_{n}^{2}$. 
\item[(c)] $\max_{1\leq i\leq n,\ 1\leq j\leq p}\mathbb{E}[\exp(|v_{i,j}\hat{u}_{i}\hat{\sigma}_{u}^{-1}|/B_{n})\mid\mathcal{F}_{n}]\leq2$. 
\end{itemize}
Notice that Condition (a) follows by (\ref{eq: thm size nonGauss eq 2})
and $\Sigma_{V}=Q$. To show the other two conditions, notice that,
by the constraints (\ref{eq: non-gaussian DS-2}) and (\ref{eq: auto-scaling DS-2}),
$\max_{1\leq i\leq n}|\hat{u}_{i}|\leq\mu\tilde{\sigma}_{u}$ and
$\tilde{\sigma}_{u}^{2}/2\leq\hat{\sigma}_{u}^{2}$. Therefore, 
\begin{equation}
\max_{1\leq i\leq n}|\hat{u}_{i}|\hat{\sigma}_{u}^{-1}\leq\sqrt{2}\mu.\label{eq: size thm nonGauss eq 5}
\end{equation}

Since $v_{i,j}$ has a bounded sub-Gaussian norm, there exists a constant
$C_{1}>0$ such that $\forall1\leq j\leq p$ and $\forall1\leq i\leq n$,
$\mathbb{E}|v_{i,j}|^{3}\leq C_{1}$ and $\mathbb{E}v_{i,j}^{4}\leq C_{1}^{2}$.
By the sub-Gaussian property and Lemma \ref{lem: exp bound subGaussian},
there exists a constant $C_{2}>0$ such that $\max_{1\leq i\leq p,\ 1\leq i\leq n}\mathbb{E}\exp(|v_{i,j}|/C_{2})\leq2$.
We define 
\begin{equation}
B_{n}=4(\mu^{3}\vee1)(C_{1}\vee C_{2}).\label{eq: size thm nonGauss eq 6}
\end{equation}

By (\ref{eq: size thm nonGauss eq 5}), we have that 
\[
\begin{cases}
\max_{1\leq j\leq p} &n^{-1}\sum_{i=1}^{n}|\hat{u}_{i}|^{3}\hat{\sigma}_{u}^{-3}\mathbb{E}|v_{i,j}|^{3}
\\
& \leq C_{1}\max_{1\leq j\leq p}n^{-1}\sum_{i=1}^{n}|\hat{u}_{i}|^{3}\hat{\sigma}_{u}^{-3}\leq2\sqrt{2}\mu^{3}C_{1}\leq B_{n}\\
\max_{1\leq j\leq p} & n^{-1}\sum_{i=1}^{n}|\hat{u}_{i}|^{4}\hat{\sigma}_{u}^{-4}\mathbb{E}|v_{i,j}|^{4}
\\
&\leq C_{1}^{2}\max_{1\leq j\leq p}n^{-1}\sum_{i=1}^{n}|\hat{u}_{i}|^{4}\hat{\sigma}_{u}^{-4}\leq4\mu^{4}C_{1}^{2}\leq B_{n}^{2}\\
&\mathbb{E}[\exp(|v_{i,j}\hat{u}_{i}\hat{\sigma}_{u}^{-1}|/B_{n})\mid\mathcal{F}_{n}]\\
&
\leq\mathbb{E}[\exp(|v_{i,j}|\sqrt{2}\mu/B_{n})\mid\mathcal{F}_{n}]\leq\mathbb{E}[\exp(|v_{i,j}|/C_{2})\mid\mathcal{F}_{n}]\leq2.
\end{cases}
\]

By the rate conditions in Assumption \ref{assu: size non-Gaussian},
it is not hard to see that $B_{n}$ in (\ref{eq: size thm nonGauss eq 6})
satisfies $B_{n}=o(\sqrt{n}/\log^{7/2}(pn))$. We have showed Conditions
(b) and (c). The proof is complete. \end{proof}
\begin{lem}
\label{lem: sigma_star sparse alter nonGauss}Consider Algorithm \ref{alg: main nonGaussian}.
Let $U(\gamma_{*})=V\gamma_{*}+U$. Suppose that Assumption \ref{assu: size non-Gaussian}
holds. Then with probability approaching one, 
\begin{itemize}
\item[] $3\sigma_{*}^{2}/4\leq n^{-1}\|U(\gamma_{*})\|_{2}^{2}\leq2\sigma_{*}^{2}$,
\item[] $\|n^{-1}W^{\top}U(\gamma_{*})\|_{\infty}\leq\eta\sigma_{*}$ and
\item[] $\|V\gamma_{*}+U\|_{\infty}\leq\mu\sigma_{*}$, where $\sigma_{*}=\sqrt{\gamma_{*}^{\top}\Sigma_{V}\gamma_{*}+\sigma_{u}^{2}}$. 
\end{itemize}
\end{lem}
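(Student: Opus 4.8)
The plan is to verify the three displayed inequalities separately, exploiting that under Assumption \ref{assu: size non-Gaussian}(iii) together with the independence of the model errors from the design, the rows $w_i$ of $W$ are independent of $U(\gamma_*)_i := v_i^\top\gamma_* + u_i$. By Lemma \ref{lem:1} the eigenvalues of $\Sigma_V$ are bounded, so $v_i^\top\gamma_*$ is a mean-zero sub-Gaussian variable with variance $\gamma_*^\top\Sigma_V\gamma_* \asymp \|\gamma_*\|_2^2$ and sub-Gaussian norm of the same order. Combined with $\mathbb{E}[u_i]=0$, $\mathbb{E}[u_i^2]=\sigma_u^2$ and $v_i\perp u_i$, each entry $U(\gamma_*)_i$ is mean-zero with variance exactly $\sigma_*^2=\gamma_*^\top\Sigma_V\gamma_*+\sigma_u^2$. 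I would record these moment facts first, as all three bounds are driven by them. (The ninth-moment control $\mathbb{E}|u_i|^9\leq\kappa_3$ from Assumption \ref{assu: power nonGaussian}, under which this lemma is invoked, will be needed for the last two bounds; in particular $\mathbb{E}u_i^4\leq\kappa_3^{4/9}$ by the power-mean inequality.)

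For the first bound I would expand $n^{-1}\|U(\gamma_*)\|_2^2 = n^{-1}\|V\gamma_*\|_2^2 + 2n^{-1}(V\gamma_*)^\top U + n^{-1}\|U\|_2^2$ and show each term concentrates at the right scale relative to $\sigma_*^2$. Writing $a_i=v_i^\top\gamma_*/\sqrt{\gamma_*^\top\Sigma_V\gamma_*}$, the variables $a_i^2$ are sub-exponential with unit mean, so Bernstein's inequality gives $n^{-1}\|V\gamma_*\|_2^2=\gamma_*^\top\Sigma_V\gamma_*(1+O_P(n^{-1/2}))$; Chebyshev's inequality with $\mathbb{E}u_i^4\leq\kappa_3^{4/9}$ gives $n^{-1}\|U\|_2^2=\sigma_u^2+O_P(n^{-1/2})$; and the cross term is mean-zero with per-term variance $\gamma_*^\top\Sigma_V\gamma_*\cdot\sigma_u^2\leq\sigma_*^4/4$ by AM--GM, so it is $O_P(n^{-1/2}\sigma_*^2)$. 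Summing, $n^{-1}\|U(\gamma_*)\|_2^2=\sigma_*^2(1+o_P(1))$, which lies in $[3\sigma_*^2/4,2\sigma_*^2]$ with probability tending to one.

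For the second bound I would condition on $U(\gamma_*)$ and use the independence of $W$: for each coordinate $l$, $n^{-1}\sum_{i=1}^n w_{i,l}U(\gamma_*)_i$ is a weighted sum of the sub-Gaussian variables $w_{i,l}$ with conditional variance proportional to $n^{-2}\|U(\gamma_*)\|_2^2\leq2\sigma_*^2/n$ on the event from the first bound. A sub-Gaussian tail bound and a union bound over the $p$ coordinates then yield $\|n^{-1}W^\top U(\gamma_*)\|_\infty\leq C\sigma_*\sqrt{n^{-1}\log p}\leq\eta\sigma_*$ with probability tending to one; this is exactly the step handled by Lemma \ref{lem: hoeffding plus union}, applied with $x_{i,l}=w_{i,l}$ and $h_i=U(\gamma_*)_i/\sigma_*$, so I would invoke it verbatim as in the proof of Lemma \ref{lem: sigma_star sparse alter}.

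The last bound is the genuinely new ingredient and I expect it to be the main obstacle, since it must accommodate heavy-tailed errors controlled only through a ninth moment. I would split $\|V\gamma_*+U\|_\infty\leq\max_i|v_i^\top\gamma_*|+\max_i|u_i|$. The sub-Gaussian maximum satisfies $\max_i|v_i^\top\gamma_*|=O_P(\sqrt{\gamma_*^\top\Sigma_V\gamma_*}\sqrt{\log n})=O_P(\sigma_*\sqrt{\log n})$, which is $o(\mu\sigma_*)$ because $\sqrt{\log n}=o(n^{1/9}\log^{1/3}p)$. For the errors, Markov's inequality applied to the ninth moment together with a union bound gives $\mathbb{P}(\max_i|u_i|>t)\leq n\kappa_3 t^{-9}$; choosing $t\asymp n^{1/9}\log^{1/9}p$ shows $\max_i|u_i|=O_P(n^{1/9}\log^{1/9}p)$, and since $\sigma_*\geq\sigma_u$ is bounded away from zero and $\log^{1/9}p\leq\log^{1/3}p$, this is at most $\mu\sigma_*$ with probability tending to one. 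The precise calibration $\mu\asymp n^{1/9}\log^{1/3}p$ is exactly what makes the residual constraint in \eqref{eq: non-gaussian DS-2} consistent with the ninth-moment tails, which is why this step, rather than the first two, is the delicate one.
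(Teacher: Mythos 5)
Your proposal is correct and follows essentially the same route as the paper: the first bound is a law-of-large-numbers statement for $n^{-1}\sum_i u_i^2(\gamma_*)\sigma_*^{-2}$ (your three-term expansion just makes the concentration explicit), the second bound is exactly the paper's application of Lemma \ref{lem: hoeffding plus union} with $h_i=u_i(\gamma_*)\sigma_*^{-1}$, and the third bound rests, as in the paper, on the ninth-moment condition of Assumption \ref{assu: power nonGaussian} (which you rightly note is the assumption actually in force when the lemma is invoked) combined with Markov's inequality and the calibration $\mu^9\asymp n\log^3 p$. The only cosmetic difference is in the last step, where you split $\max_i|v_i^\top\gamma_*+u_i|$ into a sub-Gaussian maximum plus $\max_i|u_i|$ and union-bound each piece, while the paper bounds $[\mathbb{E}|u_1(\gamma_*)|^9]^{1/9}$ via Minkowski's inequality and then uses independence across $i$; both yield the same $C_4 n^{-1}\log^{-3}p$ per-observation tail and hence the same conclusion.
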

\begin{proof}[Proof of Lemma \ref{lem: sigma_star sparse alter nonGauss}]
Let $u_{i}(\gamma_{*})=v_{i}^{\top}\gamma_{*}+u_{i}$. Then $\mathbb{E}u_{i}^{2}(\gamma_{*})\sigma_{*}^{-2}=1$.
By the law of large numbers, $n^{-1}\|U(\gamma_{*})\|_{2}^{2}\sigma_{*}^{-2}=n^{-1}\sum_{i=1}^{n}u_{i}^{2}(\gamma_{*})\sigma_{*}^{-2}=1+o_{P}(1)$.
Hence, 
\[
\mathbb{P}\left(3/4\leq n^{-1}\|U(\gamma_{*})\|_{2}^{2}\sigma_{*}^{-2}\leq2\right)\rightarrow1.
\]

Since $\mathbb{P}(n^{-1}\sum_{i=1}^{n}u_{i}^{2}(\gamma_{*})\sigma_{*}^{-2}\geq2)\rightarrow0$
and entries of $w_{i}$ have sub-Gaussian norms bounded above by some
constant $C_{1}>0$, we can apply Lemma \ref{lem: hoeffding plus union}
(with $x_{i,j}=w_{i,j}$ and $h_{i,j}=u_{i}(\gamma_{*})\sigma_{*}^{-1}$
for $1\leq j\leq p$). It follows that 
\[
\mathbb{P}\left(\|n^{-1}W^{\top}U(\gamma_{*})\sigma_{*}^{-1}\|_{\infty}>C_{2}\sqrt{n^{-1}\log p}\right)\rightarrow0,
\]
where $C_{2}>0$ is a constant depending only on $C_{1}$. To see
$\mathbb{P}(\|V\gamma_{*}+U\|_{\infty}\leq\mu\sigma_{*})\rightarrow1$,
first notice that, by Minkowski's inequality and the bounded ninth
moment of $u_{1}$, there exists a constant $C_{3}>0$ such that 
$$[\mathbb{E}|u_{1}(\gamma_{*})|^{9}]^{1/9}\leq[\mathbb{E}|v_{1}^{\top}\gamma_{*}|^{9}]^{1/9}+[\mathbb{E}|u_{1}|^{9}]^{1/9}\le C_{3}\left( \|\gamma_*\|_2 \vee 1 \right).$$
Therefore, there exists a constant $C_4>0$ such that 
\begin{align}
\mathbb{P}\left(|u_{1}(\gamma_{*})|>\mu\sigma_{*}\right) & =\mathbb{P}\left(|u_{1}(\gamma_{*})|^{9}>\mu^{9}\sigma_{*}^{9}\right) \nonumber
\\
&\overset{(i)}{\leq}\frac{\mathbb{E}|u_{1}(\gamma_{*})|^{9}}{\mu^{9}\sigma_{*}^{9}}\nonumber
\\
&\leq\frac{C_3^9\left( \|\gamma_*\|_{2}^9 \vee 1 \right)}{\sigma_{*}^9n\log^3 p}  \nonumber
\\
&\overset{(ii)}{\leq} C_4 n^{-1} \log^{-3} p ,\label{eq: sigma star nonGauss sparse alter eq 1}
\end{align}
where $(i)$ follows by Markov's inequality and $(ii)$ holds by the definition of $\sigma_*$  and the fact that eigenvalues of $\Sigma_V$ is bounded away from zero (due to Lemma \ref{lem:1}). Hence, 
\begin{align*}
\mathbb{P}\left(\|V\gamma_{*}+U\|_{\infty}\leq\mu\sigma_{*}\right) &=\mathbb{P}\left(\max_{1\leq i\leq n}|u_{i}(\gamma_{*})|\leq\mu\sigma_{*}\right)\\&=\prod_{i=1}^{n}\mathbb{P}\left(|u_{i}(\gamma_{*})|\leq\mu\sigma_{*}\right)\\
&=\left[\mathbb{P}\left(|u_{1}(\gamma_{*})|\leq\mu\sigma_{*}\right)\right]^{n}=\left[1-\mathbb{P}\left(|u_{1}(\gamma_{*})|>\mu\sigma_{*}\right)\right]^{n}
\\
&\overset{(i)}{\geq}\left[1-C_{4}n^{-1}\log^{-3}p\right]^{n}\overset{(ii)}{\geq}1+o(1),
\end{align*}
where $(i)$ holds by (\ref{eq: sigma star nonGauss sparse alter eq 1})
and $(ii)$ holds by $(1+n^{-1}a_{n})^{n}\rightarrow\exp(0)=1$ for
$a_{n}=o(1)$ (here $a_{n}=-C_{4}\log^{-3}p$).
The proof is complete. 
\end{proof}

\begin{proof}[\textbf{Proof of Theorem \ref{thm: power sparse nonGauss}}]
The proof is almost identical to that of Theorem \ref{thm: main result power sparse},
except the reasoning for (\ref{eq: power sparse eq 1}). We now apply
Theorem \ref{thm: ADDS} with $\mathcal{B}(\sigma)=\{b\in\mathbb{R}^{p}\mid\|Y-Wb\|_{\infty}\leq\mu\sigma\}$
instead of $\mathcal{B}(\sigma)=\mathbb{R}^{p}$. We follow the argument
at the beginning of the proof of Theorem \ref{thm: main result power sparse}
with Lemma \ref{lem: sigma_star sparse alter} replaced by Lemma \ref{lem: sigma_star sparse alter nonGauss}.
The rest of the proof is the same. 
\end{proof}

\subsection{Technical tools}
\begin{lem}
\label{lem: basic prob 1}Let $X$ and $Y$ be two random vectors.
Then $\forall t,\varepsilon>0$, $$\left|\mathbb{P}\left(\|X\|_{\infty}\leq t\right)-\mathbb{P}\left(\|Y\|_{\infty}\leq t\right)\right|\leq\mathbb{P}\left(\|X-Y\|_{\infty}>\varepsilon\right)+\mathbb{P}\left(\|Y\|_{\infty}\in(t-\varepsilon,t+\varepsilon]\right).$$\end{lem}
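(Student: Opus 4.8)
The plan is to prove the two one-sided inequalities separately via a triangle-inequality sandwiching argument and then combine them. The key observation is that the event $\{\|X\|_{\infty}\leq t\}$ can differ from $\{\|Y\|_{\infty}\leq t\}$ only when $X$ and $Y$ are far apart (which contributes the term $\mathbb{P}(\|X-Y\|_{\infty}>\varepsilon)$) or when $\|Y\|_{\infty}$ falls into a boundary strip of width $\varepsilon$ around $t$ (which contributes the anti-concentration term). No distributional assumptions are needed, so the whole argument is set-theoretic plus monotonicity of probability.

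First I would establish the inclusion $\{\|X\|_{\infty}\leq t\}\cap\{\|X-Y\|_{\infty}\leq\varepsilon\}\subseteq\{\|Y\|_{\infty}\leq t+\varepsilon\}$, which is immediate from the triangle inequality $\|Y\|_{\infty}\leq\|X\|_{\infty}+\|X-Y\|_{\infty}$. Taking complements and using subadditivity gives $\mathbb{P}(\|X\|_{\infty}\leq t)\leq\mathbb{P}(\|Y\|_{\infty}\leq t+\varepsilon)+\mathbb{P}(\|X-Y\|_{\infty}>\varepsilon)$. Subtracting $\mathbb{P}(\|Y\|_{\infty}\leq t)$ and using $\mathbb{P}(\|Y\|_{\infty}\leq t+\varepsilon)-\mathbb{P}(\|Y\|_{\infty}\leq t)=\mathbb{P}(\|Y\|_{\infty}\in(t,t+\varepsilon])$ yields $\mathbb{P}(\|X\|_{\infty}\leq t)-\mathbb{P}(\|Y\|_{\infty}\leq t)\leq\mathbb{P}(\|Y\|_{\infty}\in(t,t+\varepsilon])+\mathbb{P}(\|X-Y\|_{\infty}>\varepsilon)$, which is dominated by the right-hand side of the claim since $(t,t+\varepsilon]\subseteq(t-\varepsilon,t+\varepsilon]$.

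For the reverse direction I would symmetrically use $\{\|Y\|_{\infty}\leq t-\varepsilon\}\cap\{\|X-Y\|_{\infty}\leq\varepsilon\}\subseteq\{\|X\|_{\infty}\leq t\}$, again by the triangle inequality in the form $\|X\|_{\infty}\leq\|Y\|_{\infty}+\|X-Y\|_{\infty}$, to obtain $\mathbb{P}(\|Y\|_{\infty}\leq t-\varepsilon)\leq\mathbb{P}(\|X\|_{\infty}\leq t)+\mathbb{P}(\|X-Y\|_{\infty}>\varepsilon)$. Rearranging gives $\mathbb{P}(\|Y\|_{\infty}\leq t)-\mathbb{P}(\|X\|_{\infty}\leq t)\leq\mathbb{P}(\|Y\|_{\infty}\in(t-\varepsilon,t])+\mathbb{P}(\|X-Y\|_{\infty}>\varepsilon)$, which is again bounded by the right-hand side. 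Combining the two one-sided bounds controls the absolute value and completes the proof.

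This argument is entirely elementary and presents no genuine obstacle. The only points requiring care are the orientation of the triangle inequality in each of the two inclusions (which quantity is bounded by which) and the matching choice of shifted threshold, namely $t+\varepsilon$ in the first direction and $t-\varepsilon$ in the second, so that both boundary strips are absorbed into the single symmetric interval $(t-\varepsilon,t+\varepsilon]$ appearing in the statement.
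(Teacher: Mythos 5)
Your proof is correct and is essentially the same argument as the paper's: a triangle-inequality sandwich with thresholds shifted by $\pm\varepsilon$, with the two boundary strips $(t,t+\varepsilon]$ and $(t-\varepsilon,t]$ absorbed into the symmetric interval. The only cosmetic difference is that you phrase the inclusions in terms of the events $\{\|\cdot\|_\infty\leq t\}$ while the paper works with their complements $\{\|\cdot\|_\infty> t\}$ and converts at the end.
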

\begin{proof}[Proof of Lemma \ref{lem: basic prob 1}]
By the triangular inequality, we have 
\begin{align*}
\mathbb{P}(\|X\|_{\infty}>t)&\leq\mathbb{P}(\|X-Y\|_{\infty}>\varepsilon)+\mathbb{P}(\|Y\|_{\infty}>t-\varepsilon)
\\
&=\mathbb{P}(\|X-Y\|_{\infty}>\varepsilon)+\mathbb{P}(\|Y\|_{\infty}>t)+\mathbb{P}(\|Y\|_{\infty}\in(t-\varepsilon,t])
\end{align*}
 and 
\begin{align*}
\mathbb{P}(\|X\|_{\infty}>t)&\geq\mathbb{P}(\|Y\|_{\infty}>t+\varepsilon)-\mathbb{P}(\|X-Y\|_{\infty}>\varepsilon)
\\
&=\mathbb{P}(\|Y\|_{\infty}>t)-\mathbb{P}(\|Y\|_{\infty}\in(t,t+\varepsilon])-\mathbb{P}(\|X-Y\|_{\infty}>\varepsilon).
\end{align*}

The above two display imply that 
\[
\left|\mathbb{P}\left(\|X\|_{\infty}>t\right)-\mathbb{P}\left(\|Y\|_{\infty}>t\right)\right|\leq\mathbb{P}\left(\|X-Y\|_{\infty}>\varepsilon\right)+\mathbb{P}\left(\|Y\|_{\infty}\in(t-\varepsilon,t+\varepsilon]\right).
\]

The result follows by noticing that $$\left|\mathbb{P}\left(\|X\|_{\infty}>t\right)-\mathbb{P}\left(\|Y\|_{\infty}>t\right)\right|=\left|\mathbb{P}\left(\|X\|_{\infty}\leq t\right)-\mathbb{P}\left(\|Y\|_{\infty}\leq t\right)\right|.$$ \end{proof}
\begin{lem}
\label{lem:anti-concentration}Let $Y=(Y_{1},\cdots,Y_{p})^{\top}$
be a random vector and $\mathcal{F}$ a $\sigma$-algebra. If $\mathbb{E}(Y\mid\mathcal{F})=0$,
$Y\mid\mathcal{F}$ is Gaussian and $\min_{1\leq j \leq p}\mathbb{E}(Y_{j}^{2}\mid\mathcal{F})\geq b$
almost surely for some constant $b>0$, then there exists a constant $C>0$
depending only on $b$ such that $\forall\varepsilon>0$. 
\[
\sup_{x\in\mathbb{R}}\mathbb{P}\left(\|Y\|_{\infty}\in(x-\varepsilon,x+\varepsilon]\mid\mathcal{F}\right)\leq C\varepsilon\sqrt{\log p}\quad a.s.
\]
\end{lem}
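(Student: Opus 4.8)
The plan is to work throughout conditionally on $\mathcal{F}$, where by hypothesis $Y$ is a centered Gaussian vector, and to reduce the two-sided maximum $\|Y\|_{\infty}=\max_{1\le j\le p}|Y_{j}|$ to an ordinary maximum so that the standard anti-concentration inequality for Gaussian maxima applies. First I would introduce the doubled vector $\widetilde{Y}=(Y^{\top},-Y^{\top})^{\top}\in\mathbb{R}^{2p}$, exactly as in the proof of Theorem \ref{thm: approx assuming CLT}, and note that $\max_{1\le k\le 2p}\widetilde{Y}_{k}=\|Y\|_{\infty}$. Conditional on $\mathcal{F}$, the vector $\widetilde{Y}$ is again centered Gaussian, and each of its $2p$ coordinates has conditional variance equal to some $\mathbb{E}(Y_{j}^{2}\mid\mathcal{F})$, hence at least $b$ by assumption. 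Since $(x-\varepsilon,x+\varepsilon]\subseteq\{t:|t-x|\le\varepsilon\}$, it then suffices to bound $\sup_{x}\mathbb{P}(|\max_{1\le k\le 2p}\widetilde{Y}_{k}-x|\le\varepsilon\mid\mathcal{F})$.

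Next I would invoke the Gaussian anti-concentration inequality of \cite{chernozhukov2013gaussian}, applied to the regular conditional law of $\widetilde{Y}$ given $\mathcal{F}$ (which is centered Gaussian). Writing $\widetilde{\sigma}_{k}^{2}=\mathbb{E}(\widetilde{Y}_{k}^{2}\mid\mathcal{F})\ge b$, that inequality yields, almost surely,
\[
\sup_{x}\mathbb{P}\left(\left|\max_{1\le k\le 2p}\widetilde{Y}_{k}-x\right|\le\varepsilon\,\Big|\,\mathcal{F}\right)\le C_{b}\,\varepsilon\left(a_{2p}+1\right),
\]
where $a_{2p}=\mathbb{E}(\max_{1\le k\le 2p}\widetilde{Y}_{k}/\widetilde{\sigma}_{k}\mid\mathcal{F})$ and $C_{b}$ depends only on the lower variance bound $b$. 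The essential point is that both the constant and the multiplicative factor involve only $a_{2p}$ and $b$, and not any upper bound on the individual variances.

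It remains to control $a_{2p}$, and here the argument is robust: each normalized coordinate $\widetilde{Y}_{k}/\widetilde{\sigma}_{k}$ is, conditional on $\mathcal{F}$, a standard normal random variable, so the classical Gaussian maximal inequality gives $a_{2p}\le\sqrt{2\log(2p)}$ almost surely, irrespective of the dependence structure and of the magnitude of the variances. Combining the two displays, absorbing $\sqrt{2\log(2p)}+1\le C'\sqrt{\log p}$ for $p\ge2$ into a constant depending only on $b$, and recalling $\max_{k}\widetilde{Y}_{k}=\|Y\|_{\infty}$, delivers the asserted bound $\sup_{x}\mathbb{P}(\|Y\|_{\infty}\in(x-\varepsilon,x+\varepsilon]\mid\mathcal{F})\le C\varepsilon\sqrt{\log p}$ almost surely. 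The step requiring the most care is verifying that the anti-concentration constant depends only on $b$: this is precisely what makes the conclusion hold uniformly over realizations of $\mathcal{F}$ (and, in the applications, over the unknown covariance $Q$), and it is secured by the fact that after unit-variance normalization $a_{2p}$ is controlled by $\sqrt{2\log(2p)}$ with no reference to upper variance bounds, so that only $\min_{k}\widetilde{\sigma}_{k}\ge\sqrt{b}$ enters the final constant.
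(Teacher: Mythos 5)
Your reduction via the doubled vector $\widetilde{Y}=(Y^{\top},-Y^{\top})^{\top}$ and the bound $a_{2p}\leq\sqrt{2\log(2p)}$ are both fine, but the central step---the anti-concentration inequality you invoke---is not available in the form you state, and this is a genuine gap. The result of \cite{chernozhukov2013gaussian} (Corollary 1 there) gives $\sup_{x}\mathbb{P}\left(|\max_{k}\widetilde{Y}_{k}-x|\leq\varepsilon\right)\leq C\varepsilon\left\{a_{2p}+\sqrt{1\vee\log(\underline{\sigma}/\varepsilon)}\right\}$ in the unequal-variance case, where $C$ depends on \emph{both} the minimal and the maximal standard deviations; the clean form $C\varepsilon(a_{p}+1)$ with a constant controlled by the lower variance bound alone is only stated when all variances are equal. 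This matters twice over here: the lemma imposes no upper bound on $\mathbb{E}(Y_{j}^{2}\mid\mathcal{F})$, so a constant depending on $\bar{\sigma}$ cannot be absorbed into a $C$ depending only on $b$; and the extra term $\sqrt{1\vee\log(\underline{\sigma}/\varepsilon)}$ exceeds $\sqrt{\log p}$ for small $\varepsilon$, so the asserted bound ``$\leq C\varepsilon\sqrt{\log p}$ for all $\varepsilon>0$'' does not follow. Your emphasized claim that ``the constant and the multiplicative factor involve only $a_{2p}$ and $b$'' is precisely the point that fails for this inequality.

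The paper avoids this by using Nazarov's inequality (Lemma A.1 of \cite{chernozukov2014central}), which for a centered Gaussian vector with $\min_{j}\mathbb{E}(Y_{j}^{2}\mid\mathcal{F})\geq b$ bounds $\mathbb{P}(Y\leq y+a\mid\mathcal{F})-\mathbb{P}(Y\leq y\mid\mathcal{F})$ by $C_{b}\,a\sqrt{\log p}$ with $C_{b}$ depending only on $b$ and with no $\log(1/\varepsilon)$ term; it then applies this to $\max_{j}Y_{j}$ and $\max_{j}(-Y_{j})$ separately and combines them with a union bound, using $\|Y\|_{\infty}=\max\{\max_{j}Y_{j},\max_{j}(-Y_{j})\}$. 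Your doubling device would in fact combine cleanly with Nazarov's inequality (since $\{\|Y\|_{\infty}\leq t\}=\{\widetilde{Y}\leq t\mathbf{1}_{2p}\}$ is a rectangle event), giving the result in one application; so the fix is simply to replace the cited inequality, not to change the architecture of your argument.
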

\begin{proof}[Proof of Lemma \ref{lem:anti-concentration}]
By Nazarov's anti-concentration inequality (Lemma A.1 in \cite{chernozukov2014central}),
there exists a constant $C_{b}>0$ depending only on $b$ such that
almost surely,
 $$ \begin{cases}
 \sup_{x\in\mathbb{R}}\mathbb{P}(\max_{1\leq j\leq p}Y_{j}\in(x-\varepsilon,x+\varepsilon]\mid\mathcal{F})\leq2C_{b}\varepsilon\sqrt{\log p}\\
\sup_{x\in\mathbb{R}}\mathbb{P}(\max_{1\leq j\leq p}(-Y_{j})\in(x-\varepsilon,x+\varepsilon]\mid\mathcal{F})\leq2C_{b}\varepsilon\sqrt{\log p}.
\end{cases}$$

Since $\|Y\|_{\infty}=\max\{\max_{1\leq j\leq p}Y_{j},\max_{1\leq j\leq p}(-Y_{j})\}$,
the desired result follows by 
\begin{multline*}
\sup_{x\in\mathbb{R}}\mathbb{P}(\|Y\|_{\infty}\in(x-\varepsilon,x+\varepsilon]\mid\mathcal{F})\\
\leq\sup_{x\in\mathbb{R}}\mathbb{P}(\max_{1\leq j\leq p}Y_{j}\in(x-\varepsilon,x+\varepsilon]\mid\mathcal{F}) 
\\
+\sup_{x\in\mathbb{R}}\mathbb{P}(\max_{1\leq j\leq p}(-Y_{j})\in(x-\varepsilon,x+\varepsilon]\mid\mathcal{F})\leq4C_{b}\varepsilon\sqrt{\log p}.
\end{multline*}
 \end{proof}
\begin{lem}
\label{lem: hoeffding plus union}Let $\{x_{i}\}_{i=1}^{n}$ and $\{h_{i}\}_{i=1}^{n}$
be two sequences of random vectors in $\mathbb{R}^{L}$ that are independent
across $i$. Suppose that $\{x_{i}\}_{i=1}^{n}$ and $\{h_{i}\}_{i=1}^{n}$
are also independent and that there exist constants $K_{1},K_{2}>0$
such that $\forall1\leq l\leq L$, the sub-Gaussian norm of $x_{i,l}$
is bounded above by $K_{1}$ and $\mathbb{P}(\max_{1\leq l\leq L}n^{-1}\sum_{i=1}^{n}h_{i,l}^{2}>K_{2})\rightarrow0$.

Then, if $L\rightarrow\infty$, then there exists a constant $K>0$ depending
only on $K_{1},K_{2}$ such that 
\[
\mathbb{P}\left(\max_{1\leq l\leq L}\left|n^{-1/2}\sum_{i=1}^{n}x_{i,l}h_{i,l}\right|>K\sqrt{\log L}\right)\rightarrow0.
\]
\end{lem}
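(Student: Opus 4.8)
The plan is to condition on the sequence $\{h_i\}_{i=1}^n$ and exploit the conditional sub-Gaussianity of each coordinate sum, then combine a sub-Gaussian tail bound with a union bound over $l$. Fix $1\le l\le L$ and write $S_l = n^{-1/2}\sum_{i=1}^n x_{i,l}h_{i,l}$. Since $\{x_i\}_{i=1}^n$ is independent of $\{h_i\}_{i=1}^n$, conditional on $\{h_i\}_{i=1}^n$ the summands $x_{i,l}h_{i,l}$ are independent, centered (using $\mathbb{E}x_{i,l}=0$, as holds for the centered design coordinates in every invocation of this lemma), and sub-Gaussian with norm at most $K_1|h_{i,l}|$, because scaling a sub-Gaussian variable by a constant scales its norm by that constant. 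By the additivity of squared sub-Gaussian norms for independent centered variables (the Hoeffding-type inequality for sub-Gaussian sums in \cite{vershynin2010introduction}), $S_l$ is, conditionally on $\{h_i\}_{i=1}^n$, sub-Gaussian with variance proxy proportional to $K_1^2\,n^{-1}\sum_{i=1}^n h_{i,l}^2$. Writing $\sigma_l^2 := n^{-1}\sum_{i=1}^n h_{i,l}^2$, this yields a conditional tail bound
\[
\mathbb{P}\left(|S_l|>t \mid \{h_i\}_{i=1}^n\right)\le 2\exp\left(-\frac{t^2}{c\,K_1^2\sigma_l^2}\right)\qquad\forall t>0,
\]
for a universal constant $c>0$.

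Next I would control the random variance proxies $\sigma_l^2$ uniformly in $l$ by isolating the high-probability event $\mathcal{E}=\{\max_{1\le l\le L}\sigma_l^2\le K_2\}$, which satisfies $\mathbb{P}(\mathcal{E}^c)\to 0$ by hypothesis. Since $\mathcal{E}$ is $\{h_i\}_{i=1}^n$-measurable, splitting on $\mathcal{E}$ and applying the union bound gives
\[
\mathbb{P}\left(\max_{1\le l\le L}|S_l|>t\right)\le \mathbb{P}(\mathcal{E}^c)+\mathbb{E}\left[\mathbf{1}_{\mathcal{E}}\sum_{l=1}^L\mathbb{P}\left(|S_l|>t\mid\{h_i\}_{i=1}^n\right)\right]\le \mathbb{P}(\mathcal{E}^c)+2L\exp\left(-\frac{t^2}{c\,K_1^2 K_2}\right),
\]
where on $\mathcal{E}$ I replaced each $\sigma_l^2$ by its bound $K_2$.

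Finally, taking $t=K\sqrt{\log L}$ turns the second term into $2L^{\,1-K^2/(c K_1^2 K_2)}$, so choosing $K$ large enough that $K^2/(c K_1^2 K_2)>1$ — a threshold depending only on $K_1$ and $K_2$ — forces this term to vanish as $L\to\infty$; combined with $\mathbb{P}(\mathcal{E}^c)\to0$ this proves the claim. The only delicate point is the bookkeeping in the conditional argument: the variance proxy $\sigma_l^2$ is itself random, so I must first convert the unconditional probability into a conditional one, bound the latter on the event $\mathcal{E}$, and only then apply the union bound. The event $\mathcal{E}$ is precisely what decouples the randomness of $\{h_i\}_{i=1}^n$ from the sub-Gaussian concentration of $\{x_{i,l}\}$, and no step requires the coordinates $S_1,\dots,S_L$ to be independent or to possess any joint structure, which is what makes the maximal bound robust and directly usable in the preceding proofs.
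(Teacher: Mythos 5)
Your proposal is correct and follows essentially the same route as the paper's own proof: condition on $\{h_i\}_{i=1}^n$, apply the sub-Gaussian Hoeffding-type tail bound (Proposition 5.10 of \cite{vershynin2010introduction}) together with a union bound over $l$, restrict to the high-probability event $\{\max_l n^{-1}\sum_i h_{i,l}^2\le K_2\}$, and choose $K$ large enough that $L^{1-K^2/(cK_1^2K_2)}\to0$. Your explicit remark that the summands must be centered (which the lemma's statement leaves implicit but which holds in every invocation) is a fair point of bookkeeping, not a departure from the paper's argument.
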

\begin{proof}[Proof of Lemma \ref{lem: hoeffding plus union}]
Define the event $\mathcal{J}_{n}=\{\max_{1\leq l\leq L}n^{-1}\sum_{i=1}^{n}h_{i,l}^{2}\leq K_{2}\}$.
By the bounded sub-Gaussian norm of $x_{i,l}$, we have that, on the
event $\mathcal{J}_{n}$
\begin{align*}
&\mathbb{P}\left(\max_{1\leq l\leq L}\left|n^{-1/2}\sum_{i=1}^{n}x_{i,l}h_{i,l}\right|>t\mid\{h_{i}\}_{i=1}^{n}\right)
\\
&\leq\sum_{l=1}^{L}\mathbb{P}\left(\left|n^{-1/2}\sum_{i=1}^{n}x_{i,l}h_{i,l}\right|>t\mid\{h_{i}\}_{i=1}^{n}\right)\\
&\overset{(i)}{\leq}p\exp\left[1-\frac{ct^{2}}{K_{1}^{2}\max_{1\leq l\leq L}n^{-1}\sum_{i=1}^{n}h_{i,l}^{2}}\right]\overset{(ii)}{\leq}p\exp\left[1-\frac{ct^{2}}{K_{1}^{2}K_{2}}\right],
\end{align*}
where $c>0$ is a universal constant; $(i)$ holds by Proposition
5.10 of \cite{vershynin2010introduction} (applied to the conditional
probability measure). Letting $t=2K_{1}\sqrt{K_{2}c^{-1}\log L}$,
we have that 
\begin{align*}
& \mathbb{P}\left(\max_{1\leq l\leq L}\left|n^{-1/2}\sum_{i=1}^{n}x_{i,l}h_{i,l}\right|>2K_{1}\sqrt{K_{2}c^{-1}\log L}\right)
\\
&\qquad \qquad \leq\mathbb{P}\left(\mathcal{J}_{n}^{c}\right)+p\exp\left[1-\frac{c\left(2K_{1}\sqrt{K_{2}c^{-1}\log L}\right)^{2}}{K_{1}^{2}K_{2}}\right]\\
& \qquad \qquad =\mathbb{P}\left(\mathcal{J}_{n}^{c}\right)+e/L^{3}=o(1).
\end{align*}

Hence, the desired result holds with $K=2K_{1}\sqrt{K_{2}c^{-1}\log L}$. 
\end{proof}

     \bibliography{biblio_state_space}{}

\begin{thebibliography}{46}
% BibTex style file: imsart-nameyear.bst, 2013-01-28
% Default style options (sort=1,type=nameyear).
% Used options (sort=1,type=nameyear).

\bibitem[\protect\citeauthoryear{Allan et~al.}{2001}]{Allan25092001}
\begin{barticle}[author]
\bauthor{\bsnm{Allan},~\bfnm{James~M.}\binits{J.~M.}},
  \bauthor{\bsnm{Wild},~\bfnm{Christopher~P.}\binits{C.~P.}},
  \bauthor{\bsnm{Rollinson},~\bfnm{Sara}\binits{S.}},
  \bauthor{\bsnm{Willett},~\bfnm{Eleanor~V.}\binits{E.~V.}},
  \bauthor{\bsnm{Moorman},~\bfnm{Anthony~V.}\binits{A.~V.}},
  \bauthor{\bsnm{Dovey},~\bfnm{Gareth~J.}\binits{G.~J.}},
  \bauthor{\bsnm{Roddam},~\bfnm{Philippa~L.}\binits{P.~L.}},
  \bauthor{\bsnm{Roman},~\bfnm{Eve}\binits{E.}},
  \bauthor{\bsnm{Cartwright},~\bfnm{Raymond~A.}\binits{R.~A.}} \AND
  \bauthor{\bsnm{Morgan},~\bfnm{Gareth~J.}\binits{G.~J.}}
(\byear{2001}).
\btitle{Polymorphism in glutathione S-transferase P1 is associated with
  susceptibility to chemotherapy-induced leukemia}.
\bjournal{Proceedings of the National Academy of Sciences}
\bvolume{98}
\bpages{11592-11597}.
\bdoi{10.1073/pnas.191211198}
\end{barticle}
\endbibitem

\bibitem[\protect\citeauthoryear{Athreya and Lahiri}{2006}]{athreya2006measure}
\begin{bbook}[author]
\bauthor{\bsnm{Athreya},~\bfnm{Krishna~B}\binits{K.~B.}} \AND
  \bauthor{\bsnm{Lahiri},~\bfnm{Soumendra~N}\binits{S.~N.}}
(\byear{2006}).
\btitle{Measure Theory and Probability Theory}.
\bpublisher{Springer Science \& Business Media}.
\end{bbook}
\endbibitem

\bibitem[\protect\citeauthoryear{Bai and Saranadasa}{1996}]{bai1996effect}
\begin{barticle}[author]
\bauthor{\bsnm{Bai},~\bfnm{Z.~D.}\binits{Z.~D.}} \AND
  \bauthor{\bsnm{Saranadasa},~\bfnm{Hewa}\binits{H.}}
(\byear{1996}).
\btitle{{Effect of high dimension: by an example of a two sample problem}}.
\bjournal{Statistica Sinica}
\bvolume{6}
\bpages{311--329}.
\end{barticle}
\endbibitem

\bibitem[\protect\citeauthoryear{Belloni, Chernozhukov and
  Wang}{2011}]{belloni2011square}
\begin{barticle}[author]
\bauthor{\bsnm{Belloni},~\bfnm{Alexandre}\binits{A.}},
  \bauthor{\bsnm{Chernozhukov},~\bfnm{Victor}\binits{V.}} \AND
  \bauthor{\bsnm{Wang},~\bfnm{Lie}\binits{L.}}
(\byear{2011}).
\btitle{Square-root lasso: pivotal recovery of sparse signals via conic
  programming}.
\bjournal{Biometrika}
\bvolume{98}
\bpages{791--806}.
\end{barticle}
\endbibitem

\bibitem[\protect\citeauthoryear{Bickel, Ritov and
  Tsybakov}{2009}]{bickel2009simultaneous}
\begin{barticle}[author]
\bauthor{\bsnm{Bickel},~\bfnm{Peter~J}\binits{P.~J.}},
  \bauthor{\bsnm{Ritov},~\bfnm{Ya\'acov}\binits{Y.}} \AND
  \bauthor{\bsnm{Tsybakov},~\bfnm{Alexandre~B}\binits{A.~B.}}
(\byear{2009}).
\btitle{Simultaneous analysis of Lasso and Dantzig selector}.
\bjournal{The Annals of Statistics}
\bvolume{37}
\bpages{1705--1732}.
\end{barticle}
\endbibitem

\bibitem[\protect\citeauthoryear{B{\"u}hlmann}{2013}]{buhlmann2013statistical}
\begin{barticle}[author]
\bauthor{\bsnm{B{\"u}hlmann},~\bfnm{P.}\binits{P.}}
(\byear{2013}).
\btitle{Statistical significance in high-dimensional linear models}.
\bjournal{Bernoulli}
\bvolume{19}
\bpages{1212--1242}.
\end{barticle}
\endbibitem

\bibitem[\protect\citeauthoryear{Cai and Guo}{2015}]{cai2015confidence}
\begin{barticle}[author]
\bauthor{\bsnm{Cai},~\bfnm{T~Tony}\binits{T.~T.}} \AND
  \bauthor{\bsnm{Guo},~\bfnm{Zijian}\binits{Z.}}
(\byear{2015}).
\btitle{Confidence intervals for high-dimensional linear regression: Minimax
  rates and adaptivity}.
\bjournal{arXiv preprint arXiv:1506.05539}.
\end{barticle}
\endbibitem

\bibitem[\protect\citeauthoryear{Cai and Liu}{2011}]{cai2012direct}
\begin{barticle}[author]
\bauthor{\bsnm{Cai},~\bfnm{Tony}\binits{T.}} \AND
  \bauthor{\bsnm{Liu},~\bfnm{Weidong}\binits{W.}}
(\byear{2011}).
\btitle{A Direct Estimation Approach to Sparse Linear Discriminant Analysis}.
\bjournal{Journal of the American Statistical Association}
\bvolume{106}
\bpages{1566--1577}.
\end{barticle}
\endbibitem

\bibitem[\protect\citeauthoryear{Cai, Liu and Xia}{2013}]{Cai2013a}
\begin{barticle}[author]
\bauthor{\bsnm{Cai},~\bfnm{Tony}\binits{T.}},
  \bauthor{\bsnm{Liu},~\bfnm{Weidong}\binits{W.}} \AND
  \bauthor{\bsnm{Xia},~\bfnm{Yin}\binits{Y.}}
(\byear{2013}).
\btitle{Two-Sample Covariance Matrix Testing and Support Recovery in
  High-Dimensional and Sparse Settings}.
\bjournal{Journal of the American Statistical Association}
\bvolume{108}
\bpages{265-277}.
\end{barticle}
\endbibitem

\bibitem[\protect\citeauthoryear{Cai, Liu and Xia}{2014}]{Cai2014}
\begin{barticle}[author]
\bauthor{\bsnm{Cai},~\bfnm{Tony}\binits{T.}},
  \bauthor{\bsnm{Liu},~\bfnm{Weidong}\binits{W.}} \AND
  \bauthor{\bsnm{Xia},~\bfnm{Yin}\binits{Y.}}
(\byear{2014}).
\btitle{Two-sample test of high dimensional means under dependence}.
\bjournal{Journal of the Royal Statistical Society: Series B (Statistical
  Methodology)}
\bvolume{76}
\bpages{349--372}.
\bdoi{10.1111/rssb.12034}
\end{barticle}
\endbibitem

\bibitem[\protect\citeauthoryear{Candes and Tao}{2007}]{candes2007dantzig}
\begin{barticle}[author]
\bauthor{\bsnm{Candes},~\bfnm{Emmanuel}\binits{E.}} \AND
  \bauthor{\bsnm{Tao},~\bfnm{Terence}\binits{T.}}
(\byear{2007}).
\btitle{The Dantzig selector: statistical estimation when p is much larger than
  n}.
\bjournal{The Annals of Statistics}
\bvolume{35}
\bpages{2313--2351}.
\end{barticle}
\endbibitem

\bibitem[\protect\citeauthoryear{Charbonnier, Verzelen and
  Villers}{2015}]{charbonnier2015}
\begin{barticle}[author]
\bauthor{\bsnm{Charbonnier},~\bfnm{Camille}\binits{C.}},
  \bauthor{\bsnm{Verzelen},~\bfnm{Nicolas}\binits{N.}} \AND
  \bauthor{\bsnm{Villers},~\bfnm{Fanny}\binits{F.}}
(\byear{2015}).
\btitle{A global homogeneity test for high-dimensional linear regression}.
\bjournal{Electronic Journal of Statistics}
\bvolume{9}
\bpages{318--382}.
\bdoi{10.1214/15-EJS999}
\end{barticle}
\endbibitem

\bibitem[\protect\citeauthoryear{Chen and Qin}{2010}]{chen2010}
\begin{barticle}[author]
\bauthor{\bsnm{Chen},~\bfnm{Song~Xi}\binits{S.~X.}} \AND
  \bauthor{\bsnm{Qin},~\bfnm{Ying-Li}\binits{Y.-L.}}
(\byear{2010}).
\btitle{A two-sample test for high-dimensional data with applications to
  gene-set testing}.
\bjournal{The Annals of Statistics}
\bvolume{38}
\bpages{808--835}.
\bdoi{10.1214/09-AOS716}
\end{barticle}
\endbibitem

\bibitem[\protect\citeauthoryear{Chernozhukov, Chetverikov and
  Kato}{2013}]{chernozhukov2013gaussian}
\begin{barticle}[author]
\bauthor{\bsnm{Chernozhukov},~\bfnm{Victor}\binits{V.}},
  \bauthor{\bsnm{Chetverikov},~\bfnm{Denis}\binits{D.}} \AND
  \bauthor{\bsnm{Kato},~\bfnm{Kengo}\binits{K.}}
(\byear{2013}).
\btitle{Gaussian approximations and multiplier bootstrap for maxima of sums of
  high-dimensional random vectors}.
\bjournal{The Annals of Statistics}
\bvolume{41}
\bpages{2786--2819}.
\end{barticle}
\endbibitem

\bibitem[\protect\citeauthoryear{Chernozhukov, Chetverikov and
  Kato}{2014}]{chernozukov2014central}
\begin{barticle}[author]
\bauthor{\bsnm{Chernozhukov},~\bfnm{Victor}\binits{V.}},
  \bauthor{\bsnm{Chetverikov},~\bfnm{Denis}\binits{D.}} \AND
  \bauthor{\bsnm{Kato},~\bfnm{Kengo}\binits{K.}}
(\byear{2014}).
\btitle{Central limit theorems and bootstrap in high dimensions}.
\bjournal{arXiv preprint arXiv:1412.3661}.
\end{barticle}
\endbibitem

\bibitem[\protect\citeauthoryear{de~la Pena, Klass and
  Leung~Lai}{2004}]{delapena2004}
\begin{barticle}[author]
\bauthor{\bparticle{de~la} \bsnm{Pena},~\bfnm{Victor~H.}\binits{V.~H.}},
  \bauthor{\bsnm{Klass},~\bfnm{Michael~J.}\binits{M.~J.}} \AND
  \bauthor{\bsnm{Leung~Lai},~\bfnm{Tze}\binits{T.}}
(\byear{2004}).
\btitle{Self-normalized processes: exponential inequalities, moment bounds and
  iterated logarithm laws}.
\bjournal{The Annals of Probability}
\bvolume{32}
\bpages{1902--1933}.
\bdoi{10.1214/009117904000000397}
\end{barticle}
\endbibitem

\bibitem[\protect\citeauthoryear{Dezeure, B{\"u}hlmann and
  Zhang}{2016}]{Dezeure16}
\begin{barticle}[author]
\bauthor{\bsnm{Dezeure},~\bfnm{Ruben}\binits{R.}},
  \bauthor{\bsnm{B{\"u}hlmann},~\bfnm{Peter}\binits{P.}} \AND
  \bauthor{\bsnm{Zhang},~\bfnm{Cun-Hui}\binits{C.-H.}}
(\byear{2016}).
\btitle{High-dimensional simultaneous inference with the bootstrap}.
\bjournal{arXiv preprint arXiv:1606.03940}.
\end{barticle}
\endbibitem

\bibitem[\protect\citeauthoryear{Dezeure et~al.}{2015}]{dezeure2015}
\begin{barticle}[author]
\bauthor{\bsnm{Dezeure},~\bfnm{Ruben}\binits{R.}},
  \bauthor{\bsnm{B{\"{u}}hlmann},~\bfnm{Peter}\binits{P.}},
  \bauthor{\bsnm{Meier},~\bfnm{Lukas}\binits{L.}} \AND
  \bauthor{\bsnm{Meinshausen},~\bfnm{Nicolai}\binits{N.}}
(\byear{2015}).
\btitle{High-Dimensional Inference: Confidence Intervals, p-Values and
  R-Software hdi}.
\bjournal{Statistical Science}
\bvolume{30}
\bpages{533--558}.
\end{barticle}
\endbibitem

\bibitem[\protect\citeauthoryear{Fan and Li}{2001}]{fan2001variable}
\begin{barticle}[author]
\bauthor{\bsnm{Fan},~\bfnm{Jianqing}\binits{J.}} \AND
  \bauthor{\bsnm{Li},~\bfnm{Runze}\binits{R.}}
(\byear{2001}).
\btitle{Variable selection via nonconcave penalized likelihood and its oracle
  properties}.
\bjournal{Journal of the American statistical Association}
\bvolume{96}
\bpages{1348--1360}.
\end{barticle}
\endbibitem

\bibitem[\protect\citeauthoryear{Fan and Lv}{2011}]{FanLv11}
\begin{barticle}[author]
\bauthor{\bsnm{Fan},~\bfnm{Jianqing}\binits{J.}} \AND
  \bauthor{\bsnm{Lv},~\bfnm{Jinchi}\binits{J.}}
(\byear{2011}).
\btitle{{Nonconcave Penalized Likelihood With NP-Dimensionality}}.
\bjournal{IEEE Transactions on Information Theory}
\bvolume{57}
\bpages{5467--5484}.
\end{barticle}
\endbibitem

\bibitem[\protect\citeauthoryear{Fan and Peng}{2004}]{fan2004}
\begin{barticle}[author]
\bauthor{\bsnm{Fan},~\bfnm{Jianqing}\binits{J.}} \AND
  \bauthor{\bsnm{Peng},~\bfnm{Heng}\binits{H.}}
(\byear{2004}).
\btitle{Nonconcave penalized likelihood with a diverging number of parameters}.
\bjournal{The Annals of Statistics}
\bvolume{32}
\bpages{928--961}.
\bdoi{10.1214/009053604000000256}
\end{barticle}
\endbibitem

\bibitem[\protect\citeauthoryear{Gautier and
  Tsybakov}{2013}]{gautier2013pivotal}
\begin{bincollection}[author]
\bauthor{\bsnm{Gautier},~\bfnm{Eric}\binits{E.}} \AND
  \bauthor{\bsnm{Tsybakov},~\bfnm{Alexandre~B}\binits{A.~B.}}
(\byear{2013}).
\btitle{Pivotal estimation in high-dimensional regression via linear
  programming}.
In \bbooktitle{Empirical Inference}
\bpages{195--204}.
\bpublisher{Springer}.
\end{bincollection}
\endbibitem

\bibitem[\protect\citeauthoryear{Gut}{2012}]{gut2013probability}
\begin{bbook}[author]
\bauthor{\bsnm{Gut},~\bfnm{Allan}\binits{A.}}
(\byear{2012}).
\btitle{Probability: a graduate course}
\bvolume{75}.
\bpublisher{Springer Science \& Business Media}.
\end{bbook}
\endbibitem

\bibitem[\protect\citeauthoryear{James, Radchenko and
  Lv}{2009}]{james2009dasso}
\begin{barticle}[author]
\bauthor{\bsnm{James},~\bfnm{Gareth~M}\binits{G.~M.}},
  \bauthor{\bsnm{Radchenko},~\bfnm{Peter}\binits{P.}} \AND
  \bauthor{\bsnm{Lv},~\bfnm{Jinchi}\binits{J.}}
(\byear{2009}).
\btitle{DASSO: connections between the Dantzig selector and lasso}.
\bjournal{Journal of the Royal Statistical Society: Series B (Statistical
  Methodology)}
\bvolume{71}
\bpages{127--142}.
\end{barticle}
\endbibitem

\bibitem[\protect\citeauthoryear{Javanmard and
  Montanari}{2014}]{javanmard2014confidence}
\begin{barticle}[author]
\bauthor{\bsnm{Javanmard},~\bfnm{Adel}\binits{A.}} \AND
  \bauthor{\bsnm{Montanari},~\bfnm{Andrea}\binits{A.}}
(\byear{2014}).
\btitle{Confidence intervals and hypothesis testing for high-dimensional
  regression}.
\bjournal{The Journal of Machine Learning Research}
\bvolume{15}
\bpages{2869--2909}.
\end{barticle}
\endbibitem

\bibitem[\protect\citeauthoryear{Javanmard and
  Montanari}{2015}]{2015arXiv150802757J}
\begin{barticle}[author]
\bauthor{\bsnm{Javanmard},~\bfnm{Adel}\binits{A.}} \AND
  \bauthor{\bsnm{Montanari},~\bfnm{Andrea}\binits{A.}}
(\byear{2015}).
\btitle{De-biasing the Lasso: Optimal Sample Size for Gaussian Designs}.
\bjournal{arXiv preprint arXiv:1508.02757}.
\end{barticle}
\endbibitem

\bibitem[\protect\citeauthoryear{Lehmann and Romano}{2006}]{lehmann2006testing}
\begin{bbook}[author]
\bauthor{\bsnm{Lehmann},~\bfnm{Erich~L}\binits{E.~L.}} \AND
  \bauthor{\bsnm{Romano},~\bfnm{Joseph~P}\binits{J.~P.}}
(\byear{2006}).
\btitle{Testing statistical hypotheses}.
\bpublisher{Springer Science \& Business Media}.
\end{bbook}
\endbibitem

\bibitem[\protect\citeauthoryear{Li and Chen}{2012}]{li2012}
\begin{barticle}[author]
\bauthor{\bsnm{Li},~\bfnm{Jun}\binits{J.}} \AND
  \bauthor{\bsnm{Chen},~\bfnm{Song~Xi}\binits{S.~X.}}
(\byear{2012}).
\btitle{Two sample tests for high-dimensional covariance matrices}.
\bjournal{The Annals of Statistics}
\bvolume{40}
\bpages{908--940}.
\bdoi{10.1214/12-AOS993}
\end{barticle}
\endbibitem

\bibitem[\protect\citeauthoryear{Lopes, Jacob and
  Wainwright}{2011}]{NIPS2011_4260}
\begin{bincollection}[author]
\bauthor{\bsnm{Lopes},~\bfnm{Miles}\binits{M.}},
  \bauthor{\bsnm{Jacob},~\bfnm{Laurent}\binits{L.}} \AND
  \bauthor{\bsnm{Wainwright},~\bfnm{Martin~J}\binits{M.~J.}}
(\byear{2011}).
\btitle{A More Powerful Two-Sample Test in High Dimensions using Random
  Projection}.
In \bbooktitle{Advances in Neural Information Processing Systems 24}
(\beditor{\bfnm{J.}\binits{J.}~\bsnm{Shawe-Taylor}},
  \beditor{\bfnm{R.~S.}\binits{R.~S.}~\bsnm{Zemel}},
  \beditor{\bfnm{P.~L.}\binits{P.~L.}~\bsnm{Bartlett}},
  \beditor{\bfnm{F.}\binits{F.}~\bsnm{Pereira}} \AND
  \beditor{\bfnm{K.~Q.}\binits{K.~Q.}~\bsnm{Weinberger}}, eds.)
\bpages{1206--1214}.
\bpublisher{Curran Associates, Inc.}
\end{bincollection}
\endbibitem

\bibitem[\protect\citeauthoryear{Mandozzi and
  B{\"u}hlmann}{2016}]{mandozzi2015hierarchical}
\begin{barticle}[author]
\bauthor{\bsnm{Mandozzi},~\bfnm{Jacopo}\binits{J.}} \AND
  \bauthor{\bsnm{B{\"u}hlmann},~\bfnm{Peter}\binits{P.}}
(\byear{2016}).
\btitle{Hierarchical testing in the high-dimensional setting with correlated
  variables}.
\bjournal{Journal of the American Statistical Association}
\bvolume{11}
\bpages{331--343}.
\end{barticle}
\endbibitem

\bibitem[\protect\citeauthoryear{Meinshausen and
  B{\"u}hlmann}{2006}]{meinshausen2006high}
\begin{barticle}[author]
\bauthor{\bsnm{Meinshausen},~\bfnm{Nicolai}\binits{N.}} \AND
  \bauthor{\bsnm{B{\"u}hlmann},~\bfnm{Peter}\binits{P.}}
(\byear{2006}).
\btitle{High-dimensional graphs and variable selection with the lasso}.
\bjournal{The Annals of Statistics}
\bvolume{34}
\bpages{1436--1462}.
\end{barticle}
\endbibitem

\bibitem[\protect\citeauthoryear{Meinshausen, Meier and
  B{\"u}hlmann}{2009}]{MMB09}
\begin{barticle}[author]
\bauthor{\bsnm{Meinshausen},~\bfnm{Nicolai}\binits{N.}},
  \bauthor{\bsnm{Meier},~\bfnm{Lukas}\binits{L.}} \AND
  \bauthor{\bsnm{B{\"u}hlmann},~\bfnm{Peter}\binits{P.}}
(\byear{2009}).
\btitle{p-Values for High-Dimensional Regression}.
\bjournal{Journal of the American Statistical Association}
\bvolume{104}
\bpages{1671--1681}.
\end{barticle}
\endbibitem

\bibitem[\protect\citeauthoryear{Pang, Liu and
  Vanderbei}{2014}]{pang2014fastclime}
\begin{barticle}[author]
\bauthor{\bsnm{Pang},~\bfnm{Haotian}\binits{H.}},
  \bauthor{\bsnm{Liu},~\bfnm{Han}\binits{H.}} \AND
  \bauthor{\bsnm{Vanderbei},~\bfnm{Robert~J}\binits{R.~J.}}
(\byear{2014}).
\btitle{The fastclime package for linear programming and large-scale precision
  matrix estimation in R.}
\bjournal{Journal of Machine Learning Research}
\bvolume{15}
\bpages{489--493}.
\end{barticle}
\endbibitem

\bibitem[\protect\citeauthoryear{Ren et~al.}{2015}]{ren2015asymptotic}
\begin{barticle}[author]
\bauthor{\bsnm{Ren},~\bfnm{Zhao}\binits{Z.}},
  \bauthor{\bsnm{Sun},~\bfnm{Tingni}\binits{T.}},
  \bauthor{\bsnm{Zhang},~\bfnm{Cun-Hui}\binits{C.-H.}} \AND
  \bauthor{\bsnm{Zhou},~\bfnm{Harrison~H}\binits{H.~H.}}
(\byear{2015}).
\btitle{Asymptotic normality and optimalities in estimation of large Gaussian
  graphical models}.
\bjournal{The Annals of Statistics}
\bvolume{43}
\bpages{991--1026}.
\end{barticle}
\endbibitem

\bibitem[\protect\citeauthoryear{Rudelson and
  Zhou}{2013}]{rudelson2013reconstruction}
\begin{barticle}[author]
\bauthor{\bsnm{Rudelson},~\bfnm{Mark}\binits{M.}} \AND
  \bauthor{\bsnm{Zhou},~\bfnm{Shuheng}\binits{S.}}
(\byear{2013}).
\btitle{Reconstruction from anisotropic random measurements}.
\bjournal{Information Theory, IEEE Transactions on}
\bvolume{59}
\bpages{3434--3447}.
\end{barticle}
\endbibitem

\bibitem[\protect\citeauthoryear{St{\"a}dler and
  Mukherjee}{2016}]{stadler2016two}
\begin{barticle}[author]
\bauthor{\bsnm{St{\"a}dler},~\bfnm{Nicolas}\binits{N.}} \AND
  \bauthor{\bsnm{Mukherjee},~\bfnm{Sach}\binits{S.}}
(\byear{2016}).
\btitle{Two-sample testing in high dimensions}.
\bjournal{Journal of the Royal Statistical Society: Series B (Statistical
  Methodology)}.
\end{barticle}
\endbibitem

\bibitem[\protect\citeauthoryear{Sun and Zhang}{2012}]{sun2012scaled}
\begin{barticle}[author]
\bauthor{\bsnm{Sun},~\bfnm{Tingni}\binits{T.}} \AND
  \bauthor{\bsnm{Zhang},~\bfnm{Cun-Hui}\binits{C.-H.}}
(\byear{2012}).
\btitle{Scaled sparse linear regression}.
\bjournal{Biometrika}
\bvolume{99}
\bpages{879--898}.
\end{barticle}
\endbibitem

\bibitem[\protect\citeauthoryear{Van~de Geer
  et~al.}{2014}]{van2014asymptotically}
\begin{barticle}[author]
\bauthor{\bparticle{Van~de} \bsnm{Geer},~\bfnm{Sara}\binits{S.}},
  \bauthor{\bsnm{B{\"u}hlmann},~\bfnm{Peter}\binits{P.}},
  \bauthor{\bsnm{Ritov},~\bfnm{Yaacov}\binits{Y.}},
  \bauthor{\bsnm{Dezeure},~\bfnm{Ruben}\binits{R.}} \betal{et~al.}
(\byear{2014}).
\btitle{On asymptotically optimal confidence regions and tests for
  high-dimensional models}.
\bjournal{The Annals of Statistics}
\bvolume{42}
\bpages{1166--1202}.
\end{barticle}
\endbibitem

\bibitem[\protect\citeauthoryear{Vanderbei}{2014}]{vanderbei2015linear}
\begin{bbook}[author]
\bauthor{\bsnm{Vanderbei},~\bfnm{Robert~J}\binits{R.~J.}}
(\byear{2014}).
\btitle{Linear Programming: Foundations and Extensions}.
\bpublisher{Springer}.
\end{bbook}
\endbibitem

\bibitem[\protect\citeauthoryear{Vershynin}{2010}]{vershynin2010introduction}
\begin{barticle}[author]
\bauthor{\bsnm{Vershynin},~\bfnm{Roman}\binits{R.}}
(\byear{2010}).
\btitle{Introduction to the non-asymptotic analysis of random matrices}.
\bjournal{arXiv preprint arXiv:1011.3027}.
\end{barticle}
\endbibitem

\bibitem[\protect\citeauthoryear{Wasserman and Roeder}{2009}]{wasserman2009}
\begin{barticle}[author]
\bauthor{\bsnm{Wasserman},~\bfnm{Larry}\binits{L.}} \AND
  \bauthor{\bsnm{Roeder},~\bfnm{Kathryn}\binits{K.}}
(\byear{2009}).
\btitle{High-dimensional variable selection}.
\bjournal{The Annals of Statistics}
\bvolume{37}
\bpages{2178--2201}.
\bdoi{10.1214/08-AOS646}
\end{barticle}
\endbibitem

\bibitem[\protect\citeauthoryear{Zhang and Cheng}{2016}]{Guang16}
\begin{barticle}[author]
\bauthor{\bsnm{Zhang},~\bfnm{Xianyang}\binits{X.}} \AND
  \bauthor{\bsnm{Cheng},~\bfnm{Guang}\binits{G.}}
(\byear{2016}).
\btitle{Simultaneous Inference for High-dimensional Linear Models}.
\bjournal{Journal of the American Statistical Association}
\bvolume{just-accepted}.
\end{barticle}
\endbibitem

\bibitem[\protect\citeauthoryear{Zhang and Huang}{2008}]{zhang2008}
\begin{barticle}[author]
\bauthor{\bsnm{Zhang},~\bfnm{Cun-Hui}\binits{C.-H.}} \AND
  \bauthor{\bsnm{Huang},~\bfnm{Jian}\binits{J.}}
(\byear{2008}).
\btitle{The sparsity and bias of the Lasso selection in high-dimensional linear
  regression}.
\bjournal{The Annals of Statistics}
\bvolume{36}
\bpages{1567--1594}.
\bdoi{10.1214/07-AOS520}
\end{barticle}
\endbibitem

\bibitem[\protect\citeauthoryear{Zhang and Zhang}{2014}]{zhang2014confidence}
\begin{barticle}[author]
\bauthor{\bsnm{Zhang},~\bfnm{Cun-Hui}\binits{C.-H.}} \AND
  \bauthor{\bsnm{Zhang},~\bfnm{Stephanie~S}\binits{S.~S.}}
(\byear{2014}).
\btitle{Confidence intervals for low dimensional parameters in high dimensional
  linear models}.
\bjournal{Journal of the Royal Statistical Society: Series B (Statistical
  Methodology)}
\bvolume{76}
\bpages{217--242}.
\end{barticle}
\endbibitem

\bibitem[\protect\citeauthoryear{Zhao and Yu}{2006}]{Zhao:2006}
\begin{barticle}[author]
\bauthor{\bsnm{Zhao},~\bfnm{Peng}\binits{P.}} \AND
  \bauthor{\bsnm{Yu},~\bfnm{Bin}\binits{B.}}
(\byear{2006}).
\btitle{On Model Selection Consistency of Lasso}.
\bjournal{Journal of Machine Learning Research}
\bvolume{7}
\bpages{2541--2563}.
\end{barticle}
\endbibitem

\bibitem[\protect\citeauthoryear{Zhu et~al.}{2016}]{2016arXiv160600252Z}
\begin{barticle}[author]
\bauthor{\bsnm{Zhu},~\bfnm{Lingxue}\binits{L.}},
  \bauthor{\bsnm{Lei},~\bfnm{Jing}\binits{J.}},
  \bauthor{\bsnm{Devlin},~\bfnm{Bernie}\binits{B.}} \AND
  \bauthor{\bsnm{Roeder},~\bfnm{Kathryn}\binits{K.}}
(\byear{2016}).
\btitle{Testing High Dimensional Differential Matrices, with Application to
  Detecting Schizophrenia Risk Genes}.
\bjournal{arXiv preprint arXiv:1606.00252}.
\end{barticle}
\endbibitem

\end{thebibliography}
  \bibliographystyle{imsart-nameyear}

%\Addresses
\end{document}